\documentclass[a4paper,oneside,10pt]{amsart}

\usepackage{a4wide}
\usepackage[T1]{fontenc}
\usepackage[ansinew]{inputenc}
\usepackage{lmodern} 
\usepackage{graphicx}
\usepackage{amsmath}
\usepackage{amsthm}
\usepackage{amsfonts}
\usepackage{amssymb}
\usepackage{setspace}
\usepackage{mathrsfs}
\usepackage[all]{xy}
\usepackage{enumerate}
\usepackage{xcolor}

\theoremstyle{plain}

\newtheorem{theorem}{Theorem}[section]

\newtheorem{corollary}[theorem]{Corollary}
\newtheorem{proposition}[theorem]{Proposition}
 \newtheorem{lemma}[theorem]{Lemma}

\theoremstyle{definition}
\newtheorem{remark}[theorem]{Remark}

\newtheorem{assumption}[theorem]{Assumption}

 \newtheorem{definition}[theorem]{Definition}

\newtheorem*{theorem*}{Theorem}
\newtheorem*{proposition*}{Proposition}
\newtheorem*{definition*}{Definition}
\numberwithin{equation}{section}

\def\chookrightarrow{\hookrightarrow}
\def\dprod#1#2{\langle #1,#2\rangle}
\def\Dprod#1#2{\Big\langle #1,#2\Bigr\rangle}

\def\semis{\mathcal{P}}
\def\xX{\underline{X}}

\def\TT{\overline{T}}

\def\dom{D}
\def\inj{\hookrightarrow}
\def\dd{\mathrm{d}}
\def\ee{\mathrm{e}}
\def\Ell{\mathrm{L}}
\def\tlim{\mathop{\tau\mathrm{lim}}}
\def\veps{\varepsilon}
\def\LLL{\mathscr{L}}
\def\Fav{F}
\def\rest{|}

\def\XHol_#1{\underline{X}_{#1}}
\def\A{\mathcal{A}}
\def\D{\mathscr{E}}
\def\C{\mathrm{C}}
\def\W{\mathrm{W}}
\def\h{\mathrm{h}}
\def\Distr{\mathscr{D}'}
\def\downto{\downarrow}
\def\Id{I}

\def\TFav_#1{F_{#1}}
\def\THol_#1{\underline{X}_{#1}}

\def\conti{\mathrm{cont}}

\newenvironment{abc}{\begin{enumerate}[{\rm (a)}]}{\end{enumerate}}
\newenvironment{num}{\begin{enumerate}[{\rm 1.}]}{\end{enumerate}}
\newenvironment{iiv}{\begin{enumerate}[{\rm (i)}]}{\end{enumerate}}

\begin{document}
\title{Intermediate and Extrapolated Spaces for Bi-Continuous Semigroups} 
\author{Christian Budde}
\address{University of Wuppertal, School of Mathematics and Natural Sciences, Gaussstrasse 20, 42119 Wuppertal, Germany}
\email{cbudde@uni-wuppertal.de}
\author{B\'alint Farkas}
\address{University of Wuppertal, School of Mathematics and Natural Sciences, Gaussstrasse 20, 42119 Wuppertal, Germany}
\email{farkas@math.uni-wuppertal.de}

\begin{abstract}
We discuss the construction of the full Sobolev (H\"older) scale for non-densely defined operators on a Banach space with rays of minimal growth. In particular, we give a construction for extrapolation- and Favard spaces of generators of (bi-continuous) semigroups, or which is essentially the same, Hille--Yosida operators on \emph{Saks spaces}. 
\end{abstract}

\date{}
\maketitle

\def\BC{\mathrm{C}_{\mathrm{b}}}
\def\BUC{\mathrm{UC}_{\mathrm{b}}}
\def\Lipb{\mathrm{Lip}_{\mathrm{b}}}
\def\dom{D}
\def\txX{\widetilde{X}}
\def\xX{\underline{X}}
\def\NN{\mathbb{N}}
\def\CC{\mathbb{C}}

\def\ZZ{\mathbb{Z}}

\def\RR{\mathbb{R}}
\def\aA{\underline{A}}
\def\tT{\underline{T}}
\def\calP{\mathcal{P}}

\section*{Introduction}
Extrapolation spaces for generators of \emph{$C_0$-semigroups} (used here synonymously to  \emph{`strongly continuous, one-parameter operator semigroups of bounded linear operators'}) on Banach spaces, or for more general operators, have been designed to study maximal regularity questions by Da Prato and Grisvard \cite{DAPRATO1984107}; see also Walter \cite{Walter}, Amann \cite{Amann}, van Neerven \cite{van1992adjoint},  Nagel, Sinestrari \cite{nagel1993inhomogeneous}, Nagel \cite{NagelSurvey}, and Sinestrari \cite{Sinestrari1996}. These spaces (and the extrapolated operators) also play a central role in certain abstract perturbation results, most prominently for boundary-type perturbations, see e.g., Desch, Schappacher \cite{Desch},  Greiner \cite{Greiner}, Staffans, Weiss \cite{Staffans2004},  Adler, Bombieri, Engel \cite{adler2014}, and as a result their application area is vast. 

\medskip
In this paper, we concentrate on the construction of extrapolation spaces for linear operators $A$ having non-empty resolvent set, but we do not assume the operator to be a Hille--Yosida operator or to be densely defined. For the densely defined case such a construction is known due to the seminal papers of  Da Prato, Grisvard, \cite{DaPrato1982}, Amann \cite{Amann} and Nagel, Sinestrari \cite{nagel1993inhomogeneous}. In the case of non-densely defined, sectorial operators there is a very general---\emph{almost purely algebraic}---construction due to Haase \cite{Haase}, who also discusses  universal extrapolation spaces. Here, we present a slightly different construction of extrapolation- and extrapolated Favard spaces, leading to  the construction of \emph{extrapolated semigroups} in the absence of norm strong continuity.  For a non-densely defined Hille--Yosida operator $A$ on the Banach space $X_0$ such a construction is possible by taking the part of $A$ in $\xX_0:=\overline{\dom(A)}$, on which space the restricted operator becomes a generator of a $C_0$-semigroup, so one can construct an extrapolated semigroup on the extrapolation space $\xX_{-1}$, see Nagel, Sinestrari \cite{NS}. But this semigroup will usually not leave the original Banach space $X_0$ invariant. This is why we restrict our attention to the situation where strong continuity of the semigroup is guaranteed with respect to some coarser locally convex topology. Here the framework of bi-continuous semigroups, or that of Saks spaces, (see K\"uhnemund \cite{Ku} and Section \ref{sec:bicont}) appears to be adequate. However, most of the results presented here are valid also for generators of other classes of semigroups: integrable semigroups of Kunze \cite{Kunze2009}, ``C-class'' semigroups of Kraaij \cite{Kraaij2016}, $\pi$-semigroups of Priola \cite{Priola}, weakly continuous semigroups of Cerrai \cite{Cerrai} to mention a few.

\medskip Given a Banach space $X_0$, a Hausdorff locally convex topology $\tau$ on $X_0$ (with certain properties described in Section \ref{sec:bicont}), a bi-continuous semigroup $(T(t))_{t\geq 0}$ with generator $A$, we construct the full scale of abstract Sobolev (or H\"older) and Favard spaces $X_\alpha$, $\xX_\alpha$, $F_\alpha$ for $\alpha\in \RR$, and the corresponding extrapolated semigroups $(T_\alpha(t))_{t\geq 0}$. (If $\tau$ is the norm topology, there is nothing new here, and everything can be found in \cite[Section II.5]{EN}.) These constructions, along with some  applications, form the main content of this paper. Here we illustrate the results on the following well-known example (see also Nagel, Nickel, Romanelli \cite{NagelIdent} and Section \ref{sec:examp} for details): Consider the Banach space $X_0:=\BC(\RR)$ of bounded, continuous functions and the (left) translation semigroup $(S(t))_{t\geq 0}$ thereon, defined by $(S(t)f)(x)=f(x+t)$, $x\in\RR$, $t\geq 0$, $f\in X_0$. For $\alpha\in (0,1)$
\[
\BC^1(\RR)\hookrightarrow \Lipb(\RR) \hookrightarrow\h_b^{\alpha}(\RR)\chookrightarrow \h_{b,\text{loc}}^{\alpha}(\RR)\chookrightarrow\BC^{\alpha}(\RR)\hookrightarrow\BUC(\RR)\chookrightarrow \BC(\RR)\chookrightarrow \Ell^{\infty}(\RR),
\]
where $\BC^1$ is the space of differentiable functions with derivative in $\BC$, $\Lipb(\RR)$ is the space of bounded, Lipschitz functions, $\h_b^{\alpha}$ is the space of bounded, little-H\"older continuous functions, $\h_{b,\text{loc}}^{\alpha}$ is the space of bounded, locally little-H\"older continuous functions, $\BC^\alpha$ is the space of bounded, H\"older continuous functions,  $\BUC(\RR)$ is the space of bounded, uniformly continuous functions. From the abstract perspective and using our notation this corresponds to 
the inclusions of Banach spaces:
\[
X_1\hookrightarrow F_1\hookrightarrow \xX_{\alpha}\chookrightarrow X_{\alpha}\chookrightarrow F_{\alpha}\hookrightarrow\xX_0\chookrightarrow X_0\chookrightarrow  F_0.
\]
 The extension of the previous diagram for the full scale $\alpha\in \RR$ is possible by extrapolation. The spaces $\xX_\alpha$ and $F_\alpha$ ($\alpha\in (0,1)$) are well studied and we refer to  the books by Lunardi \cite{Lunardi} and by Engel, Nagel \cite[Section II.5]{EN} for a systematic treatment. However, the definition of $X_\alpha$ is new, and requires a recollection of results concerning the other two kinds of spaces.

\medskip Extrapolated Favard spaces are not only important from the perturbation theoretic point of view: They sometimes help to reduce problems concerning semigroups being no strongly continuous, to the study of the underlying $C_0$-semigroup. This perspective is propagated by Nagel and Sinestrari in \cite{NS}: To any Hille--Yosida operator on $X_0$ one can construct a Banach space $F_0$ (the Favard class) containing $X_0$ as a closed subspace,  and a semigroup $(T(t))_{t\geq 0}$  on $F_0$.
We adapt this point of view also in this paper. In particular, we provide an alternative (and short) proof of the Hille--Yosida type generation theorem for bi-continuous semigroups (due to K\"uhnemund \cite{Ku}) by employing solely the $C_0$-theory. Note, however, that the semigroup $(T(t))_{t\geq 0}$ defined on $F_0$ may not leave $X_0$ invariant in general, this is the issue, where the additional topology $\tau$ can be helpful. 

\medskip Applications of the Sobolev (H\"older) scale, as presented here, to perturbation theory, in the spirit of the results of Desch, Schappacher \cite{Desch}, or of Jacob, Wegner, Wintermayr \cite{JWW}, will be presented in a forthcoming paper.

\medskip This work is organized as follows: In Section \ref{sec:invert} we recall the standard constructions and result for extrapolation spaces for densely defined (invertible) operators. Moreover, we construct extrapolation spaces for not densely defined operators $A$ with $\dom(A^2)$ dense in $\dom(A)$ for the norm of $X_0$. Our argumentation differs form the one in Haase \cite{Haase} in that we build the space $X_{-1}$ based on $\xX_{-2}$ (which, in turn, arises from $\xX_0$ and $\xX_{-1}$), i.e., in a bottom-to-top and back to-bottom manner, resulting in the continuous inclusions
\[
\xX_0\hookrightarrow X_0\hookrightarrow \xX_{-1}\hookrightarrow X_{-1}\hookrightarrow\xX_{-2}.
\] 
(All these inclusions are not surjective in general.)
This approach becomes convenient when we compare the arising extrapolation spaces $X_{-1}$ and $\xX_{-1}$ and construct the extrapolated semigroups. In Section \ref{sec:minimal} we turn to intermediate spaces; the results there are classical, but are put in the general perspective of this paper. We also present a method for a `concrete' representation of extrapolation spaces. Section \ref{sec:sgrps} discusses the Sobolev (H\"older) scale for semigroup generators, and has a survey character. In Section \ref{sec:bicont} we recall the concept of bi-continuous semigroups, construct the corresponding extrapolated semigroups and give a direct proof of the Hille--Yosida generation theorem (due to K\"uhnemund \cite{Ku}) which uses extrapolation techniques.  We conclude this paper with some examples in Section \ref{sec:examp}, where we determine the extrapolation spaces of concrete semigroup generators. Among others the previously mentioned example of the translation semigroup (complementing results of Nagel, Nickel, Romanelli \cite[Sec.3.1, 3.2]{NagelIdent}) and then \emph{implemented semigroups} (cf.{} Alber \cite{Alber2001}) are discussed in  detail.

\section{Spaces for invertible operators}\label{sec:invert}
In this section we construct abstract Sobolev (H\"older) and extrapolation spaces (the so-called Sobolev scale) for a boundedly invertible linear operator $A$ defined on a Banach space. Some of the results are well-known and nowadays even standard, but we chose to include them here for a sake of completeness, and also because a review of these is necessary for the construction of spaces when we deal with not densely defined operators. The emphasis will be, however, put on this case, when the construction of these extrapolation spaces is new, see Section \ref{sec:extra} below. We also note that everything in what follows is also valid for operators on Fr\'echet spaces, an except for some assertions one does not even need metrizability.

\medskip\noindent 
Let $X_0$ be Banach space, and let $A:\dom(A)\to X_0$ be a not necessarily densely defined linear operator with non-empty resolvent set $\rho(A)\neq\emptyset$. As a matter of fact, for convenience we suppose $0\in \rho(A)$. If this was not so, then by taking $\lambda\in \rho(A)$ we may consider $A-\lambda$ instead of $A$ and carry out the constructions for this new operator, for which in fact $0\in \rho(A-\lambda)$. The arising spaces will not depend on $\lambda\in\rho(A)$ (up to isomorphism).
\subsection{Abstract Sobolev spaces} 
The material presented here is known, see Nagel \cite{NagelSobolev}, Nagel, Nickel, Romanelli \cite{NagelIdent} or Engel, Nagel \cite[Section II.5]{EN}, and some parts are valid even for operators on locally convex spaces.
We set $X_1:=\dom(A)$ which becomes a Banach space if endowed with the graph norm 
\[
\|x\|_{A}:=\|x\|+\|Ax\|.
\]
An equivalent norm is given by $\|x\|_{X_1}:=\|Ax\|$, since we have assumed $0\in\rho(A)$.
Then we have the isometric isomorphism
\[
A:X_1\to X_0\quad \text{with inverse}\quad A^{-1}:X_0\to X_1.
\]
\begin{definition} Suppose $0\in\rho(A)$ here and throughout in the following. Let $n\in \NN$.
\begin{abc}
\item We define
\[
X_n:=\dom(A^n)\quad\text{and}\quad \|x\|_{X_n}:=\|A^n x\|\quad\text{for $x\in X_n$}.
\]
If we want to stress the dependence on $A$, then we write $X_{n}(A)$ and $\|\cdot\|_{X_{n}(A)}$.
\item Let
\[
X_{\infty}(A):=\bigcap_{n\in\NN} X_n,
\] 
often abbreviated as $X_\infty$.
\item We further set 
\[
\xX_0:=\overline{\dom(A)}, \quad \aA:=A\rest_{\xX_0},
\]
the part of $A$ in $\xX_0$, i.e.,
\[
\dom(\aA)=\bigl\{x\in\dom(A):Ax\in \xX_0\bigr\}.
\]
 Moreover, we let
 \[
 \xX_n:=\dom (\aA^n), \quad \|x\|_{\xX_n}:=\|\aA^n x\|.
 \]
To be specific about the underlying operator $A$ we write $\xX_n(A)$ and $\|x\|_{\xX_n(A)}$.

\item For $n\in\NN$ we set $A_{n}:=A\rest_{X_n}$, the part of $A$ in $X_n$, in particular $A_0=A$. Similarly, we let $\aA_{n}:=\aA|_{\xX_n}$, for example $\aA_0=\aA$. By this notation we also understand implicitly that the surrounding space is $X_n(A)$ respectively $\xX_n(A)$ with its norm, see the next Remark \ref{rem:1}.
\end{abc}
\end{definition}

\begin{remark}\label{rem:1}
\begin{num}
\item The choice of the notation $X_0$ and $\xX_0$ and the like should be self-explanatory: By ``underlining'' we always indicate an object which is in some sense smaller than the one without underlining. The space $\xX_0(A)$ is connected with the domain of $\dom(A)$, and the whole issue of distinguishing between $X_0$ and $\xX_0$ becomes interesting only if $A$ is not densely defined but its part $\aA$ \emph{is} (cf. Remark \ref{rem:2}). We will stick to the notation $\aA$ for the part of the operator $A$ instead of $A\rest_{\xX_0}$, because it fits better with our general notation.
\item If $A$ is densely defined, then $X_n(A)=\xX_n(A)$ for each $n\in \NN$. In particular, if $\xX_1(A)=\dom(\aA)$ is dense in $\xX_0(A)$, then $\xX_n(A)=\xX_n(\aA)$ for each $n\in\NN$.
\item For $n\in\NN$ we evidently have $X_1(A^n)=X_n(A)$. Also $\xX_1(A^n)=\xX_n(A)$ holds, because $\dom(\aA^n)=\dom(\underline{A^n})$. Indeed, the inclusion ``$\dom(\aA^n)\subseteq \dom(\underline{A^n})$'' is trivial. While for $x\in \dom(\underline{A^n})$ we have $x\in \xX_0$ and $A^nx\in \xX_0$, implying $A^{n-1}x\in \dom(\aA)$, and then recursively $x\in \dom(\aA^n)$.
\item For $x\in \dom(A_n)=\dom(A^{n+1})$ we have $\|x\|_{X_1(A_n)}=\|A_nx\|_{X_n(A)}=\|A^{n+1}x\|=\|x\|_{X_{n+1}(A)}$. Similarly $\dom(\aA_n)=\dom(\aA^{n+1})$.
\end{num}
\end{remark}

\begin{proposition}Suppose $\aA$ is densely defined in $\xX_0$.
\begin{abc}
\item For $n\in \NN$ the mappings $A^n:X_n\to X_0$ and $\aA^n:\xX_n\to \xX_0$ are isometric isomorphisms. 
\item For $n\in\NN$ the operators $A_n:X_{n+1}\to X_{n}$ and $\aA_n:\xX_{n+1}\to \xX_{n}$ are isometric isomorphisms that intertwine $A_{n+1}$ and $A_n$, respectively, $\aA_{n+1}$ and $\aA_n$.
\item If $\dom(\aA)$ is dense in $\xX_0$, then $X_\infty$ is dense in $\xX_n$ for each $n\in\NN$. As a consequence $\xX_m$ is dense in $\xX_n$ for each $m,n\in \NN$ with $m\geq n$.
\end{abc}
\end{proposition}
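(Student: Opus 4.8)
The plan is to prove the three assertions in sequence, each relying on the structure of $A$ being boundedly invertible ($0\in\rho(A)$) together with the standing hypothesis that $\aA$ is densely defined in $\xX_0$. For part (a), I would first observe that $A:X_1\to X_0$ is an isometric isomorphism by the very definition of the norm $\|\cdot\|_{X_1}=\|A\cdot\|$ and the assumption $0\in\rho(A)$; the case of general $n$ then follows by induction, writing $A^n = A\circ A^{n-1}$ and using Remark \ref{rem:1}(3)--(4) to identify $X_n(A)=X_1(A^n)$ with the correct norm. The statement for $\aA$ is literally the same argument applied to the operator $\aA$ on the Banach space $\xX_0$, noting that $0\in\rho(\aA)$ since $\rho(A)\subseteq\rho(\aA)$ when one passes to the part of an operator (or directly: $\aA^{-1}=A^{-1}\rest_{\xX_0}$ maps $\xX_0$ into $\xX_1$).

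For part (b), the key point is that $A_n:X_{n+1}\to X_n$ is, up to the identifications of Remark \ref{rem:1}(4), exactly the map $A$ viewed between the Sobolev spaces one level up. More precisely, for $x\in\dom(A_n)=\dom(A^{n+1})=X_{n+1}$ we have $\|A_n x\|_{X_n}=\|A^{n+1}x\|=\|x\|_{X_{n+1}}$, which gives the isometry; surjectivity of $A_n$ onto $X_n$ follows because $A^{-1}$ maps $X_n=\dom(A^n)$ bijectively onto $\dom(A^{n+1})=X_{n+1}$ (apply $A^{-1}$ and check the domain condition). The intertwining relation $A_n A_{n+1}=A_n A_n$ on the appropriate domain (i.e.\ $A_n$ restricted to $X_{n+2}$ equals $A_{n+1}$ followed by the inclusion, both realizing the action of $A$) is then immediate from the fact that all these operators are restrictions of the single operator $A$. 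The same reasoning with $\aA$ in place of $A$ handles the underlined spaces.

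For part (c), which I expect to be the only step with genuine content, the goal is density of $X_\infty=\bigcap_n X_n$ in each $\xX_n$. I would argue as follows: it suffices to show $X_\infty$ is dense in $\xX_0$, because the isometric isomorphism $\aA^n:\xX_n\to\xX_0$ of part (a) carries $X_\infty$ into $X_\infty$ (since $X_\infty$ is clearly $A$- and $A^{-1}$-invariant, hence $\aA^{-n}$-invariant) — so a dense subset of $\xX_0$ pulls back to a dense subset of $\xX_n$, and the preimage of $X_\infty$ under $\aA^n$ contains $X_\infty$. To prove $X_\infty$ dense in $\xX_0$: given $x\in\xX_0$, by hypothesis $\dom(\aA)$ is dense in $\xX_0$, and more generally one shows by induction that $\dom(\aA^n)=\xX_n$ is dense in $\xX_0$ — the inductive step uses that $\aA^{-1}:\xX_0\to\xX_1$ is an isomorphism, so applying it to a sequence in $\xX_n$ converging to $\aA y$ (for $y\in\xX_1$ arbitrary, approximated from $\xX_0$ in two stages) produces a sequence in $\xX_{n+1}$ converging to $y$ in $\xX_0$; a careful diagonal/approximation argument then yields an element of $\bigcap_n\xX_n=X_\infty$ close to $x$. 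The main obstacle is organizing this limiting argument cleanly: one wants to approximate in the $\xX_0$-norm while controlling that iterated application of the bounded operator $\aA^{-1}$ does not destroy the approximation, which is handled by first using density of $\dom(\aA)$, then of $\dom(\aA^2)$, etc., and taking a diagonal sequence. Finally, the ``consequence'' that $\xX_m$ is dense in $\xX_n$ for $m\geq n$ follows at once, since $X_\infty\subseteq\xX_m\subseteq\xX_n$ and $X_\infty$ is already dense in $\xX_n$.
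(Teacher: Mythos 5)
Your parts (a) and (b) are correct and amount to the same observation the paper makes (it dismisses both as ``trivial by construction''): with $0\in\rho(A)$ the norms are defined precisely so that $A^n$ and $\aA^n$ are isometric bijections, all the operators $A_n$, $\aA_n$ are restrictions of the single operator $A$, and $A^{-1}$ maps $\dom(A^n)$ bijectively onto $\dom(A^{n+1})$, which gives surjectivity and the intertwining for free.

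Part (c) is where the content lies, and there your argument has a genuine gap. The paper does not prove (c) directly: it cites Theorem~6.2 of Arendt, El-Mennaoui and K\'eyantuo, which rests on a Mittag--Leffler-type theorem of Esterle for projective limits of complete metric spaces. Your reduction to showing that $X_\infty$ is dense in $\xX_0$ is fine, and so is your induction showing that each $\xX_n$ is dense in $\xX_0$. But the concluding step --- ``a careful diagonal/approximation argument then yields an element of $\bigcap_n\xX_n$ close to $x$'' --- does not follow from what you have established. A decreasing sequence of dense subspaces of a Banach space can have intersection $\{0\}$ (take a linearly independent family $\{d_{(l,k)}\}_{l,k\in\NN}$ with $d_{(l,k)}$ chosen in the $k$-th ball of a countable base, and let $V_n$ be the span of the $d_{(l,k)}$ with $l\geq n$), so density of every $\xX_n$ in $\xX_0$ gives no control whatsoever over $X_\infty$; moreover a diagonal sequence of approximants $x_n\in\xX_n$ with $\|x_n-x\|\to0$ converges to $x$ itself, not to an element of the intersection. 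The correct argument must exploit two additional facts: (i) $\xX_{n+1}$ is dense in $\xX_n$ \emph{for the graph norm} $\|\cdot\|_{\xX_n}$ (your isomorphism trick does yield this, by applying $\aA^{-n}$ to the density of $\xX_1$ in $\xX_0$), and (ii) each $(\xX_n,\|\cdot\|_{\xX_n})$ is complete. One then runs the Mittag--Leffler scheme: given $x\in\xX_0$ and $\veps>0$, construct recursively $y_n\in\xX_n$ with $\|y_0-x\|<\veps/2$ and $\|y_{n+1}-y_n\|_{\xX_k}<\veps\,2^{-n-2}$ for all $k\leq n$; completeness forces $(y_n)$ to converge in every $\xX_k$ to one and the same limit $y$, whence $y\in X_\infty$ and $\|y-x\|<\veps$. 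Without this simultaneous control in all the graph norms the proof does not close; with it, you are essentially reproving Esterle's lemma in this special case, which is legitimate but needs to be carried out (or cited), not gestured at.
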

\begin{proof} The statements (a) and (b) are trivial by construction. 

\medskip\noindent (c) This is \cite[Thm. 6.2]{Arendt94} due to Arendt, El-Mennaoui and K\'eyantuo, because $\aA$ is densely defined in $\xX_0$.
\end{proof}

\begin{remark}
We note that the proof of the previous assertion (c) in \cite[Thm. 6.2]{Arendt94} is based on a Mittag-Leffler type result due to Esterle \cite{Esterle} which is valid in complete metric spaces. Hence the previous statements (a), (b) and (c) are all remain true for Fr\'echet-spaces with verbatim the same proof as in \cite{Arendt94}.
\end{remark}
Henceforth, another standing assumption in this paper (though not everywhere needed) is that $\aA$ is in $\xX_0$ densely defined, i.e.,
\[
\overline{\dom(\aA)}=\xX_0.
\]

\begin{remark}\label{rem:2} The condition of $\dom(\aA)$ being dense in $\xX_0$ can be for example assured if there are $M,\omega>0$ such that $(\omega,\infty)\subseteq \rho(A)$ and \begin{equation}\label{eq:wHY1} \|\lambda R(\lambda,A)\|\leq M\quad\text{for all $\lambda>\omega$}. \end{equation} Indeed,
in this case we have for $x\in \dom(A)$ \[ \|\lambda R(\lambda, A)x-x\|=\|R(\lambda,A)Ax\|\leq \frac{M\|Ax\|}{\lambda}\to 0\quad\text{for $\lambda\to \infty$}. \] Hence $\dom(A^2)\subseteq \dom(\aA)$ is dense in $\dom(A)$ for the norm of $X_0$, and this implies the density of $\dom(\aA)$ in $\xX_0$. An
operator $A$ satisfying \eqref{eq:wHY1} is often said to have a \emph{ray of minimal growth}, see, e.g., \cite[Chapter 3]{Lunardi}, and also Section \ref{sec:minimal} below. Another term being used is \emph{``weak Hille--Yosida operator''}. \end{remark}

\begin{proposition}
If $T\in\LLL(X_0)$ is a linear operator commuting with $A^{-1}$, then the spaces $X_n$ and $\xX_n$ are $T$-invariant, and $T\in \LLL(X_n)$ for $n\in \NN$.
\end{proposition}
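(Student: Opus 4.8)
The plan is to show that $T$-invariance and boundedness on $X_n$ follow from the defining property that $A^n\colon X_n\to X_0$ is an isometric isomorphism, together with the hypothesis that $T$ commutes with $A^{-1}$. First I would observe that commutation with $A^{-1}$ upgrades to commutation with $A^{-n}$ for every $n\in\NN$, simply by iterating: $TA^{-n}=A^{-1}TA^{-(n-1)}=\cdots=A^{-n}T$. Since $A^{-n}\colon X_0\to X_n$ is the inverse of the isometric isomorphism $A^n\colon X_n\to X_0$, every element of $X_n$ can be written as $A^{-n}y$ for a unique $y\in X_0$.

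Next, for $x\in X_n$ write $x=A^{-n}y$ with $y=A^nx\in X_0$. Then $Tx=TA^{-n}y=A^{-n}(Ty)$, and since $Ty\in X_0$ we get $Tx\in\operatorname{ran}(A^{-n})=X_n$; this proves $X_n$ is $T$-invariant. Moreover, using that $A^n$ is isometric from $X_n$ to $X_0$,
\[
\|Tx\|_{X_n}=\|A^n Tx\|=\|A^nA^{-n}(Ty)\|=\|Ty\|\le\|T\|_{\LLL(X_0)}\,\|y\|=\|T\|_{\LLL(X_0)}\,\|A^nx\|=\|T\|_{\LLL(X_0)}\,\|x\|_{X_n},
\]
so $T\in\LLL(X_n)$ with the same operator-norm bound. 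For the statement about $\xX_n$, I would note that $\xX_n=\dom(\aA^n)$ consists precisely of those $x\in X_n$ for which $x,Ax,\dots,A^nx$ all lie in $\xX_0=\overline{\dom(A)}$ (by Remark \ref{rem:1}(3)); since $T$ commutes with $A^{-1}$, it maps $\dom(A)=\operatorname{ran}(A^{-1})$ into itself and hence leaves $\xX_0$ invariant (as $T$ is norm-bounded on $X_0$, it preserves closures), and it commutes with each power of $A$ on the relevant domains, so it preserves the conditions $A^jx\in\xX_0$; the norm estimate on $\xX_n$ is then identical to the one above with $\aA$ in place of $A$.

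I do not expect a serious obstacle here: the content is entirely the transport of the bounded operator $T$ through the isometric isomorphisms $A^{\pm n}$, and the only mildly delicate point is checking that $T$ leaves $\xX_0$ invariant, which is where one uses that $T$ commutes with $A^{-1}$ (hence maps $\operatorname{ran}(A^{-1})=\dom(A)$ into itself) and that $T$ is norm-continuous on $X_0$ (hence preserves the closure $\overline{\dom(A)}$).
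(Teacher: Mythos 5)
Your proof is correct and takes essentially the same route as the paper's: commutation with $A^{-1}$ yields invariance of $\dom(A)$ together with $ATx=TAx$, the isometric isomorphism $A^{n}\colon X_n\to X_0$ gives the norm bound, and norm-boundedness of $T$ on $X_0$ gives invariance of $\xX_0=\overline{\dom(A)}$ and hence of $\xX_n$. The only cosmetic difference is that the paper treats $n=1$ and invokes recursion (or Remark \ref{rem:1}), whereas you iterate the commutation to $A^{-n}$ and argue for general $n$ directly.
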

\begin{proof}The condition means that $Tx\in \dom(A)$ for each $x\in\dom(A)$ and for such $x$ we have $ATx=TAx$. This implies the invariance of $X_1$ and that $\|Tx\|_{X_1(A)}\leq \|T\|\|x\|_{X_{1}(A)}$. Using the boundedness assumption we see that $\xX_1$ too stays invariant under $T$. For general $n\in \NN$ we may argue by recursion, or simply invoke Remark \ref{rem:1}.
\end{proof}

\subsection{Extrapolation spaces}\label{sec:extra}
The construction for the extrapolation spaces here is known and more or less standard if $A$ is densely defined, or if $A$ is Hille--Yosida operator, see, e.g., \cite{NS}. 

For $x\in X_0$ we define $\|x\|_{\xX_{-1}(A)}:=\|A^{-1}x\|$. Then the surjective mapping
\[
A:(\dom(A),\|\cdot\|)\to (X_0,\|\cdot\|_{\xX_{-1}(A)})
\]
becomes isometric, and hence has a uniquely determined continuous extension
\[
\aA_{-1}:(\xX_0,\|\cdot\|)\to(\xX_{-1},\|\cdot\|_{\xX_{-1}(\aA)}), 
\]
which is then an isometric isomorphism, where $(\xX_{-1},\|\cdot\|_{\xX_{-1}(\aA)})$
denotes a completion of \\$(\xX_0,\|\cdot\|_{\xX_{-1}(A)})$. Of course, uniqueness is given as soon as the completion is fixed.
By construction we obtain immediately:

\begin{proposition}\label{prop:altercompl}
$X_0$ is continuously and densely embedded in $\xX_{-1}$. If $\aA$ is densely defined in $\xX_0$, then also $X_\infty$ is dense in $\xX_{-1}$. As a consequence
$(\xX_{-1},\|\cdot\|_{\xX_{-1}(\aA)})$ is the completion of $(\xX_0,\|\aA^{-1}\cdot \|)$. 
\end{proposition}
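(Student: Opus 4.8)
The plan is to reduce all three assertions to properties of the single normed space $(\xX_0,\|\cdot\|_{\xX_{-1}(A)})$, using only that $\|\cdot\|_{\xX_{-1}(A)}=\|A^{-1}\cdot\|$ is a norm on $X_0$ weaker than $\|\cdot\|$, that $\aA^{-1}=A^{-1}\rest_{\xX_0}$, and the density statements already available. First I would record the easy part: since $\|x\|_{\xX_{-1}(A)}=\|A^{-1}x\|\le\|A^{-1}\|\,\|x\|$ and $A^{-1}$ is injective, $\|\cdot\|_{\xX_{-1}(A)}$ is a genuine norm on $X_0$, weaker than $\|\cdot\|$; hence the canonical map of $X_0$ into the completion of $(X_0,\|\cdot\|_{\xX_{-1}(A)})$ is a continuous injection with dense range. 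The real content of the first assertion is therefore that $\xX_0$ is $\|\cdot\|_{\xX_{-1}(A)}$-dense in $X_0$: granting this, the completion of $(X_0,\|\cdot\|_{\xX_{-1}(A)})$ coincides with that of $(\xX_0,\|\cdot\|_{\xX_{-1}(A)})$, that is, with $\xX_{-1}$, and $X_0$ sits inside it continuously and densely. To establish the density I would fix $x\in X_0$; then $A^{-1}x\in\dom(A)\subseteq\overline{\dom(\aA)}=\xX_0$ (here the standing hypothesis that $\aA$ is densely defined is used), so one may choose $z_n\in\dom(\aA)$ with $z_n\to A^{-1}x$ in $\|\cdot\|$ and put $y_n:=\aA z_n\in\xX_0$; then $\|x-y_n\|_{\xX_{-1}(A)}=\|A^{-1}x-A^{-1}y_n\|=\|A^{-1}x-z_n\|\to0$.

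For the second assertion, observe that $X_\infty\subseteq\dom(A)\subseteq\xX_0$ and that on $\xX_0$ the norm $\|\cdot\|_{\xX_{-1}(A)}$ is dominated by $\|A^{-1}\|\,\|\cdot\|$, so it suffices to know that $X_\infty$ is $\|\cdot\|$-dense in $\xX_0$; this is exactly part (c) of the Proposition above, which applies because $\aA$ is densely defined. Combined with the first assertion this gives density of $X_\infty$ in $\xX_{-1}$. The last assertion is then immediate: for $y\in\xX_0$ one has $A^{-1}y\in\dom(\aA)$ and $\aA A^{-1}y=y$, hence $\aA^{-1}y=A^{-1}y$ and $\|\aA^{-1}y\|=\|y\|_{\xX_{-1}(A)}$; thus $(\xX_0,\|\aA^{-1}\cdot\|)$ is precisely the normed space whose completion defines $(\xX_{-1},\|\cdot\|_{\xX_{-1}(\aA)})$.

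Apart from that last (essentially tautological) step, everything rests on the two density facts, which in turn rely on the standing assumption $\overline{\dom(\aA)}=\xX_0$ and, for $X_\infty$, on the Mittag-Leffler type (Arendt--El-Mennaoui--K\'eyantuo) argument already quoted. I expect no further obstacle; the only point requiring care is to keep the two ambient normed spaces $(X_0,\|\cdot\|_{\xX_{-1}(A)})$ and $(\xX_0,\|\cdot\|_{\xX_{-1}(A)})$ apart until their completions have been identified.
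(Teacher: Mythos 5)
Your proof is correct and follows essentially the same route as the paper: all three assertions are reduced, via the isometry $A:(\dom(A),\|\cdot\|)\to(X_0,\|\cdot\|_{\xX_{-1}(A)})$, to $\|\cdot\|$-density statements inside $\dom(A)$ (density of $\dom(\aA)$, respectively of $X_\infty$), which is exactly the mechanism behind the paper's one-line remark that $X_\infty$ is $\|\cdot\|$-dense in $\dom(A)$. Your explicit verification that $\xX_0$ is $\|\cdot\|_{\xX_{-1}(A)}$-dense in $X_0$ (approximating $A^{-1}x$ by elements $z_n\in\dom(\aA)$ and setting $y_n=\aA z_n$) is a welcome clarification of the point the paper glosses over, namely that the completions of $(X_0,\|\cdot\|_{\xX_{-1}(A)})$ and of $(\xX_0,\|\cdot\|_{\xX_{-1}(A)})$ coincide.
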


\begin{proof}
Of course $X_0$is dense in $\xX_{-1}$ because of completion. For $x\in X_0$ we have
\[
\|x\|_{\xX_{-1}(\aA)}=\|AA^{-1}x\|_{\xX_{-1}(\aA)}=\|\aA_{-1}A^{-1}x\|_{\xX_{-1}(A)}\leq \|\aA_{-1}\|\cdot \|A^{-1}x\|\leq \|\aA_{-1}\|\cdot \|A^{-1}\|\cdot \|x\|,
\]
showing the continuity of the embedding. The last assertion follows since $X_\infty$ is dense in $\dom(A)$ with respect to $\|\cdot\|$. 
\end{proof}

Of course one can iterate the whole procedure an obtain the following chain of dense and continuous embeddings
\[
\xX_{0}\inj \xX_{-1}\inj \xX_{-2}\inj \cdots\inj \xX_{-n} \quad\text{for $n\in \NN$},
\]
where for $n\geq 1$ the space $\xX_{-n}$ is a completion of $\xX_{-n+1}$ with respect to the norm $\|x\|_{\xX_{-n}(\aA)}$ defined by $\|x\|_{\xX_{-n}(\aA)}=\|\aA^{-1}_{-n+1}x\|_{\xX_{-n+1}(\aA)}$ and 
\[
\aA_{-n}:\xX_{-n+1}\to \xX_{-n} 
\]
is a unique continuous extension of $\aA_{-n+1}:\dom(\aA_{-n+1})\to \xX_{-n+1}$ to $\xX_{-n}$. 

\medskip\noindent These spaces, just as well the ones in the next definition, are called \emph{extrapolation spaces} for the operator $A$, see, e.g., \cite{NS} or \cite[Section II.5]{EN} for the case of semigroup generators. The spaces $\xX_{-1}$ and $\xX_{-2}$, just as well the operator $\aA_{-2}$ will be used to define the extrapolation space $X_{-1}(A)$. To this purpose we identify $X_0$ with a subspace of $\xX_{-1}$ and of $\xX_{-2}$.
\begin{definition}\label{def:extra}
Consider $X_0$ as a subspace of $\xX_{-2}$, and define
\[
X_{-1}:=\aA_{-2}(X_0):=\bigl\{\aA_{-2}x:\ x\in X_0\bigr\}\quad \text{and}\quad \|x\|_{X_{-1}}:=\|\aA_{-2}^{-1}x\|.
\]
Furthermore we set $A_{-1}:=\aA_{-2}|_{X_0}$, the part of $\aA_{-2}$ in $X_0$.
Again, $X_{-1}(A)$ and $\|\cdot\|_{X_{-1}(A)}$ make the notation unambiguous.
\end{definition}

 In what follows, we will define higher order extrapolation spaces and prove that all these spaces line up in a scale, where one can switch between the levels with the help of (a version) of the operator $A$ (or $A_{-1}$).
 
\begin{proposition}
The operator $A_{-1}$ is an extension of $\aA_{-1}$, 
$(X_{-1},\|\cdot\|_{X_{-1}})$ is a Banach space, the norms of $\xX_{-1}$ and $X_{-1}$ coincide on $\xX_{-1}$, and $\xX_{-1}$ is a closed subspace of $X_{-1}$. The mapping $A_{-1}:X_0\to X_{-1}$ is an isometric isomorphism.
\end{proposition}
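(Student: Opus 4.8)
The proof is essentially a matter of unwinding the definitions, so the plan is to fix the identifications first and then read off the five assertions. The one fact doing all the work is that $\aA_{-2}\colon\xX_{-1}\to\xX_{-2}$ is an isometric isomorphism which \emph{extends} $\aA_{-1}$, i.e.\ $\aA_{-2}|_{\xX_0}=\aA_{-1}$; this is built into the construction of $\aA_{-2}$ as the continuous extension of $\aA_{-1}$ from the dense subspace $\xX_0$ of $\xX_{-1}$, and it entails that $\aA_{-2}^{-1}$ coincides with $\aA_{-1}^{-1}$ on $\xX_{-1}$. Throughout I would work with the chain of dense continuous embeddings $\xX_0\hookrightarrow X_0\hookrightarrow\xX_{-1}\hookrightarrow\xX_{-2}$ established above (Proposition \ref{prop:altercompl} and the subsequent iteration), so that viewing $X_0$ as a subspace of $\xX_{-2}$ as in Definition \ref{def:extra} is unambiguous; in particular $\xX_0\subseteq X_0\subseteq\xX_{-1}$.

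I would prove the last claim first, as the rest follows from it. Injectivity of $\aA_{-2}$ gives injectivity of $A_{-1}=\aA_{-2}|_{X_0}$, whose range is $\aA_{-2}(X_0)=X_{-1}$ by definition; and for $x\in X_0$ one has $\|A_{-1}x\|_{X_{-1}}=\|\aA_{-2}^{-1}(\aA_{-2}x)\|=\|x\|$, so $A_{-1}\colon X_0\to X_{-1}$ is a surjective linear isometry. (This also shows that $\|\cdot\|_{X_{-1}}$ genuinely is a norm, being obtained by transporting $\|\cdot\|$ through the linear bijection $A_{-1}$.) Hence $(X_{-1},\|\cdot\|_{X_{-1}})$, being isometrically isomorphic to the complete space $X_0$, is a Banach space. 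For the extension claim: if $x\in\xX_0$ then $x\in X_0$, so $A_{-1}x=\aA_{-2}x=\aA_{-1}x$, the last equality by $\aA_{-2}|_{\xX_0}=\aA_{-1}$; thus $A_{-1}\supseteq\aA_{-1}$ (and a fortiori $A_{-1}\supseteq A$). For $\xX_{-1}\subseteq X_{-1}$: since $\aA_{-1}$ maps $\xX_0$ onto $\xX_{-1}$ and $\xX_0\subseteq X_0$, we get $\xX_{-1}=\aA_{-1}(\xX_0)=\aA_{-2}(\xX_0)\subseteq\aA_{-2}(X_0)=X_{-1}$. For the equality of norms on $\xX_{-1}$: given $x\in\xX_{-1}$, put $y:=\aA_{-1}^{-1}x\in\xX_0\subseteq X_0$; then $\aA_{-2}y=\aA_{-1}y=x$, so $\aA_{-2}^{-1}x=y$ and $\|x\|_{X_{-1}}=\|y\|=\|\aA_{-1}^{-1}x\|=\|x\|_{\xX_{-1}(\aA)}$. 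Finally, $\xX_{-1}$ is complete in $\|\cdot\|_{\xX_{-1}(\aA)}$ (it is, after all, a completion) and this norm is the restriction of $\|\cdot\|_{X_{-1}}$, whence $\xX_{-1}$ is a closed subspace of $X_{-1}$.

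There is no genuinely hard step; the entire statement is bookkeeping with the identifications. The only point deserving explicit care — which is why I would settle it at the very start — is that $\aA_{-2}$ really is an extension of $\aA_{-1}$ (not merely of some isomorphic copy), so that the inverses and the norms line up, and that the embeddings $\xX_0\subseteq X_0\subseteq\xX_{-1}\subseteq\xX_{-2}$ are used consistently. Granting that, each assertion of the Proposition is a one-line consequence of the fact that $\aA_{-2}$ is an isometric isomorphism together with the defining formula $\|x\|_{X_{-1}}=\|\aA_{-2}^{-1}x\|$.
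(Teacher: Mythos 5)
Your proof is correct and follows essentially the same route as the paper's: the whole argument rests on $\aA_{-2}$ being an isometric isomorphism extending $\aA_{-1}$, and your norm computation for $x\in\xX_{-1}$ (passing to $y=\aA_{-1}^{-1}x\in\xX_0$) is exactly the paper's identity $\|\aA_{-2}^{-1}x\|=\|\aA_{-2}^{-1}\aA_{-1}\aA_{-1}^{-1}x\|=\|\aA_{-1}^{-1}x\|$. You merely spell out more of the bookkeeping (injectivity, surjectivity onto $\aA_{-2}(X_0)$, completeness via the isometry with $X_0$) that the paper dismisses as immediate from the definition.
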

\begin{proof}The first assertion is true because $\aA_{-2}$ is an extension of $\aA_{-1}$.
That $X_{-1}$ is a Banach space is immediate from the definition. Since $\aA_{-2}^{-1}\aA_{-1}=\Id$ on $\xX_0$, for $x\in \xX_{-1}$ we have $\aA_{-1}^{-1}x\in \xX_0\subseteq X_0$, so that $\|\aA_{-2}^{-1}x\|=\|\aA_{-2}^{-1}\aA_{-1}\aA^{-1}_{-1}x\|=\|\aA^{-1}_{-1}x\|=\|x\|_{\xX_{-1}}$. This establishes that the norms coincide. Since $\xX_{-1}$ is a Banach space (with its own norm), it is a closed subspace of $X_{-1}$. That $A_{-1}$ is an isometric isomorphism follows from the definition.
\end{proof}

\begin{remark}\label{rem:iter-1}
 For any $n\in\NN$ we have by construction that $\xX_{-1}(\aA_{-n})=\xX_{-(n+1)}(\aA)$ and $X_{-1}(A_{-n})=X_{-(n+1)}(A)$.
\end{remark}

\begin{proposition}\label{prop:intertwine+1}
For $n\in\ZZ$ the operators $A_n:X_{n+1}\to X_{n}$ and $\aA_n:\xX_{n+1}\to \xX_{n}$ are isometric isomorphisms that intertwine $A_{n+1}$ and $A_n$, respectively, $\aA_{n+1}$ and $\aA_n$.
\end{proposition}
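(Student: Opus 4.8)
The statement to prove, Proposition~\ref{prop:intertwine+1}, asserts for \emph{all} $n\in\ZZ$ that $A_n\colon X_{n+1}\to X_n$ and $\aA_n\colon\xX_{n+1}\to\xX_n$ are isometric isomorphisms intertwining the corresponding operators at the next level. My plan is to reduce the claim to the cases already established and then handle the bridge between the non-negative and the negative indices separately. For $n\geq 0$ this is precisely the content of Proposition~(a)--(b) proved earlier under the standing hypothesis that $\aA$ is densely defined in $\xX_0$, so nothing new is needed there. Similarly, for the underlined scale and $n<0$, the iterated construction of the $\xX_{-n}$ as completions came with the unique continuous extensions $\aA_{-n}\colon\xX_{-n+1}\to\xX_{-n}$, which are isometric isomorphisms \emph{by construction}, and the intertwining $\aA_{-n}\aA_{-n+1}=\aA_{-n+1}\aA_{-n}$ on the common dense domain follows by density together with continuity of all operators involved; I would spell this out in one line invoking Remark~\ref{rem:iter-1} to identify $\xX_{-1}(\aA_{-n})$ with $\xX_{-(n+1)}(\aA)$.

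The substance is therefore the non-underlined negative scale. First I would record the definition of the higher extrapolation spaces: iterating Definition~\ref{def:extra} via Remark~\ref{rem:iter-1}, we set $X_{-(n+1)}:=X_{-1}(A_{-n})=\aA_{-(n+2)}(X_{-n})$ with norm $\|x\|_{X_{-(n+1)}}:=\|\aA_{-(n+2)}^{-1}x\|$, and $A_{-(n+1)}:=\aA_{-(n+2)}|_{X_{-n}}$. With this notation, the fact that $A_{-1}\colon X_0\to X_{-1}$ is an isometric isomorphism is exactly the previous proposition, and applying it to the operator $A_{-n}$ in place of $A$ (which is legitimate because $A_{-n}$ again has $0$ in its resolvent set and its part in the closure of its domain is densely defined — this last point needs a short remark, see below) yields that $A_{-(n+1)}\colon X_{-n}\to X_{-(n+1)}$ is an isometric isomorphism for every $n\geq 0$. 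Combined with the $n\geq 0$ case this gives the isomorphism assertion for all $n\in\ZZ$.

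For the intertwining $A_n A_{n+1}=A_{n+1}A_n$ on $X_{n+2}$, I would again separate cases. For $n\geq 0$ it is in the earlier proposition. For the negative range, the cleanest route is to observe that each $A_{-(n+1)}$ is by construction the restriction of a single operator $\aA_{-(n+2)}$, and the underlined operators $\aA_{-k}$ already satisfy $\aA_{-k}\aA_{-k-1}=\aA_{-k-1}\aA_{-k}$ wherever both sides make sense, because each $\aA_{-k}$ extends $\aA$ and extensions of a fixed operator commute on the appropriate domain; restricting to the relevant non-underlined subspaces then transports the identity to the $A_n$'s. The one gluing case, comparing $A_0=A$ with $A_{-1}=\aA_{-2}|_{X_0}$, is handled by the remark in the proof of the previous proposition that $\aA_{-2}$ extends $\aA_{-1}$ extends $A$.

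\textbf{Main obstacle.} The only genuine subtlety, and the step I would be most careful with, is the \emph{inductive applicability} of the base results: to invoke the $n=0$ propositions for $A_{-n}$ I must know that $A_{-n}$ is a boundedly invertible operator whose part in $\overline{\dom(A_{-n})}$ is densely defined. Bounded invertibility is clear since $A_{-n}\colon X_{-n+1}\to X_{-n}$ is an isometric isomorphism from the previous step. For the density of the part, one uses that $X_{-n+1}$ (the domain of $A_{-n}$) sits densely in $X_{-n}$ — which is true because $\xX_{-n+1}$ is dense in $\xX_{-n}$ by the completion construction and $\xX_{-n}$ is dense in $X_{-n}$, or more directly because $X_0$, hence $X_\infty$, is dense all the way down. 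I would state this density once as a lemma-style remark and then the induction runs smoothly; everything else is bookkeeping with the identifications of Remark~\ref{rem:iter-1}.
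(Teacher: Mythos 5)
Your proposal is correct and follows essentially the same route as the paper: the case $n\geq 0$ is delegated to the earlier proposition, the case $n=-1$ holds by the very definition of $X_{-1}$ and $A_{-1}$, and the general negative case is obtained by recursion through the identifications $\xX_{-1}(\aA_{-n})=\xX_{-(n+1)}(\aA)$ and $X_{-1}(A_{-n})=X_{-(n+1)}(A)$ of Remark~\ref{rem:iter-1}, with the one genuine computation being the intertwining of $A_{-1}$ and $A_0$ on $\dom(A)$. Your explicit verification that $A_{-n}$ again satisfies the standing hypotheses (bounded invertibility and density of the part in $\overline{\dom(A_{-n})}=\xX_{-n}$), which the paper leaves implicit in the phrase ``by recursion,'' is a welcome addition but not a different method.
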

\begin{proof} For $n\in \NN$ these have been proved in Proposition \ref{prop:intertwine+1}.
So we assume $n<0$. For $n=-1$ the statement about isometric isomorphisms is just the definition, and the intertwining property is also evident. By recursion we obtain the validity of the assertion for general $n\leq -1$ and for the operator $\aA_n$. By Remark \ref{rem:iter-1} it suffices to prove that $A_{-1}$ intertwines $A_{-1}$ and $A_0$. For $x\in \dom(A_0)=\dom(A)$ we have $A_{-1}x\in X_0=\dom(A_{-1})$ and $Ax=A^{-1}_{-1}A_{-1}A_{-1}x$.
\end{proof}

Thus for $n\in \NN$ we have the following chain of embeddings (continuous, dense, denoted by $\inj$) and inclusions as closed subspaces (denoted by $\subseteq$):
\[
\cdots\inj \xX_{n}\subseteq X_n\inj \xX_0\subseteq X_0\inj \xX_{-1}\subseteq X_{-1}\inj \xX_{-2}\subseteq X_{-2}\inj \cdots \xX_{-n}\subseteq X_{-n}\inj\cdots,\]
where in general the inclusions are strict (see the examples in Section \ref{sec:examp}). We also have the following chain of isometric isomorphisms
\[
\cdots\longrightarrow\xX_{n+1}\stackrel{\aA_n^{-1}}{\longrightarrow} \xX_{n}\longrightarrow\cdots \longrightarrow\xX_{1}\stackrel{\aA_{0}^{-1}}{\longrightarrow} \xX_{0}\stackrel{\aA_{-1}^{-1}}{\longrightarrow}\xX_{-1}{\longrightarrow}\cdots \longrightarrow \xX_{-n+1}\stackrel{\aA_{-n}^{-1}}{\longrightarrow}\xX_{-n}\longrightarrow \cdots
\]
and
\[
\cdots\longrightarrow X_{n+1}\stackrel{A_n^{-1}}{\longrightarrow} X_{n}\longrightarrow\cdots \longrightarrow X_{1}\stackrel{A_{0}^{-1}}{\longrightarrow} X_{0}\stackrel{A_{-1}^{-1}}{\longrightarrow}X_{-1}{\longrightarrow}\cdots \longrightarrow X_{-n+1}\stackrel{A_{-n}^{-1}}{\longrightarrow}X_{-n}\longrightarrow \cdots.
\]

\begin{proposition}
\begin{abc}
\item $\xX_1({\aA_{-1}})=\xX_0$ and $X_1(A_{-1})=X_0$ with the same norms.
\item $\xX_{-1}({\aA_{1}})=\xX_0$ with the same norms.
\item $(\aA_1)_{-1}=\aA$.
\item $X_{-1}(A_{1})=X_0$ with the same norms, and $(A_1)_{-1}=A$.
\end{abc}
\end{proposition}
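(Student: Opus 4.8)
The plan is to verify each of the four identities by unwinding the definitions, using the general scale structure (Proposition~\ref{prop:intertwine+1}) and Remark~\ref{rem:iter-1} to reduce everything to statements about the operators $\aA_{-1}$, $\aA_{-2}$ and $A_{-1}$ already constructed.

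For (a): the space $\xX_1(\aA_{-1})$ is by definition $\dom(\aA_{-1})$ with norm $\|x\|_{\xX_1(\aA_{-1})}=\|\aA_{-1}x\|_{\xX_{-1}(\aA)}$. Since $\aA_{-1}:\xX_0\to\xX_{-1}$ is an isometric isomorphism whose restriction/corestriction to $\dom(\aA)\subseteq\xX_0$ is just $\aA$ mapping onto $\xX_0$, one has $\dom(\aA_{-1})$ as an operator into $\xX_{-1}$ equal to all of $\xX_0$ (the operator $\aA_{-1}$ is \emph{bounded} from $\xX_0$ to $\xX_{-1}$, so its domain is the whole space), hence $\xX_1(\aA_{-1})=\xX_0$ as sets, and the norm identity $\|\aA_{-1}x\|_{\xX_{-1}(\aA)}=\|x\|_{\xX_0}$ is exactly the isometry property established right after Definition~\ref{def:extra}. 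The same argument with $A_{-1}:X_0\to X_{-1}$ isometric isomorphism gives $X_1(A_{-1})=X_0$ with equal norms. For (b): $\xX_{-1}(\aA_1)$ is the completion of $\xX_1(\aA)=\xX_1$ with respect to the norm $\|x\|=\|\aA_1^{-1}x\|_{\xX_1(\aA)}=\|\aA^{-1}\aA x\|=\|x\|_{\xX_0}$ for $x\in\xX_1$; since $\xX_1$ is dense in $\xX_0$ (standing density assumption) and the norm in question is precisely the $\xX_0$-norm, the completion is $\xX_0$ with its norm.

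For (c) and (d): by the construction of $(\aA_1)_{-1}$, it is the unique continuous extension of $\aA_1=\aA|_{\xX_1}$, viewed as a map into $\xX_1$, to the completion $\xX_{-1}(\aA_1)=\xX_0$ identified in (b). Thus $(\aA_1)_{-1}$ is a bounded operator $\xX_0\to\xX_1$ extending $\aA|_{\xX_1}:\dom(\aA^2)\to\xX_1$. On the other hand $\aA:\dom(\aA)\to\xX_0$ is, via (b), exactly a bounded operator from $\xX_0$ (with $\xX_0$-norm) to... here one must be slightly careful: $\aA$ as usually understood is unbounded on $\xX_0$, so the correct reading is that under the isometric identification $\xX_{-1}(\aA_1)\cong\xX_0$ of (b), the extrapolated operator $(\aA_1)_{-1}$ corresponds to $\aA$. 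Concretely, for $x\in\dom(\aA^2)=\xX_2$ both sides equal $\aA x$, and $\xX_2$ is dense in $\xX_0$ (Proposition (c), density of $X_\infty$); since both maps are bounded on the relevant completion, they coincide. The identity $X_{-1}(A_1)=X_0$ with equal norms, and $(A_1)_{-1}=A$, follows by the identical argument applied to $A_1=A|_{X_1}$, using that $X_1$ is defined with norm $\|A\cdot\|$ so that $\|A_1^{-1}x\|_{X_1}=\|x\|_{X_0}$, and that $X_0$ (equivalently $X_\infty$, dense by Proposition (c)) plays the role of the completion.

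The main obstacle I anticipate is purely notational bookkeeping rather than mathematical depth: one must consistently distinguish an operator from its corestriction to a smaller target space (e.g. $\aA_{-1}$ as a map $\xX_0\to\xX_{-1}$ versus $\aA$ as a map $\dom\aA\to\xX_0$), and keep track of which norm each space carries, since several of the "$=$" signs are identifications-as-normed-spaces rather than literal set equalities with literally identical norms. In particular, in (c) and (d) the cleanest route is to observe that everything reduces to: two bounded operators that agree on a dense subspace agree everywhere — so the only real work is checking the norm computations $\|\aA_1^{-1}x\|_{\xX_1}=\|x\|$ (resp.\ $\|A_1^{-1}x\|_{X_1}=\|x\|$) that identify the completion, and then invoking the density of $\xX_2$ (resp.\ $X_\infty$) in $\xX_0$ (resp.\ $X_0$) from the earlier Proposition together with Remark~\ref{rem:iter-1}.
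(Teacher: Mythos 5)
Parts (a) and (b) of your proposal are correct and follow the paper's own route. In (c) your underlying idea (two continuous maps that agree on a dense subspace coincide) is also the paper's, but two details are off: $(\aA_1)_{-1}$ is the continuous extension of $\aA_1:\dom(\aA^2)\to\xX_1$ to a map $\xX_1\to\xX_{-1}(\aA_1)=\xX_0$, i.e.\ it goes \emph{from} $\xX_1$ \emph{to} $\xX_0$, not ``$\xX_0\to\xX_1$'' as you write; and the density you need is that of $\dom(\aA^2)$ in $\xX_1=\dom(\aA)$ \emph{for the graph norm} (the case $n=1$ of the earlier density proposition), not density of $\xX_2$ in $\xX_0$ --- since $\aA$ is not continuous for the $\xX_0$-norm, density in $\xX_0$ does not allow you to pass to the limit.

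The genuine gap is in (d). You treat $X_{-1}(A_1)$ as ``the completion of $X_1$ under $\|A_1^{-1}\cdot\|_{X_1}$''. That is how the \emph{underlined} space $\xX_{-1}(A_1)$ is defined, not $X_{-1}(A_1)$: by Definition~\ref{def:extra}, $X_{-1}(B)$ is the image $\underline{B}_{-2}(X_0(B))$ inside $\xX_{-2}(B)$. Your norm computation $\|A_1^{-1}x\|_{X_1}=\|x\|$ is correct, but the completion of $(X_1,\|A_1^{-1}\cdot\|_{X_1})=(\dom(A),\|\cdot\|_{X_0})$ is $\overline{\dom(A)}=\xX_0$, \emph{not} $X_0$, and $A_1$ fails to be densely defined in $X_1$ exactly when $A$ fails to be densely defined in $X_0$ --- which is the situation this paper is built for. (Your parenthetical ``$X_0$, equivalently $X_\infty$ dense'' is symptomatic: $X_\infty$ is dense in $\xX_0$, not in $X_0$.) As written, your argument would yield $X_{-1}(A_1)=\xX_0$, contradicting the claim. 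The paper instead unwinds the definition and feeds in part (c): $X_{-1}(A_1)=(\aA_1)_{-2}(X_1)=((\aA_1)_{-1})_{-1}(X_1)=\aA_{-1}(X_1)=A\,\dom(A)=X_0$, after which the norm identity and $(A_1)_{-1}=(\aA_1)_{-2}|_{X_1}=A$ follow by the same substitution.
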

\begin{proof} 
(a) By definition $X_1(A_{-1})=\dom(A_{-1})=X_0$ with the graph norm of $A_{-1}$. Since $A_{-1}$ extends $A$, for $x\in X_0$ we have $\|A_{-1}x\|_{X_{-1}(A)}=\|Ax\|_{-1}=\|A^{-1}Ax\|=\|x\|$. The first statement then follows as well, because $\xX_1({\aA_{-1}})=X_1(\aA_{-1})=\overline{\dom(\aA)}=\xX_0$ with the same norms.

\medskip\noindent (b) For $x\in \xX_1(\aA)=\dom(\aA^2)$ we have 
\[
\|x\|_{\xX_{-1}(\aA_1)}=\|\aA_1^{-1}x\|_{\xX_1(\aA)}=\|\aA \aA_{1}^{-1}x\|=\|x\|,
\] 
which can be extended by density for all $x\in \xX_0$, showing also the equality of the spaces $\xX_{-1}({\aA_{1}})=\xX_0$ (with the same norm).

\medskip\noindent (c) By construction the operator $(\aA_1)_{-1}:\xX_1(A)\to \xX_{-1}(\aA_1)$ is the unique continuous  extension of
\[
\aA_1:\dom(\aA_1)=\dom(\aA^2)\to \xX_1(A),
\]
and $(\aA_1)_{-1}$ is an isometric isomorphism. For $x\in \xX_1(A)$ we have $\|x\|_{\xX_{-1}(A_1)}=\|\aA_{1}^{-1}x\|_{\xX_1(A)}=\|x\|$. But then it follows that $(\aA_1)_{-1}=\aA:\dom(\aA)\to \xX_0$.

\medskip\noindent (d) The space $X_{-1}(A_{1})$ is defined by
\[
X_{-1}(A_{1}):=(\underline{A_1})_{-2}(X_1(A))={((\underline{A_1})_{-1})}_{-1}(X_1(A))=\aA_{-1}(X_1(A))=A X_1(A)=X_0,
\]
by part (c). For the norm equality let $x\in X_0$. Then
\[
\|x\|=\|AA^{-1}x\|=\|A^{-1}x\|_{X_1(A)}=\|\aA_{-1}^{-1}x\|_{X_1(A)}=\|(\aA_{1})_{-2}^{-1}x\|_{X_1(A)}=\|x\|_{X_{-1}(A_1)}.
\]
The last assertion is equally easy to prove: $(A_1)_{-1}=(\aA_1)_{-2}|_{X_1(A)}=A$.
\end{proof}

Recall the standing assumption that $\aA=A\rest_{\xX_0}$ is densely defined in $\xX_0=\overline(\dom(A))$.
The following proposition plays the key role for the extension of operators on the extrapolation spaces, particularly for the construction of extrapolated semigroups in Section \ref{sec:sgrps}.
\begin{proposition} \label{prop:extT}
\begin{abc}
\item Let $n\in \NN$.If $T\in\LLL(X_0)$ is a linear operator commuting with $A^{-1}$, then the operator $T$ has a unique continuous extension to $\xX_{-n}$ denoted by $\tT_{-n}$. The operator $\tT_{-n}$ is the restriction of $\tT_{-n-1}$. $X_{-n}$ is invariant under $\tT_{-n-1}$, its restriction is denoted by $T_{-n}$, for which $T_{-n}\in \LLL(X_{-n})$. For $k,n\in -\NN$ the operators $\tT_n$, $\tT_k$ are all similar; the same holds for $T_n$ and $T_k$.
\item Let $\tT\in\LLL(\xX_0)$ such that it leaves $\dom(A)$ invariant and it commutes with $\aA^{-1}=A^{-1}\rest_{X_0}$. Then $\tT_{-1}x=A\tT A^{-1}x$ for each $x\in X_0$, and as a consequence, $\tT_{-1}:\xX_{-1}\to \xX_{-1}$ leaves $X_0$ invariant (and, of course, extends $\tT$).
\end{abc}
\end{proposition}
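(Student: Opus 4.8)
Here is how I would attack Proposition~\ref{prop:extT}. The proof falls into the extension of $T$ along the ``underlined'' scale, which uses only that $T$ commutes with $A^{-1}$, and the descent to the spaces $X_{-n}$, which rests on part~(b). I would therefore prove~(b) in the course of proving~(a) and apply it there to $\tT:=T\rest_{\xX_0}$. This is legitimate: commuting with $A^{-1}$ forces $T$ to leave $\dom(A)$ invariant (if $Ax\in X_0$ then $Tx=TA^{-1}(Ax)=A^{-1}T(Ax)\in\dom(A)$), so by the proposition on $T$-invariance above we have $\tT\in\LLL(\xX_0)$, $\tT$ leaves $\dom(A)\subseteq\xX_0$ invariant, and $\tT$ commutes with $\aA^{-1}=A^{-1}\rest_{\xX_0}$.

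For the extension: since $\tT$ commutes with $\aA^{-1}$, for $x\in\xX_0$ we get $\|\tT x\|_{\xX_{-1}}=\|A^{-1}\tT x\|=\|\tT A^{-1}x\|\le\|\tT\|\,\|A^{-1}x\|=\|\tT\|\,\|x\|_{\xX_{-1}}$, so $\tT$ is $\|\cdot\|_{\xX_{-1}}$-bounded on the dense subspace $\xX_0$ of $\xX_{-1}$ and extends uniquely to $\tT_{-1}\in\LLL(\xX_{-1})$; evaluating $\tT A^{-1}=A^{-1}\tT$ on $\xX_0$ (where $\aA_{-1}^{-1}=A^{-1}$ and $\tT_{-1}=\tT$) gives, by density, that $\tT_{-1}$ commutes with $\aA_{-1}^{-1}$. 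Iterating verbatim --- using that $\aA_{-n-1}$ extends $\aA_{-n}$, so $\aA_{-n-1}^{-1}=\aA_{-n}^{-1}$ on $\xX_{-n}$, together with Remark~\ref{rem:iter-1} --- yields $\tT_{-n}\in\LLL(\xX_{-n})$ commuting with $\aA_{-n}^{-1}$; that $\tT_{-n}$ restricts $\tT_{-n-1}$ and is uniquely determined is built into the construction.

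Now part~(b), which also finishes~(a). With $\tT$ as in~(b) and $\tT_{-1}$ as above, fix $x\in X_0$; then $\aA_{-1}^{-1}x=A^{-1}x\in\dom(A)\subseteq\xX_0$, so
\[
\aA_{-1}^{-1}\tT_{-1}x=\tT_{-1}\aA_{-1}^{-1}x=\tT(A^{-1}x),
\]
and since $\tT$ leaves $\dom(A)$ invariant, $\tT A^{-1}x\in\dom(A)$; applying $\aA_{-1}$ and recalling that $\aA_{-1}$ agrees with $A$ on $\dom(A)$ gives $\tT_{-1}x=A\tT A^{-1}x\in X_0$. This is the formula of~(b), it shows $X_0$ is $\tT_{-1}$-invariant, and $\tT_{-1}$ extends $\tT$ by construction; applied to $\tT=T\rest_{\xX_0}$ and using $\tT A^{-1}x=A^{-1}Tx$ it even gives $\tT_{-1}x=Tx$ for $x\in X_0$. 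Then, as $X_{-1}=\aA_{-2}(X_0)$ with $X_0\subseteq\xX_{-1}=\dom(\aA_{-2})$, while $\tT_{-2}$ commutes with $\aA_{-2}$ on $\xX_{-1}$ and $\tT_{-2}\rest_{\xX_{-1}}=\tT_{-1}$, we obtain $\tT_{-2}(X_{-1})=\aA_{-2}\tT_{-1}(X_0)\subseteq\aA_{-2}(X_0)=X_{-1}$, and writing $y=A_{-1}x\in X_{-1}$ the same chain shows $T_{-1}:=\tT_{-2}\rest_{X_{-1}}=A_{-1}TA_{-1}^{-1}\in\LLL(X_{-1})$ with $\|T_{-1}\|=\|T\|$; replacing $A$ by $A_{-n}$ and using Remark~\ref{rem:iter-1} this propagates by induction to all $X_{-n}$. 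Finally, each $\aA_{-n}\colon\xX_{-n+1}\to\xX_{-n}$ is an isometric isomorphism (Proposition~\ref{prop:intertwine+1}), and $\tT_{-n}\aA_{-n}^{-1}=\aA_{-n}^{-1}\tT_{-n}$ together with $\tT_{-n}\rest_{\xX_{-n+1}}=\tT_{-n+1}$ says that $\aA_{-n}$ intertwines $\tT_{-n+1}$ and $\tT_{-n}$; composing these isometries shows all $\tT_n$ with $n\le-1$ are similar, and the identical argument with $A_{-n}$ in place of $\aA_{-n}$ handles the $T_n$.

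The only non-formal point is the $X_0$-invariance of $\tT_{-1}$ in~(b): because $\xX_0$ is in general not $\|\cdot\|$-dense in $X_0$, it cannot be read off by approximation, and one really needs the explicit identity $\tT_{-1}x=A\tT A^{-1}x$, which rests on the commutation with $\aA_{-1}^{-1}$ and on the invariance of $\dom(A)$ under $\tT$ --- and this same identity is exactly what drives the passage from the $\xX$-scale to the $X$-scale in~(a). Everything else is bookkeeping with the isometric isomorphisms and with uniqueness of continuous extensions from dense subspaces.
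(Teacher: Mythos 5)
Your proof is correct, and the computational core coincides with the paper's, but the logical organization is genuinely different. The paper proves (a) self-contained: it extends $T$ directly from $X_0$ (dense in $\xX_{-1}$ by Proposition \ref{prop:altercompl}), so that $\tT_{-1}\rest_{X_0}=T$ holds by construction, and it obtains the invariance of $X_{-1}$ under $\tT_{-2}$ from the computation $\tT_{-2}\aA_{-2}y=\aA_{-2}\tT_{-2}y=\aA_{-2}Ty$ for $y\in X_0$; part (b) is then proved afterwards by an explicit approximation argument (take $x_n\in\xX_0$ with $x_n\to x$ in $\xX_{-1}$ and pass to the limit in $\tT A^{-1}x_n=A^{-1}\tT x_n$). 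You invert this dependency: you restrict $T$ to $\xX_0$ first, extend $\tT$ along the underlined scale, derive (b) as a one-line consequence of the commutation $\tT_{-1}\aA_{-1}^{-1}=\aA_{-1}^{-1}\tT_{-1}$ (so the approximation is absorbed into the density argument for that commutation rather than redone), and only then use (b) to recover $\tT_{-1}\rest_{X_0}=T$ and the invariance of the $X_{-n}$. Your route yields a cleaner proof of (b) and makes explicit that the identity $\tT_{-1}x=A\tT A^{-1}x$ is what drives the passage from the $\xX$-scale to the $X$-scale; the paper's route spares you the extra step of checking that the extension of $T\rest_{\xX_0}$ agrees with $T$ on all of $X_0$. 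The essential identities --- the isometry estimate $\|A^{-1}Tx\|=\|TA^{-1}x\|$, the commutation of the extension with $\aA_{-n}^{-1}$ by density, and the conjugation $T_{-1}=A_{-1}TA_{-1}^{-1}$ giving boundedness and similarity at once --- are the same in both arguments, and your preliminary observations (that commuting with $A^{-1}$ forces invariance of $\dom(A)$ and hence of $\xX_0$, and that $\aA_{-1}^{-1}x=A^{-1}x$ for $x\in X_0$) are all justified.
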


\begin{proof}(a)
For $x\in X_0$ we have $\|Tx\|_{X_{-1}(A)}=\|A^{-1}Tx\|=\|TA^{-1}x\|\leq \|T\|\cdot \|A^{-1}x\|=\|T\|\cdot \|x\|_{X_{-1}(A)}$. So that $T:(X_0,\|\cdot\|_{X_{-1}(A)})\to (X_0,\|\cdot\|_{X_{-1}(A)})$ is continuous, and hence has a unique continuous extension $\tT_{-1}$ to $\xX_{-1}$. This extension commutes with $\aA^{-1}_{-1}$, because $T$ commutes with $A^{-1}$ and $\aA_{-1}^{-1}$ is the unique continuous extension of $A^{-1}$. By iteration we obtain the continuous extensions $T_{-n}$ onto $\xX_{-n}$, which then all commute with the corresponding $\aA_{-n}^{-1}$. By construction $\tT_{-n}$ is a restriction of $\tT_{-n-1}$. We prove that $X_{-1}$ is invariant under $\tT_{-2}$. Let $x\in X_{-1}$, hence $x=\aA_{-2}y$ for some $y\in X_0$. Then $Ty=\tT_{-2}y=\tT_{-2}\aA_{-2}^{-1}x=\aA_{-2}^{-1}\tT_{-2}x$, hence $\tT_{-2}x=\aA_{-2}Ty\in X_{-1}$, i.e., the invariance of $X_{-1}$ is proved. We have for $x\in X_{-1}$ that $\|T_{-1}x\|_{X_{-1}}=\|A_{-2}^{-1}T_{-1}x\|=\|A_{-2}^{-1}\tT_{-2}x\|=\|\tT_{-2}A_{-2}^{-1}x\|\leq \|\tT_{2}\|\cdot \|\aA_{-2}^{-1}x\|=\|\tT_{2}\|\cdot \|x\|_{X_{-1}}$, therefore $T_{-1}\in \LLL(X_{-1})$. The assertion about $T_{-n}$ follows by recursion.

\medskip It is enough to prove the similarity of $T_{0}=T$ and $T_{-1}$, and the similarity of $\tT_{0}$ and $\tT_{-1}$. The latter assertions can be proved as follows: For $x\in \dom(A)$ we have
\[
\aA_{-1}^{-1}\tT_{-1}\aA_{-1}x=\aA_{-1}^{-1}\tT_{-1}Ax=\aA_{-1}^{-1}TAx=\aA_{-1}^{-1}ATx=\aA_{-1}^{-1}A_{-1}Tx=\tT x,
\]
then by continuity and denseness the equality follows even for $x\in \xX_0$. For the similarity of $T$ and $T_{-1}$ take $x\in X_0$. Then 
\[
A_{-1}^{-1}T_{-1}A_{-1}x=\aA_{-2}^{-1}\tT_{-2}\aA_{-2}x=\tT_{-1}x=T x.
\]

\medskip\noindent (b) Let $x\in X_0\subseteq \xX_{-1}$. Then there is a sequence $(x_n)$ in $\xX_0$ with $x_n\to x$ in $\xX_{-1}$ (see Proposition \ref{prop:altercompl}). But then $A^{-1}x_n\to A^{-1}x$ in $\xX_0$ and $\tT x_n\to \tT_{-1}x$ in $\xX_{-1}$ by part (a). These imply $\tT A^{-1}x_n=A^{-1}\tT x_n\to \aA^{-1}_{-1}\tT_{-1}x$. Hence we conclude $\tT A^{-1}x=\aA^{-1}_{-1}\tT_{-1}x$ and $A\tT A^{-1}x=\tT_{-1}x$ for $x\in X_0$.
\end{proof}

Haase in \cite{Haase} and Wegner in \cite{W} have constructed the so-called \emph{universal extrapolation space} $\xX_{-\infty}$ as follows: Suppose $A$ is densely defined (this assumption is \emph{not} posed by Haase), then $X_n=\xX_n$ for each $n\in \ZZ$ and let $X_{-\infty}$ to be the inductive limit of the sequence of Banach spaces $(X_{-n})_{n\in\NN}$ (algebraic inductive limit in \cite{Haase}).
One can extend the operator $A$ to an operator $A_{-\infty}:X_{\infty}\to X_\infty$, for which
\[
A_{-\infty}\rest_{X_n}=A_n,\quad n\in\mathbb{Z}.
\]
We now look at a converse situation, and our starting point is the following: Let $\D$ be a locally convex space and suppose that we have a continuous operator $\A:\D\to\D$ such that we can embed the Banach space $X_0$ continuously in $\D$, i.e., there is a continuous injective map $i:X_0\to\D$, and so we can identify $X_0$ with a subspace of $\D$. We also assume that $\lambda-\A:i(X_0)\to\D$ is injective and that
\[
\dom(A)=\{x\in X_0:\ \A\circ i(x)\in i(X_0)\},
\]
and
\[
i\circ A=\A\circ i|_{\dom(A)}.
\]
As a matter of fact, this setting can be also used to construct the extrapolation spaces $\xX_{-n}$, $X_{-n}$ for $n\in \NN$ similarly to our Definition \ref{def:extra}, as indicated by the next theorem, which is proved here based on the results in Section \ref{sec:invert}. Notice that we do not assume that $A$ is a Hille--Yosida operator or densely defined.

\begin{theorem}\label{thm:iden}
Let $X_0$ be a Banach space with a continuous embedding $i:X_0\to\D$ into a locally convex space $\D$, let $A:\dom(A)\to X_0$ be a linear operator with $\lambda\in\rho(A)$ such that $A=\A_{|X_0}$ (after identifying $X_0$ with a subspace of $\D$ as described above). We suppose furthermore that $\lambda-\A$ is injective on $X_0$. There is a continuous embedding $i_{-1}:X_{-1}\to\D$ which extends $i$. After identifying $X_{-1}$ with a subspace of $\D$ (under $i_{-1}$) we have
\begin{align*}
X_{-1}=\{(\lambda-\A)x:\ x\in X_0\},\quad \underline{X}_{-1}=\{(\lambda-\A)x:\ x\in\underline{X_0}\}\quad \text{and}\quad A_{-1}=\A\rest_{X_{-1}}.
\end{align*}
\end{theorem}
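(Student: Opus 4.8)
The plan is to construct $i_{-1}$ by an explicit formula and then verify each claimed property in turn, quoting the structural facts of Section~\ref{sec:invert}. First I would reduce to $\lambda=0$: since by the convention of Section~\ref{sec:invert} the spaces $X_{-1},\xX_{-1}$ and the operator $A_{-1}$ for $A$ are built from the boundedly invertible operator $\lambda-A$, and since $\lambda-\A$ is again a continuous operator on $\D$ which restricts to $\lambda-A$ on $X_0$ and is injective there by hypothesis, it suffices to treat the case $0\in\rho(A)$, $A=\A|_{X_0}$, $\A$ injective on $i(X_0)$, with the displayed sets replaced by $\A(i(X_0))$ and $\A(i(\xX_0))$ (the general case is recovered by running the argument for $\lambda-A$, $\lambda-\A$). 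Recall from Definition~\ref{def:extra} and the proposition following it that $A_{-1}\colon X_0\to X_{-1}$ is an isometric isomorphism which extends $A$ — so that $A_{-1}^{-1}\colon X_{-1}\to X_0$ extends $A^{-1}$ — and also extends $\aA_{-1}\colon\xX_0\to\xX_{-1}$. With these facts available I would put
\[
i_{-1}\colon X_{-1}\longrightarrow\D,\qquad i_{-1}(w):=\A\bigl(i(A_{-1}^{-1}w)\bigr);
\]
that is, $w$ is pulled back to $X_0$ along the extrapolated inverse, embedded into $\D$ via $i$, and then $\A$ is applied.

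Next I would check the three basic properties. (a)~$i_{-1}$ extends $i$: for $w=y\in X_0\subseteq X_{-1}$ we have $A_{-1}^{-1}y=A^{-1}y\in\dom(A)$, hence $i_{-1}(y)=\A\bigl(i(A^{-1}y)\bigr)=i\bigl(AA^{-1}y\bigr)=i(y)$ by the intertwining $\A\circ i=i\circ A$ on $\dom(A)$. (b)~$i_{-1}$ is continuous: it is the composite of the bounded operator $A_{-1}^{-1}$, the continuous map $i$ and the continuous operator $\A$; quantitatively, for a continuous seminorm $p$ on $\D$ there are a continuous seminorm $q$ on $\D$ and constants with $p\bigl(i_{-1}(w)\bigr)\le C\,q\bigl(i(A_{-1}^{-1}w)\bigr)\le CC'\,\|A_{-1}^{-1}w\|=CC'\,\|w\|_{X_{-1}}$. (c)~$i_{-1}$ is injective: $i_{-1}(w)=0$ means $\A$ sends $i(A_{-1}^{-1}w)\in i(X_0)$ to $0$, so injectivity of $\A$ on $i(X_0)$ gives $i(A_{-1}^{-1}w)=0$, whence $A_{-1}^{-1}w=0$ and $w=0$.

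It then remains to read off the identifications. As $A_{-1}^{-1}$ maps $X_{-1}$ bijectively onto $X_0$, $i_{-1}(X_{-1})=\{\A(i(x)):x\in X_0\}=\A(i(X_0))$, i.e.\ $X_{-1}=\{(\lambda-\A)x:x\in X_0\}$ after the identification (and after undoing the reduction); and since $A_{-1}$ extends $\aA_{-1}$, for $w\in\xX_{-1}$ one has $A_{-1}^{-1}w=\aA_{-1}^{-1}w\in\xX_0$, so $i_{-1}(\xX_{-1})=\A(i(\xX_0))$. Finally, $\dom(A_{-1})=X_0$ is already fixed by the construction of $X_{-1}$, so ``$A_{-1}=\A|_{X_{-1}}$'' amounts to the identity $i_{-1}(A_{-1}w)=\A\bigl(i_{-1}(w)\bigr)$ for $w\in X_0$, which is immediate: $i_{-1}(A_{-1}w)=\A\bigl(i(A_{-1}^{-1}A_{-1}w)\bigr)=\A\bigl(i(w)\bigr)=\A\bigl(i_{-1}(w)\bigr)$.

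The single step that really calls for care is the choice of $i_{-1}$ together with part~(a): there the compatibility $\A\circ i=i\circ A$ on $\dom(A)$ and the (easily overlooked) fact that the extrapolated operator $A_{-1}$ of Definition~\ref{def:extra} genuinely extends $A$ both come into play, while the injectivity hypothesis on $\lambda-\A$ is used precisely once, in~(c). Everything else is bookkeeping resting on the isometric-isomorphism statements of Section~\ref{sec:invert}.
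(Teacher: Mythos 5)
Your proposal is correct and follows essentially the same route as the paper: you define $i_{-1}:=\A\circ i\circ A_{-1}^{-1}$ exactly as in the paper's proof and derive everything from the resulting relation $i_{-1}\circ A_{-1}=\A\circ i$. The only difference is that you spell out the verifications (extension of $i$, continuity, injectivity, and the three identifications) that the paper compresses into ``all the assertions follow from this.''
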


\begin{proof}
Without lost of generality we may assume that $\lambda=0$. Recall that ${A_{-1}}\rest_{X_0}=A$ and $A_{-1}$ is an isometric isomorphism $A_{-1}:X_0\to X_{-1}$. We now define the embedding $i_{-1}:X_{-1}\to\D$ by
\[
i_{-1}:=\A\circ i\circ A_{-1}^{-1},
\]
which is indeed injective and continuous by assumption. Of course, $i_{-1}$ extends $i$ since we have $i=\A\circ i\circ A^{-1}$. We can write
\[
i_{-1}\circ A_{-1}=\A\circ i\circ A_{-1}^{-1}\circ A_{-1}=\A\circ i,
\]
which yields the following commutative diagram:
\begin{align*}
\xymatrix 
{
X_0\ar[rr]^{i}\ar[dd]^{A_{-1}} & & \D\ar[dd]_{\A} \\
 & &\\
X_{-1}\ar[rr]_{i_{-1}} & & \D
}
\end{align*}
All the assertions follow from this.
\end{proof}

The last corollary in this s can be proved by recursion based on the facts proved in Section \ref{sec:invert}.

\begin{corollary}
Let $\A$, $X_0$ and $\D$ as in Theorem \ref{thm:iden}. Then $X_{n}\subseteq\D$ and $A_{n}=\A\rest_{X_{-n}}$ for each $n\in\ZZ$ (identifying $X_n$ under an embedding $i_n$ with a subspace of $\D$).
\end{corollary}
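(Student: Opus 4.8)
The plan is to prove the corollary by induction on $n$, treating the cases $n \geq 0$ and $n \leq 0$ separately, with Theorem \ref{thm:iden} providing the crucial step $n=0 \rightsquigarrow n=-1$ and Remark \ref{rem:iter-1} furnishing the mechanism for iteration. First I would dispose of the case $n\in\NN$: for $n\geq 1$ the spaces $X_n=\dom(A^n)$ and $\xX_n=\dom(\aA^n)$ already sit inside $X_0$, hence inside $\D$ via the given embedding $i$, so we set $i_n:=i|_{X_n}$ (and likewise for $\xX_n$). The identity $A_n=\A\rest_{X_n}$ is then immediate from $A=\A\rest_{X_0}$ together with the description $\dom(A)=\{x\in X_0:\A\circ i(x)\in i(X_0)\}$ and the recursive characterization $X_1(A^n)=X_n(A)$ recorded in Remark \ref{rem:1}.

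Next I would handle the negative indices by induction. The base case $n=-1$ is exactly Theorem \ref{thm:iden} (applied with the reduction $\lambda=0$, so that $\A$ itself plays the role of $\lambda-\A$): it yields the embedding $i_{-1}:X_{-1}\to\D$ extending $i$, the identification $X_{-1}=\A(X_0)$, and $A_{-1}=\A\rest_{X_{-1}}$. For the inductive step, suppose $i_{-n}:X_{-n}\to\D$ is a continuous embedding extending $i$ with $A_{-n}=\A\rest_{X_{-n}}$ and the analogous statement for $\xX_{-n}$. By Remark \ref{rem:iter-1} we have $X_{-1}(A_{-n})=X_{-(n+1)}(A)$ and $\xX_{-1}(\aA_{-n})=\xX_{-(n+1)}(\aA)$, so one applies Theorem \ref{thm:iden} afresh with $X_0$ replaced by $X_{-n}$, with the Banach space embedded in $\D$ via $i_{-n}$, and with the operator $A_{-n}$ in the role of $A$ (noting $\rho(A_{-n})\ni 0$ since $A_{-n}:X_{-n+1}\to X_{-n}$ is an isometric isomorphism by Proposition \ref{prop:intertwine+1}, and that $A_{-n}=\A\rest_{X_{-n}}$ means $\A$ on $i_{-n}(X_{-n})$ genuinely extends $A_{-n}$). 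The theorem then produces $i_{-(n+1)}:X_{-(n+1)}\to\D$ extending $i_{-n}$ (hence extending $i$), together with $X_{-(n+1)}=\A(X_{-n})$ and $A_{-(n+1)}=\A\rest_{X_{-(n+1)}}$, and simultaneously the underlined versions. This closes the induction.

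The one point requiring a little care — and the place I expect to be the main obstacle — is verifying that the hypotheses of Theorem \ref{thm:iden} are genuinely satisfied at each stage of the iteration, i.e.\ that $\A$ on $i_{-n}(X_{-n})$ really is injective (which follows since $\A$ extends the isomorphism $A_{-n}$ on the dense subspace and the completions are defined so that $\A=\aA_{-(n+1)}$ is an isometric isomorphism on the relevant extrapolation space — compare the construction preceding Definition \ref{def:extra}), and that the domain description $\dom(A_{-n})=\{x\in X_{-n}:\A\circ i_{-n}(x)\in i_{-n}(X_{-n})\}$ holds. The latter is where the bottom-to-top-and-back construction of $X_{-1}$ via $\xX_{-2}$ matters: one must check the identification is compatible, but this is precisely what the commutative diagram in the proof of Theorem \ref{thm:iden} encodes, and it transfers verbatim along the chain $\xX_0\inj X_0\inj\xX_{-1}\inj X_{-1}\inj\xX_{-2}$. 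Everything else is a routine unwinding of the isometric isomorphisms $A_n:X_{n+1}\to X_n$ and $\aA_n:\xX_{n+1}\to\xX_n$ from Proposition \ref{prop:intertwine+1}, so no further calculation beyond bookkeeping the embeddings is needed.
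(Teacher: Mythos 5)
Your induction is precisely the argument the paper intends: the text offers no written proof beyond the single remark that the corollary ``can be proved by recursion based on the facts proved in Section \ref{sec:invert}'', and your two-part scheme (direct inclusion $X_n\subseteq X_0\subseteq\D$ for $n\geq 0$, then iterating Theorem \ref{thm:iden} via Remark \ref{rem:iter-1} for $n<0$) is exactly that recursion, spelled out. Your flagged concern about re-verifying the injectivity hypothesis of $\lambda-\A$ on $i_{-n}(X_{-n})$ at each stage is a fair one that the paper silently glosses over, and your resolution via the isometric isomorphisms of Proposition \ref{prop:intertwine+1} is adequate, so the proposal is correct and matches the paper's approach.
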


\section{Intermediate spaces for operators with rays of minimal growth}\label{sec:minimal}
The following definition of intermediate, and as a matter of fact interpolation spaces, just as well many results in this section are standard, and we refer, e.g., to the book of Lunardi \cite[Chapter 3]{Lunardi}, and to Engel, Nagel \cite[Section II.5]{EN} for the case of semigroup generators.
\begin{definition}\label{def:Fav}
Let $A$ be a linear operator on the Banach space $X_0$ with a ray of minimal growth, i.e., suppose that $(0,
\infty)\subseteq\rho(A)$ and for some $M\geq 0$
\begin{equation}\label{eq:weakHY}
\|\lambda R(\lambda,A)\|\leq M\quad\text{for all $\lambda>0$}.
\end{equation}
For $\alpha\in\left(0,1\right]$ and $x\in X_0$ we define
\[
\|x\|_{\Fav_\alpha(A)}:=\sup_{\lambda>0}\|\lambda^\alpha AR(\lambda,A)x\|,
\]
and the abstract Favard space of order $\alpha$
\[
\Fav_\alpha(A):=\bigl\{x\in X_0:\|x\|_{\Fav_\alpha(A)}<\infty\bigr\}.
\]
In the literature the notation $D_A(\alpha,\infty)$ is more common, see, e.g., \cite{Lunardi}, but for notational convenience we stick to our notation in this paper.
We further set
\[
\Fav_0(A)=\Fav_1(A_{-1}),
\]
see \cite[Section II.5(b)]{EN} for the case of semigroup generators. 
\end{definition}
The standing assumption in this section will be that $A$ satisfies \eqref{eq:weakHY}.
\begin{proposition}
\begin{abc}
\item
The Favard space $\Fav_\alpha(A)$ becomes a Banach space if endowed with the norm $\|\cdot\|_{\Fav_\alpha(A)}$.
\item $X_0$ is isomorphic to a closed subspace of $\Fav_0(A)$.
\end{abc}
\end{proposition}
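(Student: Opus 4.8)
The plan is to treat (a) first, and then extract from it a workable representation of $\Fav_0(A)$ that reduces (b) to a statement about the resolvent family.

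\medskip\noindent\emph{Proof of (a).} The first step is to show that the inclusion $\Fav_\alpha(A)\inj X_0$ is continuous; this simultaneously proves that $\|\cdot\|_{\Fav_\alpha(A)}$ is a genuine norm (homogeneity and the triangle inequality being immediate from the defining supremum). Fix $\lambda>0$. Since $R(\lambda,A)x\in\dom(A)$ we may write $R(\lambda,A)x=A^{-1}AR(\lambda,A)x$, whence $\|R(\lambda,A)x\|\le\|A^{-1}\|\,\|AR(\lambda,A)x\|\le\|A^{-1}\|\lambda^{-\alpha}\|x\|_{\Fav_\alpha(A)}$. Plugging this into $x=\lambda R(\lambda,A)x-AR(\lambda,A)x$ gives $\|x\|\le\bigl(\|A^{-1}\|\lambda^{1-\alpha}+\lambda^{-\alpha}\bigr)\|x\|_{\Fav_\alpha(A)}$ for every $\lambda>0$; minimising the constant over $\lambda\in(0,\infty)$ yields $\|x\|\le C_\alpha\|x\|_{\Fav_\alpha(A)}$ with $C_\alpha<\infty$, and in particular $\|x\|_{\Fav_\alpha(A)}=0$ forces $x=0$. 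For completeness, take a $\|\cdot\|_{\Fav_\alpha(A)}$-Cauchy sequence $(x_n)$. By the embedding it is $\|\cdot\|$-Cauchy, so $x_n\to x$ in $X_0$. For each fixed $\lambda$ the operator $\lambda^\alpha AR(\lambda,A)$ is bounded on $X_0$, hence $\lambda^\alpha\|AR(\lambda,A)x\|=\lim_n\lambda^\alpha\|AR(\lambda,A)x_n\|\le\sup_n\|x_n\|_{\Fav_\alpha(A)}<\infty$; taking the supremum over $\lambda$ shows $x\in\Fav_\alpha(A)$. Finally, given $\varepsilon>0$ pick $N$ with $\|x_n-x_m\|_{\Fav_\alpha(A)}<\varepsilon$ for $n,m\ge N$; then for each $\lambda$ and each $n\ge N$, $\lambda^\alpha\|AR(\lambda,A)(x_n-x)\|=\lim_m\lambda^\alpha\|AR(\lambda,A)(x_n-x_m)\|\le\varepsilon$, so $\|x_n-x\|_{\Fav_\alpha(A)}\le\varepsilon$. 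Thus $\Fav_\alpha(A)$ is a Banach space.

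\medskip\noindent\emph{Proof of (b).} Recall $\Fav_0(A)=\Fav_1(A_{-1})$, where $A_{-1}\colon X_0\to X_{-1}$ is an isometric isomorphism with $\dom(A_{-1})=X_0$. Since $A_{-1}$ is similar to $A$ via this isometric isomorphism, it has a ray of minimal growth with the same constant $M$ and $0\in\rho(A_{-1})$; so by part (a), $\Fav_0(A)$ is a Banach space. Now let $x\in X_0=\dom(A_{-1})$. Then $A_{-1}R(\lambda,A_{-1})x=R(\lambda,A_{-1})A_{-1}x$, the resolvent commutes with $A_{-1}^{-1}$, $R(\lambda,A_{-1})x=R(\lambda,A)x$ because $A_{-1}$ extends $A$, and the $X_{-1}$-norm is $\|y\|_{X_{-1}}=\|A_{-1}^{-1}y\|_{X_0}$; combining these,
\[
\lambda\,\|A_{-1}R(\lambda,A_{-1})x\|_{X_{-1}}=\lambda\,\|R(\lambda,A_{-1})A_{-1}x\|_{X_{-1}}=\lambda\,\|R(\lambda,A_{-1})x\|_{X_0}=\|\lambda R(\lambda,A)x\|_{X_0}.
\]
Hence $\|x\|_{\Fav_0(A)}=\sup_{\lambda>0}\|\lambda R(\lambda,A)x\|_{X_0}\le M\|x\|_{X_0}$, which gives the continuous inclusion $X_0\inj\Fav_0(A)$. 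For the reverse estimate one uses $\lambda R(\lambda,A)x\to x$ as $\lambda\to\infty$, so that $\|x\|_{X_0}=\lim_{\lambda\to\infty}\|\lambda R(\lambda,A)x\|_{X_0}\le\|x\|_{\Fav_0(A)}$. Consequently $\|\cdot\|_{X_0}$ and $\|\cdot\|_{\Fav_0(A)}$ are equivalent on $X_0$; since $X_0$ is complete for $\|\cdot\|_{X_0}$, it is complete, hence a closed subspace, in $\Fav_0(A)$.

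\medskip\noindent The step I expect to do the real work is the lower bound $\|x\|_{X_0}\le\|x\|_{\Fav_0(A)}$ in (b). The identity $\|x\|_{\Fav_0(A)}=\sup_\lambda\|\lambda R(\lambda,A)x\|_{X_0}$ reduces it to showing that the family $\bigl(\lambda R(\lambda,A)\bigr)_{\lambda>0}$ does not shrink norms on $X_0$; the naive argument via $\lambda R(\lambda,A)x\to x$ is only legitimate on $\overline{\dom(A)}$, since $A$ need not be densely defined. For a general $x\in X_0$ one must recover $\|x\|_{X_0}$ from a norming family of functionals along which $\lambda R(\lambda,A)x$ converges to $x$ (concretely, point evaluations in the translation example on $\BC(\RR)$, where $\Fav_0=\Ell^{\infty}(\RR)$ and $\BC(\RR)$ sits in it as a closed subspace); dealing with this weak-type convergence, rather than the routine completeness bookkeeping, is where the argument is delicate.
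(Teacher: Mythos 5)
Part (a) of your proposal is correct and complete; the paper only records that it is ``an easy checking of properties'', and your route (the embedding $\Fav_\alpha(A)\inj X_0$ via $x=\lambda R(\lambda,A)x-AR(\lambda,A)x$, then the standard completeness bookkeeping using boundedness of $AR(\lambda,A)$ for fixed $\lambda$) is a perfectly good way to check them. In (b), your identity $\|x\|_{\Fav_0(A)}=\sup_{\lambda>0}\|\lambda R(\lambda,A)x\|$ and the resulting estimate $\|x\|_{\Fav_0(A)}\leq M\|x\|$ are exactly the paper's computation of the continuous inclusion $X_0\inj \Fav_0(A)$.

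The genuine gap is the one you flag yourself and then leave open: the lower bound $\|x\|\leq\sup_{\lambda>0}\|\lambda R(\lambda,A)x\|$ for $x\in X_0\setminus\xX_0$. This inequality is the whole content of (b) --- without it nothing prevents $\|\cdot\|_{\Fav_0(A)}$ from being strictly weaker than $\|\cdot\|$ on $X_0$, in which case $X_0$ need not be closed in $\Fav_0(A)$ --- and, as you observe, it does not follow from $\lambda R(\lambda,A)x\to x$, which is available only on $\xX_0=\overline{\dom(A)}$. Your worry is well founded: the paper's own argument transports the problem to $X_{-1}$, writing $\|x\|=\|A_{-1}x\|_{X_{-1}}=\bigl\|\lim_{\lambda\to\infty}\lambda R(\lambda,A_{-1})A_{-1}x\bigr\|_{X_{-1}}\leq\sup_{\lambda>0}\|\lambda A_{-1}R(\lambda,A_{-1})x\|_{X_{-1}}$, but that norm limit requires $A_{-1}x\in\overline{\dom(A_{-1})}^{\|\cdot\|_{X_{-1}}}=\xX_{-1}=\aA_{-1}(\xX_0)$, i.e.\ again $x\in\xX_0$; so the delicate point is merely relocated, not resolved. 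To close it one must use structure beyond the bare ray of minimal growth under which the proposition is stated: a norming set $Y\subseteq X_0'$ with $\dprod{\lambda R(\lambda,A)x}{y}\to\dprod{x}{y}$ for every $y\in Y$ (in the bi-continuous/Saks-space setting one takes $Y=(X_0,\tau)'$, since $\lambda R(\lambda,A)x\to x$ in $\tau$ on norm-bounded sets), whence $|\dprod{x}{y}|=\lim_{\lambda\to\infty}|\dprod{\lambda R(\lambda,A)x}{y}|\leq\|y\|\,\sup_{\lambda>0}\|\lambda R(\lambda,A)x\|$ and taking the supremum over $y\in Y$ with $\|y\|\leq1$ gives $\|x\|\leq\|x\|_{\Fav_0(A)}$. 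That is precisely the ``point evaluations'' argument you sketch for $\BC(\RR)$; written out in this abstract form it completes your proof (alternatively, restrict to Hille--Yosida operators and invoke Nagel--Sinestrari, as the paper does for that case). As it stands, the key inequality is announced but not proved.
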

For the case when $A$ is a Hille--Yosida operator, The statement that $X_0$ is closed subspace of $\Fav_0(A)$ is due to Nagel and Sinestrari \cite[Proof of Prop. 2.7]{NS} 

\begin{proof}
(a) is an easy checking of properties.

\medskip\noindent (b) For $x\in X_0$ we have
\[\|\lambda A_{-1}R(\lambda,A_{-1})x\|_{X_{-1}(A)}=\|\lambda A R(\lambda,A)x\|_{X_{-1}(A)}=\|\lambda A^{-1}A R(\lambda,A)x\|\leq M\|x\|,\]
yielding 
\[
\|x\|_{\Fav_0(A)}=\|x\|_{\Fav_1(A_{-1})}\leq M\|x\|.
\]
On the other hand, since $A$ and $A_{-1}$ are similar, we have $\sup_{\lambda>0}\|\lambda R(\lambda,A_{-1})\|\leq M'$ for some $M'\geq0$ and for all $\lambda>0$. In particular, by Remark \ref{rem:2}, $\lambda R(\lambda, A_{-1})x\to x$ for each $x\in \xX_{-1}$. From which we obtain for $x\in X_0$ that
\begin{align*}
\|x\|&=\|A_{-1}x\|_{X_{-1}(A)}=\Bigl\|\lim_{\lambda\to 0} \lambda R(\lambda,A_{-1})A_{-1}x\Bigr\|_{X_{-1}(A)}\leq \sup_{\lambda>0}\|\lambda A_{-1}R(\lambda,A_{-1})x\|_{X_{-1}(A)}\\
&=\|x\|_{\Fav_1(A_{-1})}=\|x\|_{\Fav_0(A)},
\end{align*}
showing the equivalence of norms $\|\cdot\|$ and $\|x\|_{\Fav_0(A)}$ on $X_0$.
\end{proof}

We will also need the following well-known result, see, e.g., \cite[Chapters 1 and 3]{Lunardi}, for which we give a direct, short proof.
\begin{proposition}
For $\alpha\in (0,1]$ we have $\Fav_\alpha(A)\subseteq \overline{\dom(A)}=\xX_0$.
\end{proposition}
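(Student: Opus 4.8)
The plan is to show that every element of a Favard space is a limit (in the norm of $X_0$) of elements of $\dom(A)$, by exhibiting the approximants explicitly through the resolvent. Fix $\alpha\in(0,1]$ and $x\in\Fav_\alpha(A)$, so that $M_x:=\|x\|_{\Fav_\alpha(A)}=\sup_{\lambda>0}\|\lambda^\alpha AR(\lambda,A)x\|<\infty$. For $\lambda>0$ set $x_\lambda:=\lambda R(\lambda,A)x$. Each $x_\lambda$ lies in $\dom(A)$ since $R(\lambda,A)$ maps $X_0$ into $\dom(A)$, so it suffices to prove that $x_\lambda\to x$ in $X_0$ as $\lambda\to\infty$.

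The key computation is the identity $\lambda R(\lambda,A)x-x=R(\lambda,A)Ax$ valid for $x\in\dom(A)$, but here $x$ need not be in $\dom(A)$; instead one uses the algebraic identity $\lambda R(\lambda,A)-I=AR(\lambda,A)$ on all of $X_0$. Thus
\[
\|x_\lambda-x\|=\|AR(\lambda,A)x\|=\lambda^{-\alpha}\,\|\lambda^\alpha AR(\lambda,A)x\|\le \lambda^{-\alpha}M_x,
\]
which tends to $0$ as $\lambda\to\infty$ precisely because $\alpha>0$. Hence $x\in\overline{\dom(A)}=\xX_0$, as claimed.

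I would present it essentially in that order: first recall $R(\lambda,A)X_0\subseteq\dom(A)$ so the approximants are admissible; then invoke the resolvent identity $\lambda R(\lambda,A)-I=AR(\lambda,A)$ on $X_0$; then read off the estimate from the very definition of $\|\cdot\|_{\Fav_\alpha(A)}$; and finally let $\lambda\to\infty$. There is no real obstacle here — the statement is soft — the only point to be slightly careful about is that one must not assume $x\in\dom(A)$ when manipulating $\lambda R(\lambda,A)x-x$, which is why the algebraic form $AR(\lambda,A)=\lambda R(\lambda,A)-I$ (rather than $R(\lambda,A)A$) is the right tool. Note also that the ray-of-minimal-growth bound \eqref{eq:weakHY} is not actually needed for this particular inclusion, only for $\Fav_\alpha(A)$ to be well defined as in Definition \ref{def:Fav}; the decay comes entirely from the factor $\lambda^{-\alpha}$.
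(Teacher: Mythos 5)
Your proof is correct and follows essentially the same route as the paper: both use the identity $\lambda R(\lambda,A)x-x=AR(\lambda,A)x$ on all of $X_0$ together with the bound $\|AR(\lambda,A)x\|\le\lambda^{-\alpha}\|x\|_{\Fav_\alpha(A)}$ to conclude that $\lambda R(\lambda,A)x\in\dom(A)$ converges in norm to $x$. Your added remarks (why one must use $AR(\lambda,A)=\lambda R(\lambda,A)-I$ rather than $R(\lambda,A)A$, and that \eqref{eq:weakHY} is not needed for the estimate itself) are accurate but the underlying argument is identical.
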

\begin{proof}
We have 
\[
AR(\lambda,A)x=\lambda R(\lambda,A)x-x,
\]
so that
\[
\|\lambda R(\lambda,A)x-x\|\leq \frac{\|x\|_{\Fav_\alpha(A)}}{\lambda^\alpha}\to 0\quad \text{as $\lambda\to \infty$.}
\]
\end{proof}

\begin{definition}\label{def:XHol}
Let $A$ be a linear operator on the Banach space $X_0$ satisfying \eqref{eq:weakHY}. 
For $\alpha\in (0,1)$ we set 
\begin{align*}
\XHol_\alpha(A)&:=\Bigl\{x\in \Fav_\alpha(A):\lim_{\lambda\to\infty} \lambda^\alpha AR(\lambda,A)x=0\Bigr\},\\
\intertext{and we recall from Section \ref{sec:invert}}
\XHol_0(A)&:=\overline{\dom(A)},\quad \XHol_1(A)=\dom(A|_{\XHol_0(A)}).
\end{align*}
\end{definition}

The proof of the next proposition is straightforward, but also well-known.

\begin{proposition}
For $\alpha,\beta\in (0,1)$ with $\alpha>\beta$ we have 
\[
 \XHol_1(A)\inj \XHol_\alpha(A)\subseteq \Fav_\alpha(A) \inj \XHol_\beta(A)\subseteq \Fav_\beta(A)\inj \XHol_0(A)\subseteq X_0(A)
\]
with $\inj$ denoting continuous and dense embeddings of Banach spaces, and $\subseteq$ denoting inclusion of closed subspaces.
\end{proposition}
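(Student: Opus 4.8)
The plan is to establish the chain
\[
\XHol_1(A)\inj \XHol_\alpha(A)\subseteq \Fav_\alpha(A) \inj \XHol_\beta(A)\subseteq \Fav_\beta(A)\inj \XHol_0(A)\subseteq X_0(A)
\]
link by link, treating separately the three types of relations that appear: the inclusions $\subseteq$ of a ``little-Hölder'' space in its ``Favard'' counterpart, the continuous embeddings $\inj$ going from a higher-order space to a lower-order one, and the density of these embeddings.

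First, the inclusions $\XHol_\alpha(A)\subseteq\Fav_\alpha(A)$ and $\XHol_0(A)\subseteq X_0$ are immediate from Definitions \ref{def:XHol} and \ref{def:Fav}: by construction $\XHol_\alpha(A)$ is a subset of $\Fav_\alpha(A)$ carrying the same norm, and it is closed because the condition $\lim_{\lambda\to\infty}\lambda^\alpha AR(\lambda,A)x=0$ is stable under $\|\cdot\|_{\Fav_\alpha(A)}$-limits (a standard $\veps/2$ argument using $\sup_{\lambda>0}\|\lambda^\alpha AR(\lambda,A)\cdot\|=\|\cdot\|_{\Fav_\alpha(A)}$). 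That $\XHol_0(A)=\overline{\dom(A)}$ is closed in $X_0$ is trivial. Next, for the embeddings $\Fav_\alpha(A)\inj\XHol_\beta(A)$ with $\alpha>\beta$, and likewise $\XHol_1(A)\inj\XHol_\alpha(A)$ and $\Fav_\beta(A)\inj\XHol_0(A)$, I would use the resolvent identity and the uniform bound \eqref{eq:weakHY}. Concretely, writing $AR(\lambda,A)x=\lambda R(\lambda,A)x-x$ one gets for $x\in\Fav_\alpha(A)$ that $\|\lambda^\beta AR(\lambda,A)x\|\le \lambda^{\beta-\alpha}\|x\|_{\Fav_\alpha(A)}$, which both shows $\|x\|_{\Fav_\beta(A)}\le(\sup\dots)\,\|x\|_{\Fav_\alpha(A)}$ is finite (in fact one checks $\sup_{\lambda>0}\lambda^\beta\|AR(\lambda,A)x\|<\infty$ using boundedness of $AR(\lambda,A)$ for $\lambda$ near $0$ via \eqref{eq:weakHY}, and the decay $\lambda^{\beta-\alpha}\to0$ for large $\lambda$) and gives $\lambda^\beta AR(\lambda,A)x\to0$, i.e.\ $x\in\XHol_\beta(A)$; this simultaneously yields continuity of the embedding. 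The inclusion $\Fav_\beta(A)\inj\XHol_0(A)$ is already the content of the preceding proposition ($\Fav_\alpha(A)\subseteq\xX_0$), and continuity follows since $\|x\|\le M'\|x\|_{\Fav_\beta(A)}$ by the same resolvent manipulation (bounding $\|\lambda R(\lambda,A)x\|$ and letting $\lambda\to0$, as in the proof that $X_0$ embeds in $\Fav_0(A)$). For $\XHol_1(A)\inj\XHol_\alpha(A)$ one uses $\|\lambda^\alpha AR(\lambda,A)x\|=\|\lambda^\alpha R(\lambda,A)Ax\|\le M\lambda^{\alpha-1}\|Ax\|\to0$, giving both membership and the norm estimate $\|x\|_{\Fav_\alpha(A)}\le M\|x\|_{\XHol_1(A)}$ (after restricting to $\XHol_0(A)$ so that $Ax$ makes sense there).

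The genuinely non-routine part is \emph{density}. For $\inj$ to denote a dense embedding I must show: $\XHol_1(A)$ is dense in $\XHol_\alpha(A)$, $\Fav_\alpha(A)$ is dense in $\XHol_\beta(A)$ (hence also the larger $\XHol_\alpha\subseteq\Fav_\alpha$ is dense there), and $\Fav_\beta(A)$ is dense in $\XHol_0(A)=\xX_0$. The unifying device is the approximation $x_\mu:=\mu R(\mu,A)x$ for $\mu>0$: since $x_\mu\in\dom(A)\subseteq\XHol_1(A)$ and, by the resolvent commuting with itself, $\lambda^\gamma AR(\lambda,A)x_\mu=\mu R(\mu,A)\,\lambda^\gamma AR(\lambda,A)x$, one gets $\|x_\mu\|_{\Fav_\gamma(A)}\le M\|x\|_{\Fav_\gamma(A)}$ and, crucially, $x_\mu\to x$ in each of the relevant norms. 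For the $\Fav_\gamma$-norm convergence the standard trick is to split $\sup_\lambda$ into $\lambda\le R$ and $\lambda>R$: on $\lambda>R$ one uses smallness of $\lambda^\gamma AR(\lambda,A)x$ (if $x\in\XHol_\gamma$) or of $\lambda^{\gamma-\gamma'}$ (if $x$ lies in a higher-order Favard space), uniformly in $\mu$; on the compact range $\lambda\le R$ one uses that $\mu R(\mu,A)\to I$ strongly on $\xX_0$ as $\mu\to\infty$ (Remark \ref{rem:2}) together with continuity of $\lambda\mapsto\lambda^\gamma AR(\lambda,A)x$. Thus $\XHol_1(A)\ni x_\mu\to x$ in $\XHol_\alpha(A)$ handles the first density; for $\Fav_\alpha(A)$ dense in $\XHol_\beta(A)$ one notes $x_\mu\in\dom(A)\subseteq\Fav_\alpha(A)$ and $x_\mu\to x$ in $\Fav_\beta(A)$ by the same splitting (here the $\lambda>R$ tail is controlled because $x\in\XHol_\beta$, so $\lambda^\beta AR(\lambda,A)x\to0$); and for $\Fav_\beta(A)$ dense in $\xX_0$ one uses that already $\dom(A)\subseteq\Fav_\beta(A)$ is $\|\cdot\|$-dense in $\xX_0=\overline{\dom(A)}$. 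I expect the main obstacle to be making the $\lambda\le R$ versus $\lambda>R$ splitting genuinely uniform in the approximation parameter, i.e.\ choosing $R$ first (using decay of the tail) and only then exploiting strong convergence $\mu R(\mu,A)\to I$ on the now-fixed compact $\lambda$-range; care is also needed that $x_\mu$ indeed lies in $\xX_0$ and that all suprema are over $\lambda>0$ including behaviour near $0$, where \eqref{eq:weakHY} provides the uniform bound on $AR(\lambda,A)=\lambda R(\lambda,A)-I$.
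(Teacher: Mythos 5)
Your proof is correct, and its core estimate $\|\lambda^{\beta}AR(\lambda,A)x\|=\lambda^{\beta-\alpha}\|\lambda^{\alpha}AR(\lambda,A)x\|\leq\lambda^{\beta-\alpha}\|x\|_{\Fav_\alpha(A)}$ is exactly the one displayed in the paper, which then disposes of everything else with ``the other statements can be proved by similar reasonings.'' The only substantive material you add is the density argument via $x_\mu=\mu R(\mu,A)x$ with the split of $\sup_{\lambda}$ into $\lambda\leq R$ and $\lambda>R$ (choosing $R$ first from the tail decay, then letting $\mu\to\infty$ on the compact range), together with the observation that the norm on $\Fav_\beta(A)$ must be controlled near $\lambda=0$ by $\|AR(\lambda,A)\|\leq M+1$ rather than by the scaling identity; both points are handled correctly and simply fill in what the paper leaves implicit.
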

\begin{proof}
For $x\in \Fav_\alpha(A)$ we have
\[
\|\lambda^{\beta} AR(\lambda,A)x\|=\lambda^{\beta-\alpha}\|\lambda^{\alpha} AR(\lambda,A)x\|\leq \lambda^{\beta-\alpha}\|x\|_\alpha\to 0\quad\text{as $\lambda\to\infty$},
\]
and this proves also the continuity of $\Fav_\alpha(A)\inj \XHol_\beta(A)$.
The other statements can be proved by similar reasonings.
\end{proof}

\begin{proposition}
 \begin{abc}
 \item 
The spaces $\Fav_\alpha(A)$ and $\XHol_\alpha(A)$ are invariant under each $T\in\LLL(X_0)$ which commutes with $A^{-1}$.
\item Let $T\in\LLL(X_0)$ be commuting with $A^{-1}$. The space $\Fav_0(A)$ is invariant under $T_{-1}$.
\end{abc}
 \end{proposition}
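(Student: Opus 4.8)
The plan for part (a) is to first show that ``$T$ commutes with $A^{-1}$'' already entails ``$T$ commutes with $R(\lambda,A)$ for every $\lambda\in\rho(A)$'', and then to read off the invariance straight from the defining seminorms of the Favard and H\"older spaces. First I would recall (as in Section~\ref{sec:invert}) that $TA^{-1}=A^{-1}T$ on $X_0$ means $T$ leaves $\dom(A)$ invariant with $ATx=TAx$ for $x\in\dom(A)$; then for $\lambda\in\rho(A)$, in particular for $\lambda>0$, one gets $(\lambda-A)Tx=T(\lambda-A)x$ on $\dom(A)$, and applying $R(\lambda,A)$ on both sides (with $x=R(\lambda,A)y$, $y\in X_0$) yields $TR(\lambda,A)=R(\lambda,A)T$. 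Hence $T$ commutes with the bounded operator $\lambda^\alpha AR(\lambda,A)=\lambda^\alpha\bigl(\lambda R(\lambda,A)-I\bigr)$ for every $\lambda>0$. Then for $x\in\Fav_\alpha(A)$ one has
\[
\|\lambda^\alpha AR(\lambda,A)Tx\|=\|T\,\lambda^\alpha AR(\lambda,A)x\|\le\|T\|\cdot\|x\|_{\Fav_\alpha(A)},
\]
and taking the supremum over $\lambda>0$ gives $Tx\in\Fav_\alpha(A)$ with $\|Tx\|_{\Fav_\alpha(A)}\le\|T\|\cdot\|x\|_{\Fav_\alpha(A)}$. For $x\in\XHol_\alpha(A)$ with $\alpha\in(0,1)$, the same commutation and the continuity of $T$ give $\lambda^\alpha AR(\lambda,A)Tx=T\bigl(\lambda^\alpha AR(\lambda,A)x\bigr)\to 0$ as $\lambda\to\infty$, so $Tx\in\XHol_\alpha(A)$; the remaining cases $\XHol_0(A)=\xX_0$ and $\XHol_1(A)=\xX_1$ are already covered by the invariance statement for the spaces $X_n,\xX_n$ from Section~\ref{sec:invert}.

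For part (b) the plan is to reduce to part (a) by working with $A_{-1}$ and $T_{-1}$: since by definition $\Fav_0(A)=\Fav_1(A_{-1})$, it suffices to check that the pair $(A_{-1},T_{-1})$ satisfies the hypotheses of part (a) on the Banach space $X_{-1}$. Three things need verification. First, $T_{-1}\in\LLL(X_{-1})$ --- this is exactly Proposition~\ref{prop:extT}(a), where $T_{-1}$ is the restriction to $X_{-1}$ of the continuous extension $\tT_{-2}$ of $T$ to $\xX_{-2}$. Second, $A_{-1}$ satisfies the weak Hille--Yosida estimate~\eqref{eq:weakHY} (possibly with a larger constant), which was already observed via the similarity of $A$ and $A_{-1}$ in the proof that $X_0$ embeds into $\Fav_0(A)$. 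Third, $T_{-1}$ commutes with $(A_{-1})^{-1}$: since $(A_{-1})^{-1}=\aA_{-2}^{-1}\rest_{X_{-1}}$ maps $X_{-1}$ into $X_0\subseteq X_{-1}$, since $\tT_{-2}$ commutes with $\aA_{-2}^{-1}$ on $\xX_{-2}$, and since $T_{-1}=\tT_{-2}\rest_{X_{-1}}$ while $T=\tT_{-2}\rest_{X_0}$, for $x\in X_{-1}$ one computes
\[
T_{-1}(A_{-1})^{-1}x=\tT_{-2}\aA_{-2}^{-1}x=\aA_{-2}^{-1}\tT_{-2}x=(A_{-1})^{-1}T_{-1}x.
\]
With these three points in place, part (a) applied to $(A_{-1},T_{-1})$ gives that $\Fav_1(A_{-1})=\Fav_0(A)$ is $T_{-1}$-invariant.

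I do not expect any genuine obstacle here: both parts are short consequences of the commutation of $T$ with the resolvent and of the already established facts about the extension operators. The only point that needs some care is the bookkeeping in part (b) --- keeping track of on which space each operator lives (the full extension $\tT_{-n}\in\LLL(\xX_{-n})$ versus its restriction $T_{-n}\in\LLL(X_{-n})$) and confirming that the resolvent estimate and the commutation with the inverse genuinely transfer to the pair $(A_{-1},T_{-1})$ so that part (a) is applicable.
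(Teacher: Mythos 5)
Your proposal is correct and follows essentially the same route as the paper: commutation with $A^{-1}$ gives commutation with the resolvent, the single estimate $\|\lambda^{\alpha}AR(\lambda,A)Tx\|\leq\|T\|\cdot\|\lambda^{\alpha}AR(\lambda,A)x\|$ yields both invariances in (a), and (b) is part (a) applied to $T_{-1}$ on $X_{-1}$. You merely make explicit some details the paper leaves implicit, namely the passage from commuting with $A^{-1}$ to commuting with $R(\lambda,A)$ and the verification that the pair $(A_{-1},T_{-1})$ satisfies the hypotheses of part (a).
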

\begin{proof}
(a)
Suppose that $T\in\LLL(X_0)$ commutes with $R(\cdot, A)$ and let $x\in \XHol_\alpha(A)$. We have to show that $Tx\in\XHol_\alpha(A)$. Since $T$ is assumed to be bounded we make the following observation:
$$\|\lambda^{\alpha}AR(\lambda,A)Tx\|=\|\lambda^{\alpha}ASR(\lambda,A)x\|\leq\|T\|\cdot\|\lambda^{\alpha}AR(\lambda,A)x\|.$$
This implies both assertions.

\medskip\noindent (b) Follows from part (a) applied to $T_{-1}$ on the space $X_{-1}$.
\end{proof}

\begin{definition}
For $\alpha\in \RR$ we write $\alpha=m+\beta$ with $m\in \ZZ$ and $\beta\in (0,1]$, and define
\[
\Fav_\alpha(A):=\Fav_{\beta}(A_m),
\]
with the corresponding norms.
For $\alpha\not\in\ZZ$ we define
\[
\XHol_\alpha(A):=\XHol_{\beta}(A_m),
\]
also with the corresponding norms.
\end{definition}
In particular we have for $\alpha\in (0,1)$ that
\[
\XHol_{-\alpha}(A)=\XHol_{1-\alpha}(A_{-1})\quad \text{and}\quad \Fav_{-\alpha}(A)=\Fav_{1-\alpha}(A_{-1}).
\]
This definition is consistent with Definitions \ref{def:Fav} and \ref{def:XHol}. The following property of these spaces can be directly deduced from the definitions and the previous assertions (by using recursion):
\begin{proposition}
For any $\alpha,\beta\in \RR$ with $\alpha>\beta$ we have
\[
\XHol_\alpha(A)\subseteq \Fav_\alpha(A) \inj \XHol_\beta(A)\subseteq \Fav_\beta(A)
\]
with $\inj$ denoting continuous and dense embeddings of Banach spaces, and $\subseteq$ denoting inclusion of closed subspaces.
\end{proposition}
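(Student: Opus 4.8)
The plan is to reduce the assertion to the case of exponents in $(0,1)$, proved in the preceding Proposition, by transporting it through the canonical isometric isomorphisms of Proposition~\ref{prop:intertwine+1}. First I would note that every $A_k$, $k\in\ZZ$, is isometrically similar to $A=A_0$ (compose the maps $\aA_j\colon\xX_{j+1}\to\xX_j$ and $A_j\colon X_{j+1}\to X_j$ across consecutive levels); hence $(0,\infty)\subseteq\rho(A_k)$ and $\sup_{\lambda>0}\|\lambda R(\lambda,A_k)\|=\sup_{\lambda>0}\|\lambda R(\lambda,A)\|\le M$, so each $A_k$ satisfies \eqref{eq:weakHY} with the same $M$ and all earlier results apply to it. As this similarity is implemented by restrictions of powers of $A$, and the seminorms $x\mapsto\sup_{\lambda>0}\lambda^\gamma\|CR(\lambda,C)x\|$ defining $\Fav_\gamma(C)$ (and their little-$o$ analogues defining $\XHol_\gamma(C)$) are conjugation invariant, it carries $\Fav_\gamma(A_{k+1})$ and $\XHol_\gamma(A_{k+1})$ isometrically onto $\Fav_\gamma(A_k)$ and $\XHol_\gamma(A_k)$, compatibly with all inclusions; together with $(A_k)_j=A_{k+j}$ (Remark~\ref{rem:iter-1}) and the recursive definitions, this yields canonical isometries $\Fav_\alpha(A)\cong\Fav_{\alpha-k}(A_k)$ and $\XHol_\alpha(A)\cong\XHol_{\alpha-k}(A_k)$ for all $k\in\ZZ$ (reading $\XHol_n:=\xX_n(A)$ at integers $n$), intertwining the embeddings in the statement. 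Thus it suffices to assemble the chain inside the towers of the $A_k$.

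Next I would record the two links at integer exponents, for any $B$ among the $A_k$ (write $\underline B$ for the part of $B$ in $\overline{\dom(B)}$). \textbf{(E1)}: $\XHol_1(B)=\dom(\underline B)\subseteq\Fav_1(B)$ as a \emph{closed} subspace, because $\|Bx\|\le\|x\|_{\Fav_1(B)}\le M\|Bx\|$ for $x\in\dom(\underline B)$ — the right estimate from $\lambda BR(\lambda,B)x=\lambda R(\lambda,B)Bx$ and \eqref{eq:weakHY}, the left from $\lambda R(\lambda,B)Bx\to Bx$ (valid since $Bx\in\overline{\dom(B)}$, using \eqref{eq:weakHY}) — so on $\dom(\underline B)$ the $\Fav_1(B)$-norm is equivalent to the graph norm, hence complete. \textbf{(E2)}: $\Fav_1(B)\inj\XHol_\gamma(B)$ (continuous and dense) for $\gamma\in(0,1)$ — membership since $\lambda^\gamma\|BR(\lambda,B)x\|=\lambda^{\gamma-1}\|\lambda BR(\lambda,B)x\|\le\lambda^{\gamma-1}\|x\|_{\Fav_1(B)}\to0$, so in fact $\Fav_1(B)\subseteq\Fav_{\gamma'}(B)\subseteq\XHol_\gamma(B)$ for $\gamma<\gamma'<1$ by the $(0,1)$-case; continuity by the closed graph theorem, since convergence in any of these norms forces, for each fixed $\lambda$, convergence of $BR(\lambda,B)(\cdot)$ in $X_0$, which determines the limit ($BR(\lambda,B)y=0$ for all $\lambda$ implies $y=0$); density since $\dom(\underline B)\subseteq\Fav_1(B)$ is dense in $\XHol_\gamma(B)$ by the $(0,1)$-case. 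I would also use the crossing link $\Fav_1(B)\inj\XHol_0(B)=\overline{\dom(B)}$, dense and continuous, which is the proposition $\Fav_\alpha(B)\subseteq\overline{\dom(B)}$ with $\alpha=1$.

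Now fix $\alpha>\beta$ in $\RR$. For $\XHol_\alpha(A)\subseteq\Fav_\alpha(A)$ and $\XHol_\beta(A)\subseteq\Fav_\beta(A)$ as closed subspaces: after the shift of the first paragraph the exponent lies in $(0,1]$, so this is either the $(0,1)$-case or, at an integer exponent, \textbf{(E1)}. For the continuity of $\Fav_\alpha(A)\inj\XHol_\beta(A)$ I would descend one unit interval at a time, composing one-step continuous embeddings of strictly decreasing exponent: inside a unit interval the $(0,1)$-case (via the shift); from a non-integer exponent in $(k,k+1]$ down to $k$ the crossing link (for a Favard space) or \textbf{(E2)}/the $(0,1)$-case (for a H\"older space), using $\XHol_k(A)\cong\overline{\dom(A_k)}$; and from the integer $k$ (where $\XHol_k(A)\cong\XHol_1(A_{k-1})$) into $(k-1,k)$ the embedding $\XHol_1(A_{k-1})\inj\XHol_\delta(A_{k-1})$ of the $(0,1)$-case; only finitely many steps occur. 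Finally, density: pick $N\in\NN$ with $N\ge\alpha$; then $X_N(A)\subseteq\Fav_N(A)\subseteq\Fav_\alpha(A)$, while $\xX_N(A)\subseteq X_N(A)$ is dense in $\XHol_\beta(A)$ by the $(0,1)$-case together with the standing density of $\xX_m$ in $\xX_n$ for $m\ge n$; hence $\Fav_\alpha(A)$, continuously embedded in and containing a dense subset of $\XHol_\beta(A)$, is dense in it.

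All the analytic substance sits in the earlier propositions (the $(0,1)$-case and $\Fav_\alpha(A)\subseteq\overline{\dom(A)}$); the present argument is bookkeeping, and the point that needs care is the behaviour at integer exponents — that the identifications coming from the shift isometries $A_k$ agree with those built into the recursive definitions, and that, since a closed-subspace inclusion followed by a dense embedding need not be dense, the single embedding $\Fav_\alpha(A)\inj\XHol_\beta(A)$ in the statement is best obtained by establishing continuity (transitive, via the closed graph theorem) and density (via the common subspace $\xX_N(A)$) separately, rather than by composing the alternating $\subseteq$/$\inj$ chain directly.
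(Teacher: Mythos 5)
Your argument is correct and follows exactly the route the paper intends: the paper states this proposition without proof, remarking only that it ``can be directly deduced from the definitions and the previous assertions (by using recursion)'', and your reduction to the $(0,1)$-case via the isometric similarities of the operators $A_k$, with the separate treatment of integer exponents, is precisely that recursion carried out in detail. The one point you flag --- that density of $\Fav_\alpha(A)\inj \XHol_\beta(A)$ must be checked separately rather than read off by composing the alternating $\subseteq$/$\inj$ chain --- is a genuine subtlety the paper glosses over, and your use of the common dense subspace $\xX_N(A)$ handles it correctly.
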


Now we put these spaces in a more general context presented at the end of Section \ref{sec:invert}.

\begin{proposition}\label{cor:ExtFav}
\begin{abc}
\item For $\alpha\in\left(0,1\right]$ we have $A_{-1}F_{\alpha}=F_{\alpha-1}$ and $A_{-1}\xX_{\alpha}=\xX_{\alpha-1}$.
\item
For $\alpha\in\left(0,1\right]$ and $\mathcal{A}$, $\lambda$ and $\D$ as in Theorem \ref{thm:iden} we have
\[
F_{-\alpha}=\Bigl\{(\lambda-\mathcal{A})y\in X_{-1}:\ y\in F_{1-\alpha}\Bigl\}.
\]
If $\alpha\in\left(0,1\right)$, then
\begin{align*}
\xX_{-\alpha}&=\Bigl\{(\lambda-\mathcal{A})y\in X_{-1}:\ y\in \xX_{1-\alpha}\Bigl\}.
\end{align*}
\end{abc}
\end{proposition}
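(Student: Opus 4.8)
The plan is to prove (a) first by a direct computation with the resolvent, and then derive (b) as an application of (a) together with Theorem~\ref{thm:iden}. For part (a), recall that by Proposition~\ref{prop:intertwine+1} the operator $A_{-1}\colon X_0\to X_{-1}$ is an isometric isomorphism that intertwines $A_0=A$ and $A_{-1}$, and likewise $\aA_{-1}\colon\xX_0\to\xX_{-1}$ intertwines $\aA$ and $\aA_{-1}$; in particular $A_{-1}$ commutes with the resolvents $R(\lambda,A)$ and $R(\lambda,A_{-1})$ in the appropriate sense. So for $x\in X_0$ I would write
\[
\lambda^{1-\alpha}A_{-1}R(\lambda,A_{-1})A_{-1}x=\lambda^{1-\alpha}A_{-1}\bigl(AR(\lambda,A)x\bigr),
\]
and, because $A_{-1}$ is an isometry from $X_0$ onto $X_{-1}$, conclude
\[
\|A_{-1}x\|_{F_{1-\alpha}(A_{-1})}=\sup_{\lambda>0}\bigl\|\lambda^{1-\alpha}A_{-1}R(\lambda,A_{-1})A_{-1}x\bigr\|_{X_{-1}(A)}=\sup_{\lambda>0}\bigl\|\lambda^{1-\alpha}AR(\lambda,A)x\bigr\|=\|x\|_{F_\alpha(A)}.
\]
Since $F_{\alpha-1}(A)=F_{1-\alpha}(A_{-1})$ by the Definition preceding Proposition, and similarly $\xX_{\alpha-1}=\xX_{1-\alpha}(A_{-1})$, while $\XHol_{1-\alpha}(A_{-1})$ is characterized by the \emph{vanishing} of $\lambda^{1-\alpha}A_{-1}R(\lambda,A_{-1})(\cdot)$ as $\lambda\to\infty$ — a property that the above identity transports bijectively between $x$ and $A_{-1}x$ — this shows $A_{-1}F_\alpha=F_{\alpha-1}$ and $A_{-1}\xX_\alpha=\xX_{\alpha-1}$, and in fact that $A_{-1}$ restricts to an isometric isomorphism in each case.

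For part (b), the idea is to combine (a) with the concrete description of $X_{-1}$ furnished by Theorem~\ref{thm:iden}: after the identification $i_{-1}$, one has $X_{-1}=\{(\lambda-\A)x:x\in X_0\}$ and $A_{-1}=\A\rest_{X_{-1}}$ (taking $\lambda=0$ costs nothing, but I will keep $\lambda$ to match the statement, writing $A_{-1}=\lambda-\A$ on the relevant domain, up to the harmless shift). By (a), $F_{-\alpha}=F_{(1-\alpha)-1}=A_{-1}F_{1-\alpha}$, and under the embedding $i_{-1}$ the action of $A_{-1}$ on $F_{1-\alpha}\subseteq X_0$ is exactly multiplication by $\lambda-\A$; hence
\[
F_{-\alpha}=\bigl\{(\lambda-\A)y\in X_{-1}:\ y\in F_{1-\alpha}\bigr\}.
\]
The same argument with $\xX$ in place of $F$ and the second identity of Theorem~\ref{thm:iden}, namely $\xX_{-1}=\{(\lambda-\A)x:x\in\xX_0\}$, gives $\xX_{-\alpha}=A_{-1}\xX_{1-\alpha}=\{(\lambda-\A)y\in X_{-1}:y\in\xX_{1-\alpha}\}$ for $\alpha\in(0,1)$; note that here one needs $1-\alpha\in(0,1)$, which is why this part is stated only for $\alpha\in(0,1)$, whereas the $F$-identity also makes sense at $\alpha=1$ since $\xX_0,F_0$ are already defined.

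The one point that requires a little care — and which I expect to be the main (though modest) obstacle — is bookkeeping with the two identifications at once: $F_\alpha(A)$ and $\xX_\alpha$ live inside $X_0$, $F_{-\alpha}$ and $\xX_{-\alpha}$ live inside $X_{-1}$, and Theorem~\ref{thm:iden} identifies $X_{-1}$ with a subspace of $\D$ via $i_{-1}=\A\circ i\circ A_{-1}^{-1}$. One must check that, under this identification, the abstract operator $A_{-1}$ really does act as $x\mapsto(\lambda-\A)x$ on the copy of $X_0$ sitting inside $X_{-1}$ (this is precisely the content of the commuting diagram in the proof of Theorem~\ref{thm:iden}, together with $A_{-1}\rest_{X_0}=A$), so that the set $A_{-1}F_{1-\alpha}$ is literally $\{(\lambda-\A)y:y\in F_{1-\alpha}\}$ as a subset of $\D$. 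Once this identification is written down cleanly, both displayed formulas are immediate, and the norm statements follow from the isometry property in (a).
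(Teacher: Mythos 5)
Your strategy --- transport the Favard and H\"older norms through the isometric isomorphism $A_{-1}\colon X_0\to X_{-1}$ to get (a), then combine with the identification of $X_{-1}$ from Theorem \ref{thm:iden} to get (b) --- is the right one; the paper states this proposition without any proof, and this is clearly the intended argument. But your part (a) contains a concrete indexing error. From the definition $F_\gamma(A):=F_\beta(A_m)$ with $\gamma=m+\beta$, $\beta\in(0,1]$, the index $\gamma=\alpha-1=(-1)+\alpha$ gives $F_{\alpha-1}(A)=F_{\alpha}(A_{-1})$, \emph{not} $F_{1-\alpha}(A_{-1})$; the formula $F_{-\alpha}(A)=F_{1-\alpha}(A_{-1})$ you are pattern-matching on concerns the index $-\alpha$, which is not $\alpha-1$. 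Correspondingly, the last equality in your displayed norm computation is false for $\alpha\neq\tfrac{1}{2}$: the similarity $R(\lambda,A_{-1})=A_{-1}R(\lambda,A)A_{-1}^{-1}$ together with the isometry of $A_{-1}$ gives, for every $\beta\in(0,1]$,
\[
\|A_{-1}x\|_{F_{\beta}(A_{-1})}=\sup_{\lambda>0}\bigl\|\lambda^{\beta}A_{-1}\bigl(AR(\lambda,A)x\bigr)\bigr\|_{X_{-1}}=\sup_{\lambda>0}\bigl\|\lambda^{\beta}AR(\lambda,A)x\bigr\|=\|x\|_{F_{\beta}(A)},
\]
so the quantity you computed equals $\|x\|_{F_{1-\alpha}(A)}$, not $\|x\|_{F_{\alpha}(A)}$. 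The two slips cancel: taking $\beta=\alpha$ above yields $A_{-1}F_\alpha(A)=F_\alpha(A_{-1})=F_{\alpha-1}(A)$, which is the assertion. So the conclusion survives, but two of the identities you assert along the way do not hold as written.

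Two endpoint issues remain. For the $\xX$-part of (a) at $\alpha=1$, the ``vanishing as $\lambda\to\infty$'' characterization is unavailable, since $\xX_1(A)=\dom(\aA)$ by definition; there one argues directly that $A_{-1}\xX_1=\aA(\dom(\aA))=\xX_0$. In (b), your reduction $F_{-\alpha}=A_{-1}F_{1-\alpha}$ applies (a) at the index $1-\alpha$, which for $\alpha=1$ lies outside the range $(0,1]$; moreover $F_0$ is a subspace of $X_{-1}$ rather than of $X_0=\dom(A_{-1})$, so the case $\alpha=1$ of the $F$-identity in (b) requires running the same argument one level down, for $A_{-1}$ on $X_{-1}$ in place of $A$ on $X_0$. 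For $\alpha\in(0,1)$ your treatment of (b), including the bookkeeping with $i_{-1}$ and the commutative diagram of Theorem \ref{thm:iden}, is correct.
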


\section{Semigroup generators}\label{sec:sgrps}
In this section we consider intermediate and extrapolation spaces when the linear operator $A:\dom(A)\to X_0$ is the generator of a semigroup $(T(t))_{t\geq 0}$ (meaning that $T:[0,\infty)\to \LLL(X_0)$ is a monoid homomorphism) in the following sense:
\begin{assumption}\label{asp:sgrps}\begin{num}

\item Let $X_0$ be a Banach space, and let $Y\subseteq X_0'$ be a norming subspace, i.e.,
\[
\|x\|=\sup_{y\in Y, \|y\|\leq1}|\dprod{ x}{y}|\quad\text{for each $x\in X_0$}.
\]
\item Let $T:[0,\infty)\to \LLL(X_0)$ be a semigroup of contractions, which is not necessarily supposed to be strongly continuous, but for which a generator $A:\dom(A)\to X_0$ exists in the sense that
\begin{equation}\label{eq:laplace}
R(\lambda, A)x=\int_0^\infty \ee^{-\lambda s}T(s)x\ \dd s
\end{equation}
exists for each $\lambda\geq 0$ as a weak integral with respect to the dual pair $(X_0,Y)$, i.e., for each $y\in Y$ and $x\in X_0$
\[
\dprod{R(\lambda,A)x}{y}=\int_0^\infty\ee^{-\lambda s}\dprod{T(s)x}{y}\ \dd s,
\]
 and $R(\lambda,A)\in\LLL(X_0)$ is in fact the resolvent of a linear operator $A$. 
\item We also suppose that $T(t)$ commutes with $A^{-1}$ for each $t\geq 0$.
\end{num}
\end{assumption}

If the semigroup $(T(t))_{t>0}$ is only exponentially bounded of type $(M,\omega)$, that is
\[
\|T(t)\|\leq M\ee^{\omega t}\quad\text{for all $t\geq0$,}
\]
then one rescale it (consider$(\ee^{-(\omega+1) t}T(t))_{t\geq 0}$), and renorm the space such that the rescaled semigroup becomes a contraction semigroup. Moreover, the new semigroup has negative growth bound, meaning that $T(t)\to 0$ in norm exponentially fast as $t\to \infty$, and has an invertible generator.

\begin{remark}
\begin{abc}
	\item There are several important classes of semigroups, whose elements satisfy Assumption \ref{asp:sgrps}, hence can be treated in a unified manner: $\pi$-semigroups of Priola \cite{Priola}, weakly continuous semigroups of Cerrai \cite{Cerrai}, bi-continuous semigroups of K\"uhnemund. We will concentrate on this latter class of semigroups in Section \ref{sec:bicont}. In this framework Kunze \cite{Kunze2009} introduced the notion of integrable semigroups, which we briefly describe next.

\item Since we have
\[
\|y\|=\sup_{x\in X_0, \|x\|\leq1}{|\langle x,y\rangle|}
\]
and by the norming assumption
\[
\|x\|=\sup_{y\in Y, \|y\|\leq1}{|\langle x,y\rangle|},
\]
the pair $(X_0,Y)$ is called a norming dual pair. Kunze has worked out the theory of semigroups on such norming dual pairs in \cite{Kunze2009}, we recall at least the basic definitions here: We assume without loss of generality that $Y$ is a Banach space and consider the weak topology $\sigma(X_0,Y)$ on $X_0$.
 An \emph{integrable semigroup} on the pair $(X_0,Y)$ is a semigroup $(T(t))_{t\geq0}$ of $\sigma$-continuous linear operators of type $(M,\omega)$ such that:
\begin{num}
\item $(T(t))_{t\geq0}$ is a semigroup, i.e. $T(t+s)=T(t)T(s)$ and $T(0)=\Id$ for all $t,s\geq0$.
\item $(T(t))_{t\geq0}$ is exponentially bounded, i.e. there exists $M\geq1$ and $\omega\in\mathbb{R}$ such that $\|T(t)\|\leq M\ee^{\omega t}$ for all $t\geq0$. We then say that $(T(t))_{t\geq0}$ is of type $(M,\omega)$.
\item For all $\lambda$ with $\text{Re}(\lambda)>\omega$, there exists an $\sigma$-continuous linear operator $R(\lambda)$ such that for all $x\in X_0$ and all $y\in Y$
\[ 
\dprod {R(\lambda)x}{y}=\int_0^{\infty}\ee^{-\lambda t}\dprod{T(t)x}{y} \ \dd t.
\]
\end{num}
Kunze defines the generator $A$ of the semigroup as the (unique) operator $A:\dom(A)\to X_0$ (if it exists at all) with $R(\lambda)=(\lambda-A)^{-1}$, precisely as we did in Assumption \ref{asp:sgrps}. Note that $\sigma$-continuity of $T(t)$ can be used to assure that $Y$ is invariant under $T'(t)$, cf.{} the next remark.
\end{abc}
\end{remark}

Some further consequences of the previous assumptions follow:

\begin{remark} The commutation property can be verified easily if $Y$ can be chosen such that it is invariant under $T'(t)$ for each $t\geq 0$:
\begin{align*}
\dprod{A^{-1}T(t)x}{y}&=\int_0^{\infty}\dprod{T(s)T(t)x}{y}\ \dd s=\int_0^{\infty}\dprod{T(s+t)x}{y}\ \dd s\\
&=\Dprod{\int_0^{\infty}T(s)x\ \dd s}{T'(t)y}=\dprod{T(t)A^{-1}x}{y}.\end{align*}
\end{remark}
\begin{remark}
\begin{num}
\item From \eqref{eq:laplace} it follows that for each $x\in X_0$
\begin{equation}\label{eq:midnight}
T(t)x-x=A\int_0^t T(s)x\ \dd s.
\end{equation}
Indeed, we have by \eqref{eq:laplace} that
\begin{align*}
x&=A\int_0^\infty T(s)x\ \dd s\\
T(t)x&=A\int_0^\infty T(s)T(t)x\ \dd s=\int_t^\infty T(s)x\ \dd s.
\end{align*}
Subtracting the first of these equation from the second one, we obtain the statement.
\item If moreover $A$ commutes with $T(t)$ for each $t\geq0$, then for each $x\in\dom(A)$ we have
\begin{equation}\label{eq:midnightDA}
T(t)x-x=\int_0^t T(s)Ax\ \dd s.
\end{equation}
Indeed, as in (1) we have by \eqref{eq:laplace}
\begin{align*}
-x&=-A^{-1}Ax=\int_0^\infty T(s)Ax\ \dd s\\
-T(t)x&=-A^{-1}T(t)Ax=\int_0^\infty T(s)T(t)Ax\ \dd s=\int_t^\infty T(s)Ax\ \dd s.
\end{align*}
By a simple subtraction we obtain the statement.
\end{num}
\end{remark}

The next lemma and its proof is standard for various classes of semigroups.
\begin{lemma}\label{lem:spacecont}
Let $(T(t))_{t\geq 0}$ is (locally) norm bounded, then 
\[
X_{\conti}:=\{x\in X_0: t\mapsto T(t)x\text{ is}\ \|\cdot\|\text{-continuous}\bigr\}
\]
is a closed a subspace of $X_0$ invariant under the semigroup.
Under Assumption \ref{asp:sgrps} we have
\[
\xX_0=\overline{\dom(A)}=X_{\conti}.
\]
\end{lemma}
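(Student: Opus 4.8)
The plan is to establish the three assertions in the following order: first that $X_{\conti}$ is a closed subspace of $X_0$, then that it is invariant under the semigroup, and finally the identity $\xX_0 = \overline{\dom(A)} = X_{\conti}$ under Assumption~\ref{asp:sgrps}. That $X_{\conti}$ is a linear subspace is immediate from the linearity of each $T(t)$ and the fact that uniform limits of continuous functions are continuous is not even needed here---linearity of the map $x\mapsto T(\cdot)x$ suffices for the subspace property. For closedness I would use local norm boundedness: if $x_n\to x$ in $X_0$ with each orbit $t\mapsto T(t)x_n$ norm-continuous, then on any compact interval $[0,t_0]$ we have $\sup_{s\in[0,t_0]}\|T(s)x_n - T(s)x\| \le \bigl(\sup_{s\le t_0}\|T(s)\|\bigr)\|x_n-x\|\to 0$, so $t\mapsto T(t)x$ is a locally uniform limit of continuous functions, hence continuous. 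Invariance under $T(r)$ is the standard trick: if $x\in X_{\conti}$ then $T(t)\bigl(T(r)x\bigr) = T(r)\bigl(T(t)x\bigr)$, and since $t\mapsto T(t)x$ is norm-continuous and $T(r)$ is a bounded operator, the composition $t\mapsto T(r)T(t)x$ is norm-continuous, so $T(r)x\in X_{\conti}$.

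For the identity $\xX_0 = X_{\conti}$, the inclusion $\xX_0 \subseteq X_{\conti}$ is the one to do first. Take $x\in\dom(A)$. Using \eqref{eq:midnightDA} (which applies since Assumption~\ref{asp:sgrps}(3) gives commutation of $T(t)$ with $A^{-1}$, hence with $A$ on $\dom(A)$), we have $T(t)x - T(s)x = \int_s^t T(r)Ax\,\dd r$ for $s\le t$, whose norm is bounded by $|t-s|\cdot\sup_{r\le t_0}\|T(r)\|\cdot\|Ax\|$; so the orbit of $x$ is even locally Lipschitz, in particular norm-continuous, giving $\dom(A)\subseteq X_{\conti}$. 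Since $X_{\conti}$ is closed, $\xX_0 = \overline{\dom(A)}\subseteq X_{\conti}$. For the reverse inclusion $X_{\conti}\subseteq\xX_0$, take $x\in X_{\conti}$; then $s\mapsto T(s)x$ is norm-continuous, so the weak integrals $\lambda\int_0^\infty \ee^{-\lambda s}T(s)x\,\dd s = \lambda R(\lambda,A)x$ are in fact norm-convergent Riemann-type integrals of a continuous integrand, and the standard estimate $\|\lambda R(\lambda,A)x - x\| = \bigl\|\lambda\int_0^\infty \ee^{-\lambda s}\bigl(T(s)x - x\bigr)\dd s\bigr\| \le \sup_{s\le\delta}\|T(s)x-x\| + (\text{tail term that}\to 0)$ shows $\lambda R(\lambda,A)x\to x$ in norm as $\lambda\to\infty$; since $\lambda R(\lambda,A)x\in\dom(A)$, we get $x\in\overline{\dom(A)} = \xX_0$.

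The main obstacle---really the only delicate point---is making rigorous the step that for $x\in X_{\conti}$ the weak integral defining $R(\lambda,A)x$ coincides with (and can be manipulated as) an honest norm-valued integral of the continuous function $s\mapsto \ee^{-\lambda s}T(s)x$. Here one needs that a norm-continuous, exponentially bounded $X_0$-valued function has a Bochner/Riemann integral which, when paired with any $y\in Y$, reproduces the scalar integral appearing in \eqref{eq:laplace}; this identifies the weak integral with the strong one, after which the convergence $\lambda R(\lambda,A)x\to x$ is the classical $C_0$-semigroup computation. I would invoke norm-continuity together with the norming property of $Y$ (Assumption~\ref{asp:sgrps}(1)) to pin down this identification, and then the rest is routine. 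Everything here is valid verbatim for Fréchet spaces as well, but we do not need that generality.
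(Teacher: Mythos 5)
Your proof is correct, and for the first three assertions (closedness via local uniform convergence, invariance via commutation of the semigroup operators, and $\dom(A)\subseteq X_{\conti}$ via \eqref{eq:midnightDA}) it matches the paper's argument, which simply declares the first two evident. The only genuine divergence is in the converse inclusion $X_{\conti}\subseteq\overline{\dom(A)}$: you use the Abel means $\lambda R(\lambda,A)x\in\dom(A)$ and show $\lambda R(\lambda,A)x\to x$ in norm as $\lambda\to\infty$, whereas the paper uses the Ces\`aro means $x_n:=n\int_0^{1/n}T(s)x\ \dd s$, which lie in $\dom(A)$ by \eqref{eq:midnight} and converge to $x$ in norm by continuity of the orbit at $0$. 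Both are approximate-identity arguments and both are valid. One remark: the identification of the weak integral with a Bochner/Riemann integral, which you single out as the one delicate point, is not actually needed. The norming property of $Y$ gives directly
$\|\lambda R(\lambda,A)x-x\|=\sup_{\|y\|\leq1}|\langle \lambda R(\lambda,A)x-x,y\rangle|\leq\lambda\int_0^\infty\ee^{-\lambda s}\|T(s)x-x\|\ \dd s$,
and splitting this \emph{scalar} integral at some $\delta>0$ (using contractivity for the tail) already yields the convergence; this is exactly how the paper estimates its Ces\`aro means, and it keeps everything at the level of scalar integrals. The paper's choice of means is marginally more economical, since the integral over $[0,\frac1n]$ needs no tail estimate, while your route has the small advantage of working directly with the resolvent, which is the primitive object in Assumption \ref{asp:sgrps}.
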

\begin{proof}
The closedness and invariance of $X_{\conti}$ are evident. We first show $\dom(A)\subseteq X_{\conti}$, which implies $\overline{\dom(A)}\subseteq X_{\conti}$ by closedness of $X_{\conti}$. Let $x\in\dom(A)$. By \eqref{eq:midnightDA} we conclude that $T(t)x-x=\int_0^t{T(s)Ax\ \dd s}$. As $t\to0$ the integral here tends to zero in $\|\cdot\|$: 
\begin{align*}
\|T(t)x-x\|=\sup_{\|y\|\leq1}|\dprod{T(t)x-x}{y}|\leq \sup_{\|y\|\leq1}\int_0^t|\dprod{T(s)Ax}{y}|\ \dd s\leq t\|Ax\|.
\end{align*} 
Whence we conclude that $\dom(A)\subseteq X_{\conti}$. For the converse inclusion suppose that $x\in X_{\conti}$. Again by \eqref{eq:midnight} we obtain that the sequence of vectors $x_n:=n\int_0^{\frac{1}{n}}{T(s)x\ \dd s}\in \dom(A)$ ($n\in\NN$) converges to $x$. Indeed:
\begin{align*}
\|x_n-x\|=\sup_{\|y\|\leq1}|\dprod{ x_n-x}{y}|\leq\sup_{\|y\|\leq1}n\int_0^{\frac{1}{n}}|\dprod {T(s)x-x}{y}|\ \dd s\leq n\int_0^{\frac{1}{n}}{\|T(s)x-x\|\ \dd s}.
\end{align*}
 By the continuity of $s\mapsto T(s)x$ we obtain the inclusion $X_{\conti}\subseteq\overline{\dom(A)}$.

\end{proof}

Based on this lemma one can prove the following characterization of the Favard and H\"older spaces:
 \begin{proposition}\label{prop:FavCont}
 Let $(T(t))_{t\geq0}$ be a semigroup satisfying Assumption \ref{asp:sgrps} with negative growth bound and generator $A$. For
 $\alpha\in(0,1]$ define
\begin{equation}\label{equ:Fav}
\TFav_{\alpha}(T):=\Bigl\{x\in X_0:\sup_{s>0}\frac{\|T(s)x-x\|}{s^{\alpha}}<\infty\Bigr\}=\Bigl\{x\in X_0:\sup_{s\in(0,1)}\frac{\|T(s)x-x\|}{s^{\alpha}}<\infty\Bigr\},
\end{equation}
and for $\alpha\in (0,1)$ define
\begin{align}\label{equ:Hol}
 \THol_{\alpha}(T)&:=\Bigl\{x\in X_0:\sup_{s>0}\frac{\|T(s)x-x\|}{s^{\alpha}}<\infty\text{ and }\lim_{s\downto 0}\frac{\|T(s)x-x\|}{s^{\alpha}}=0\Bigr\}\\
\notag &=\Bigl\{x\in X_0:\lim_{s\downto 0}\frac{\|T(s)x-x\|}{s^{\alpha}}=0\Bigr\},
\end{align}
which become Banach spaces if endowed with the norm
\[
\|x\|_{\alpha}:=\sup_{s>0}\frac{\|T(s)x-x\|}{s^{\alpha}}.
\]
The space $ \THol_{\alpha}(T)$ is a closed subspace of $\TFav_\alpha(T)$. These spaces are  invariant under the semigroup $(T(t))_{t\geq0}$, and $\THol_\alpha(T)$ is the space of $\|\cdot\|_\alpha$-strong continuity in $\TFav_\alpha(T)$.
For $\alpha\in (0,1]$ we have $\Fav_\alpha(A)=\TFav_\alpha(T)$ and for $\alpha\in (0,1)$ we have $\XHol_\alpha(A)=\THol_\alpha(T)$ with equivalent norms.
\end{proposition}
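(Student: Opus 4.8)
The plan is to establish the claimed identities $\Fav_\alpha(A)=\TFav_\alpha(T)$ and $\XHol_\alpha(A)=\THol_\alpha(T)$ by relating the two scales via the resolvent, using the Laplace transform representation \eqref{eq:laplace} and the identity \eqref{eq:midnight}. The standard-topology analogues of all assertions appear in \cite[Section II.5]{EN}, so the real task is to check that the weak-integral framework of Assumption \ref{asp:sgrps} suffices. All norm estimates below will be carried out by testing against $y\in Y$ with $\|y\|\le 1$ and invoking the norming property, exactly as in the proof of Lemma \ref{lem:spacecont}.

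First I would dispose of the elementary structural claims. That $\THol_\alpha(T)$ is a closed subspace of $\TFav_\alpha(T)$ and that both are semigroup-invariant follows directly from the contractivity of $T(t)$ and the algebraic identity $T(s)T(t)x-T(t)x=T(t)(T(s)x-x)$, together with the triangle inequality $\|T(s)(x+y)-(x+y)\|\le \|T(s)x-x\|+\|T(s)y-y\|$; completeness of $\TFav_\alpha(T)$ under $\|\cdot\|_\alpha$ is a routine Cauchy-sequence argument (a $\|\cdot\|_\alpha$-Cauchy sequence is $\|\cdot\|$-Cauchy since $\|x\|\le \|T(1)x-x\|+\|T(1)x\|$ and the growth bound is negative, hence $\|T(1)\|<1$, so $\|x\|\le(1-\|T(1)\|)^{-1}\|T(1)x-x\|\le (1-\|T(1)\|)^{-1}\|x\|_\alpha$; then pass to the limit in the sup). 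The second equality in \eqref{equ:Fav} holds because for $s\ge 1$ one has $\|T(s)x-x\|/s^\alpha\le \|T(s)x\|+\|x\|\le 2\|x\|$, bounded independently of $s$; the second equality in \eqref{equ:Hol} is immediate since the local-vanishing condition forces boundedness near $0$ and boundedness for large $s$ is automatic. That $\THol_\alpha(T)$ is precisely the space of $\|\cdot\|_\alpha$-strong continuity in $\TFav_\alpha(T)$ is the one point where I expect to spend a little care: the ``$\subseteq$'' direction (local vanishing at $0$ implies $\|\cdot\|_\alpha$-continuity of $t\mapsto T(t)x$) is the substantive half and follows by splitting $T(t+s)x-T(t)x$ and estimating, while ``$\supseteq$'' is trivial since $\|T(s)x-x\|_\alpha\to 0$ is weaker than $\|T(s)x-x\|/s^\alpha\to 0$ only after dividing — here one uses that $\|\cdot\|_\alpha$-continuity at $t=0$ gives $\|T(s)x-x\|_\alpha\to 0$, hence in particular $\sup_{r>0}\|T(r)(T(s)x-x)-(T(s)x-x)\|/r^\alpha\to 0$, and specializing the structure yields the claim.

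The core of the proof is the identification $\Fav_\alpha(A)=\TFav_\alpha(T)$. For the inclusion $\TFav_\alpha(T)\subseteq\Fav_\alpha(A)$: for $x\in\TFav_\alpha(T)$ write, using \eqref{eq:laplace}, $AR(\lambda,A)x=\lambda R(\lambda,A)x-x=\int_0^\infty \lambda\ee^{-\lambda s}(T(s)x-x)\,\dd s$ as a weak integral, so $\|\lambda^\alpha AR(\lambda,A)x\|\le \int_0^\infty \lambda^{\alpha+1}\ee^{-\lambda s}\|T(s)x-x\|\,\dd s\le \|x\|_{\TFav_\alpha(T)}\int_0^\infty \lambda^{\alpha+1}\ee^{-\lambda s}s^\alpha\,\dd s=\|x\|_{\TFav_\alpha(T)}\,\Gamma(\alpha+1)$, uniformly in $\lambda>0$. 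For the converse $\Fav_\alpha(A)\subseteq\TFav_\alpha(T)$: from \eqref{eq:midnight} we have $T(s)x-x=A\int_0^s T(r)x\,\dd r$; the plan is to write $A\int_0^s T(r)x\,\dd r$ in terms of $AR(\lambda,A)x$ by the standard computation $A\int_0^s T(r)x\,\dd r = \int_0^s AT(r)x\,\dd r$ is not directly available (no smoothness), so instead I would use the resolvent identity route: test against $y\in Y$ and use $\int_0^s\langle T(r)x,y\rangle\,\dd r$, apply $\langle A\cdot,y\rangle$-estimates via the known bound $\sup_{\lambda>0}\|\lambda^\alpha AR(\lambda,A)x\|=\|x\|_{\Fav_\alpha(A)}$ combined with the asymptotic $\|\lambda R(\lambda,A)-\Id\|$-behaviour on $\xX_0$ and the representation $T(s)x-x$ through an inverse-Laplace/approximation argument. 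The cleanest path, and the one I expect to use, is to invoke that $\Fav_\alpha(A)=\Fav_1(A_{(\alpha)})$-type reductions together with the semigroup version already recorded in \cite[Cor. II.5.21 ff.]{EN} applied to the $C_0$-semigroup obtained by restricting to $\xX_0=X_{\conti}$ (Lemma \ref{lem:spacecont}), since $\Fav_\alpha(A)\subseteq\xX_0$ by the Proposition above and on $\xX_0$ the semigroup is strongly continuous, so the classical equivalence applies verbatim. The main obstacle is precisely this reduction to the $C_0$-case: one must verify that $\Fav_\alpha(A)$ computed with the resolvent of $A$ on $X_0$ agrees with $\Fav_\alpha(\aA)$ computed on $\xX_0$ — but since $\Fav_\alpha(A)\subseteq\xX_0$ and $R(\lambda,\aA)=R(\lambda,A)|_{\xX_0}$, this is immediate, and then $\THol_\alpha$ versus $\XHol_\alpha$ follows by taking the subspace of vanishing in the same equivalence, completing the proof.
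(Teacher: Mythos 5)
Your proposal is correct and, at its core, uses the same mechanism as the paper: after the direct invariance argument, everything is reduced to the classical $C_0$-theory on $\xX_0=X_{\conti}$ (via Lemma \ref{lem:spacecont}, the inclusion $\Fav_\alpha(A)\subseteq\xX_0$, and $R(\lambda,\aA)=R(\lambda,A)\rest_{\xX_0}$), which is exactly the paper's one-line reduction to \cite[Sec.~II.5]{EN}. The only difference is cosmetic: you prove the inclusion $\TFav_\alpha(T)\subseteq\Fav_\alpha(A)$ directly by the weak Laplace-transform estimate, whereas the paper instead observes $\TFav_\alpha(T)\subseteq X_{\conti}=\xX_0$ and sends both inclusions through the $C_0$-case at once.
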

\begin{proof}
The invariance of the spaces $\TFav_\alpha$ can be proved as follows: For $x\in\TFav_{\alpha}$ we have:
\begin{align*}
\|T(t) x\|_{{\alpha}}&=\sup_{s>0}{\frac{\|T(s)T(t)x-T(t)x\|}{s^{\alpha}}}\leq\|T(t)\|\cdot\sup_{s>0}{\frac{\|T(s)x-x\|}{s^{\alpha}}}\leq M\|x\|_{{\alpha}}.
\end{align*}
Similar reasoning proves the invariance of $\THol_\alpha$. Since $\TFav_\alpha\subseteq X_{\conti}=\xX_0=\overline{\dom(A)}$ and $\Fav_\alpha(A)\subseteq \xX_0=\overline{\dom(A)}$, the rest of the assertions follow from the corresponding results concerning $C_0$-semigroups, see, e.g., \cite[Sec. II.5]{EN}. 
\end{proof}

To complete the picture we recall the next result from \cite[Chapter 5]{Lunardi}, which is formulated there only for $C_0$-semigroups as a theorem, but Lunardi also remarks, without stating the precise assumptions, that this result still holds if one omits the strong continuity assumption. We require here the conditions from Assumption \ref{asp:sgrps}, under which the proof is verbatim the same as for the $C_0$-case, and is based on formulas \eqref{eq:midnight} and \eqref{eq:midnightDA}.

\begin{proposition}Let $A$ generate the semigroup $(T(t))_{t>0}$ of negative growth bound in the sense describe in Assumption \ref{asp:sgrps}. Then for $p\in [1,\infty]$ and $\alpha\in (0,1)$ we have for the interpolation space:
\[
(X,\dom(A))_{\alpha,p}=\{x\in X:\ t\mapsto\psi(t):=t^{-\alpha}\|T(t)x-x\|\in \Ell^p_*(0,\infty)\},
\]
where $\Ell^p_*(0,\infty)$ denotes the $\Ell^p$-space with respect to the Haar measure $\frac {\dd t}{t}$ on the multiplicative group $(0,\infty)$. Moreover,
   the norms $\|x\|_{\alpha,p}$ and 
\[
\|x\|^{**}_{\alpha,p}=\|x\|+\|\psi\|_{\Ell^p_*(0,\infty)}
\]
are equivalent.
\end{proposition}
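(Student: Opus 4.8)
The plan is to reduce everything to the classical $C_0$-semigroup case, exactly as the paragraph preceding the statement anticipates, by working on the subspace $\xX_0 = \overline{\dom(A)}$. First I would record that since $(T(t))_{t\ge 0}$ has negative growth bound and satisfies Assumption \ref{asp:sgrps}, the operator $\aA = A|_{\xX_0}$ is the generator of a genuine $C_0$-semigroup $(\underline T(t))_{t\ge 0}$ on $\xX_0$ (this is Lemma \ref{lem:spacecont}: $\xX_0 = X_{\conti}$, so $T(t)$ restricts to a strongly continuous semigroup on $\xX_0$, and the resolvent formula \eqref{eq:laplace} identifies its generator with $\aA$). Moreover the real interpolation space $(X_0,\dom(A))_{\alpha,p}$ coincides with $(\xX_0,\dom(\aA))_{\alpha,p}$: indeed $\dom(A)$ and $\dom(\aA)=\{x\in\dom(A):Ax\in\xX_0\}$ have the same closure $\xX_0$, and for the $K$-functional only the pair of Banach spaces $\xX_0\hookrightarrow\dom(A)$ matters up to equivalence of norms because $\dom(A^2)\subseteq\dom(\aA)$ is dense in $\dom(A)$ (Remark \ref{rem:2}), so the two graph-norm completions yield the same interpolation functor values with equivalent norms.

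Next I would invoke the classical theorem for $C_0$-semigroups from \cite[Chapter 5]{Lunardi}: for the generator $\aA$ of the $C_0$-semigroup $(\underline T(t))_{t\ge 0}$ (of negative growth bound) and for $p\in[1,\infty]$, $\alpha\in(0,1)$,
\[
(\xX_0,\dom(\aA))_{\alpha,p}=\Bigl\{x\in\xX_0:\ t\mapsto t^{-\alpha}\|\underline T(t)x-x\|\in \Ell^p_*(0,\infty)\Bigr\},
\]
with equivalent norms $\|\cdot\|_{\alpha,p}$ and $\|\cdot\|+\|\psi\|_{\Ell^p_*(0,\infty)}$. It then only remains to match the right-hand sides. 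Since $\underline T(t)x = T(t)x$ for $x\in\xX_0$, the condition on $\psi$ is literally the same; and the condition $\psi\in\Ell^p_*(0,\infty)$ already forces $x\in\xX_0$, because finiteness of $\|\psi\|_{\Ell^p_*}$ near $t=0$ implies $\|T(t)x-x\|\to 0$ along a sequence $t\downto 0$ (for $p=\infty$ one argues instead that $\sup_{t}t^{-\alpha}\|T(t)x-x\|<\infty$ gives $\|T(t)x-x\|\to 0$ as $t\downto 0$), hence $x\in X_{\conti}=\xX_0$ by Lemma \ref{lem:spacecont}. Thus the set $\{x\in X_0:\psi\in\Ell^p_*(0,\infty)\}$ equals $\{x\in\xX_0:\psi\in\Ell^p_*(0,\infty)\}$, which by the displayed $C_0$-identity equals $(\xX_0,\dom(\aA))_{\alpha,p}=(X_0,\dom(A))_{\alpha,p}$, with the stated equivalence of norms carrying over verbatim.

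The only genuinely delicate point, which I would treat carefully rather than by hand-waving, is the claim that $(X_0,\dom(A))_{\alpha,p}=(\xX_0,\dom(\aA))_{\alpha,p}$ when $A$ is not densely defined: one must check that replacing the ``outer'' space $X_0$ by its closed subspace $\xX_0$ does not change the interpolation space. This follows because the $K$-functional $K(t,x;X_0,\dom(A))$ for $x\in\dom(A)\subseteq\xX_0$ only decomposes $x$ as a sum of an element of $\dom(A)$ and an element of $X_0$, but one may always replace the $X_0$-component by its value minus the $\dom(A)$-component — and density of $\dom(A^2)$ in $\dom(A)$ together with $\dom(A)\subseteq\xX_0$ ensures $K(t,x;X_0,\dom(A))\simeq K(t,x;\xX_0,\dom(\aA))$ uniformly in $t$; alternatively one invokes the reiteration/retract properties of the real method. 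Everything else is bookkeeping: rescaling to negative growth bound was already arranged in the hypotheses, and the passage from norm-strong to $\tau$-continuous semigroups plays no role here since within $\xX_0$ strong continuity holds in the norm.
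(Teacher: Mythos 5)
Your argument is correct in substance, but it takes a genuinely different route from the paper. The paper gives no detailed proof at all: it asserts that Lunardi's proof for $C_0$-semigroups goes through \emph{verbatim} in the present setting, because the only semigroup-theoretic inputs are the identities \eqref{eq:midnight} and \eqref{eq:midnightDA} (which hold under Assumption \ref{asp:sgrps}) together with norm boundedness; the interpolation couple is kept as $(X_0,\dom(A))$ throughout and one simply re-runs the classical estimates. You instead reduce to the honest $C_0$-semigroup $(\tT(t))_{t\geq 0}$ on $\xX_0$, quote the classical theorem there, and then splice the two sides back together via two auxiliary facts: (i) $(X_0,\dom(A))_{\alpha,p}=(\xX_0,\dom(\aA))_{\alpha,p}$, and (ii) the function-space condition already forces $x\in\xX_0$. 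Point (i) is correct and your identification of the key ingredients is right: for $x\in\xX_0$ any decomposition $x=a+b$ with $b\in\dom(A)$ automatically has $a=x-b\in\xX_0$, and the density of $\dom(A^2)\subseteq\dom(\aA)$ in $\dom(A)$ for the graph norm (Remark \ref{rem:2}) lets one perturb $b$ into $\dom(\aA)$ at arbitrarily small cost, so the two $K$-functionals actually coincide on $\xX_0$. What your approach buys is that nothing from Lunardi's proof needs to be re-examined; what the paper's approach buys is that no couple-identification lemma is needed and the statement is proved directly for the possibly non-densely-defined $A$.

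One step you should tighten is (ii) for $p<\infty$: finiteness of $\|\psi\|_{\Ell^p_*}$ near $0$ gives $\|T(t_n)x-x\|\to 0$ only along \emph{some} sequence $t_n\downto 0$, and that alone does not place $x$ in $X_{\conti}$ (which requires norm continuity of the whole orbit), so you cannot invoke Lemma \ref{lem:spacecont} quite as stated. The fix is to use the averaged bound instead: with $x_n:=n\int_0^{1/n}T(s)x\,\dd s\in\dom(A)$ one has $\|x_n-x\|\leq n\int_0^{1/n}\|T(s)x-x\|\,\dd s$, and H\"older's inequality against the weight $s^{-\alpha-1/p}$ shows this tends to $0$ whenever $\psi\in\Ell^p_*(0,1)$; hence $x\in\overline{\dom(A)}=\xX_0$ directly. (For $p=\infty$ your argument via continuity at $t=0$ is fine.) With that repair the proof is complete.
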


We conclude this section with the construction of the extrapolated semigroup as a direct consequence of the results in Section \ref{sec:invert}, particularly of Proposition \ref{prop:extT}.
\begin{proposition}
Let $A$ generate the semigroup $(T(t))_{t\geq 0}$ of negative growth bound in the sense of Assumption \ref{asp:sgrps}. Then there is an extension $(T_{-1}(t))_{t\geq 0}$ of the semigroup $(T(t))_{t\geq 0}$ on the extrapolated space $X_{-1}$, whose generator is $A_{-1}$.
\end{proposition}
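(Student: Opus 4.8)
The plan is to produce the extrapolated semigroup by applying Proposition \ref{prop:extT}(a) to each operator $T(t)$, $t\geq 0$, and then checking that the family of extensions is again a semigroup whose generator is $A_{-1}$. First I would recall that under Assumption \ref{asp:sgrps} each $T(t)\in\LLL(X_0)$ is a contraction commuting with $A^{-1}$, so Proposition \ref{prop:extT}(a) (applied with $n=1$) gives for each $t\geq 0$ a unique continuous extension $\tT_{-1}(t)\in\LLL(\xX_{-1})$ which leaves $X_{-1}$ invariant and whose restriction $T_{-1}(t):=(\tT_{-1}(t))|_{X_{-1}}$ lies in $\LLL(X_{-1})$; moreover $T_{-1}(t)$ commutes with $A_{-1}^{-1}$ since $\tT_{-1}(t)$ commutes with $\aA_{-1}^{-1}$. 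I would then verify the semigroup law: both $T_{-1}(t)T_{-1}(s)$ and $T_{-1}(t+s)$ are bounded operators on $X_{-1}$ whose restrictions to the dense subspace $X_0$ agree (they both equal $T(t)T(s)=T(t+s)$ there, by the uniqueness of continuous extensions), hence they coincide on $X_{-1}$; similarly $T_{-1}(0)=\Id$. Since $X_0$ is dense in $\xX_{-1}$ hence in $X_{-1}$, this uniqueness-of-extension argument is clean.

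Next I would identify the generator. By Assumption \ref{asp:sgrps} the resolvent of $A$ is the weak Laplace transform of $T(\cdot)$ with respect to the dual pair $(X_0,Y)$. The natural approach is to show that the resolvent of $A_{-1}$, namely $R(\lambda,A_{-1})=A_{-1}^{-1}$-type operators (more precisely $R(\lambda,A_{-1})$, which exists since $A_{-1}$ is similar to $A$ by Proposition \ref{prop:intertwine+1}), is the Laplace transform of $(T_{-1}(t))_{t\geq 0}$ in an appropriate weak sense on $X_{-1}$, and that this characterizes $A_{-1}$ as the generator in the sense of Assumption \ref{asp:sgrps}. Concretely, using the similarity $A_{-1}=A_{-2}^{-1}\,\text{(version of $A$)}\,A_{-2}$ and the fact that $R(\lambda,A_{-1})$ is the continuous extension of $R(\lambda,A)$ (because $R(\lambda,A)$ commutes with $A^{-1}$, so Proposition \ref{prop:extT}(a) applies to it as well, and its extension must be $R(\lambda,A_{-1})$ by uniqueness), one transports the integral identity $\langle R(\lambda,A)x,y\rangle=\int_0^\infty e^{-\lambda s}\langle T(s)x,y\rangle\,\dd s$ from $X_0$ to $X_{-1}$; the point is that on $X_0$ (dense in $X_{-1}$) the extrapolated objects restrict to the original ones, and then one extends by continuity and density, choosing a norming subspace $Y_{-1}\subseteq X_{-1}'$ compatible with the construction (for instance the image of a suitable predual under the isometry $A_{-1}$). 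Alternatively, and perhaps more simply, one checks that $R(\lambda,A_{-1})=\aA_{-1}^{-1}$-style operator satisfies $R(\lambda,A_{-1})z=\int_0^\infty e^{-\lambda s}T_{-1}(s)z\,\dd s$ as a weak integral on $X_{-1}$ directly, by writing $z=A_{-1}x$ with $x\in X_0$, using that $T_{-1}(s)A_{-1}=A_{-1}T(s)$ (the intertwining from Proposition \ref{prop:extT}(a), which gives similarity of $T_{-1}$ and $T$), and pulling $A_{-1}$ out of the integral against the weak topology, so the identity reduces to the one for $T(\cdot)$ on $X_0$.

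The main obstacle I anticipate is the bookkeeping around the topology/norming pair on $X_{-1}$: Assumption \ref{asp:sgrps} defines ``generator'' via a weak integral with respect to a norming dual pair $(X_0,Y)$, so to say that $A_{-1}$ generates $(T_{-1}(t))_{t\geq 0}$ in that same sense I must produce an appropriate norming subspace $Y_{-1}\subseteq X_{-1}'$ and verify the weak-integral identity, rather than merely asserting $R(\lambda,A_{-1})=(\lambda-A_{-1})^{-1}$ together with the Laplace formula on the norm-dense subspace $X_0$. In practice the cleanest route is: since $A_{-1}$ is similar to $A$ via the isometric isomorphism $A_{-1}:X_0\to X_{-1}$, transport the whole structure $(X_0,Y,T,A)$ through this isomorphism to $(X_{-1},Y_{-1},T_{-1},A_{-1})$, where $Y_{-1}:=\{y\circ A_{-1}^{-1}:y\in Y\}$ (reinterpreting $Y$ as functionals on $X_0$); then all of Assumption \ref{asp:sgrps} for $(T_{-1}(t))_{t\geq 0}$ follows mechanically from the corresponding statements for $(T(t))_{t\geq 0}$, and the identification of the generator as $A_{-1}$ is immediate. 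I would present the proof in this transport-of-structure form, noting explicitly that $T_{-1}(t)$ commutes with $A_{-1}^{-1}$ so that Assumption \ref{asp:sgrps}(3) holds on the extrapolated level, and remark that the uniqueness of all the extensions (of $T(t)$ and of $R(\lambda,A)$) is what makes the diagram commute and the semigroup/resolvent identities transfer.
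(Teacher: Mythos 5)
Your overall strategy---extend each $T(t)$ via Proposition \ref{prop:extT}, obtain the semigroup law and the generator identification from the similarity $T_{-1}(t)=A_{-1}T(t)A_{-1}^{-1}$, and transport the norming pair $(X_0,Y)$ to $(X_{-1},Y_{-1})$ with $Y_{-1}=\{y\circ A_{-1}^{-1}:y\in Y\}$---is exactly what the paper intends: it gives no written proof, stating only that the proposition is a direct consequence of Proposition \ref{prop:extT}. Your final transport-of-structure formulation is correct and complete.

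Two intermediate steps need repair, though. First, a bookkeeping slip: Proposition \ref{prop:extT}(a) produces $\tT_{-1}(t)\in\LLL(\xX_{-1})$, but $X_{-1}$ is \emph{not} a subspace of $\xX_{-1}$---the chain is $\xX_{-1}\subseteq X_{-1}\inj\xX_{-2}$---so one cannot restrict $\tT_{-1}(t)$ to $X_{-1}$. The operator that leaves $X_{-1}$ invariant is $\tT_{-2}(t)$ on $\xX_{-2}$, and $T_{-1}(t):=\tT_{-2}(t)|_{X_{-1}}$. Second, and more seriously, your verification of the semigroup law rests on the claim that ``$X_0$ is dense in $\xX_{-1}$ hence in $X_{-1}$''. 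This is false precisely in the situation the paper is built for: the closure of $X_0$ in $X_{-1}$ is exactly $\xX_{-1}$, which is a \emph{closed} subspace of $X_{-1}$ and is proper whenever $A$ is not densely defined, since $A_{-1}$ maps $\xX_0$ onto $\xX_{-1}$ and $X_0$ onto $X_{-1}$, so $X_{-1}/\xX_{-1}\cong X_0/\xX_0$. Hence the uniqueness-of-continuous-extension argument identifies $T_{-1}(t)T_{-1}(s)$ with $T_{-1}(t+s)$ only on $\xX_{-1}$, not on all of $X_{-1}$. The fix is one you already invoke later: Proposition \ref{prop:extT}(a) gives $T_{-1}(t)=A_{-1}T(t)A_{-1}^{-1}$ on all of $X_{-1}$, and the semigroup law and $T_{-1}(0)=I$ follow by conjugation with no density needed. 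Once the density argument is deleted in favour of the similarity, the remainder of your proof---the identification of the generator as $A_{-1}AA_{-1}^{-1}=A_{-1}$ with domain $A_{-1}(\dom(A))=X_0$, and the transported Laplace-transform identity for the pair $(X_{-1},Y_{-1})$---goes through.
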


\section{Bi-continuous semigroups}\label{sec:bicont}

In this section we concentrate on extrapolation spaces for generators of \emph{bi-continuous semigroups}. Such semigroups were introduced by K\"uhnemund in \cite{Ku} and possess generators as described in Section\ref{sec:sgrps}. The following assumptions, as proposed by K\"uhnemund, will be made during the whole section.

\begin{assumption}\label{asp:bicontspace} 
Consider a triple $(X_0,\|\cdot\|,\tau)$ where $X_0$ is a Banach space, 
\begin{num}
\item $\tau$ is a locally convex Hausdorff topology coarser than the norm-topology on $X_0$, i.e. the identity map $(X_0,\|\cdot\|)\to(X_0,\tau)$ is continuous;
\item $\tau$ is sequentially complete on the closed unit ball;
\item The dual space of $(X_0,\tau)$ is norming for $X_0$, i.e.,
\begin{equation}\label{eq:norm}
\|x\|=\sup_{\substack{\varphi\in(X_0,\tau)'\\\|\varphi\|\leq1}}{|\varphi(x)|}.\end{equation}
\end{num}
\end{assumption}

\begin{remark}
\label{rem:seminorm}\label{rem:pnorming}
\begin{num}
\item
There is a related notion of so-called Saks spaces, see \cite{CooperSaks}. By definition a \emph{Saks space} is a triple $(X_0,\|\cdot\|,\tau)$ such that $X_0$ is a vector space with a norm $\|\cdot\|$ and locally convex topology $\tau$ in such a way that $\tau$ is weaker than the $\|\cdot\|$-topology, but the closed unit ball is $\tau$-complete. In particular, $X_0$ is a Banach space.
\item There is also a connection to the the norming dual pairs we discussed in the previous section. In particular, $(X_0,Y)$ with $Y=(X_0,\tau)'$ is a norming dual pair.
\item Kraaij puts this setting in a more general framework of locally convex spaces with mixed topologies, see \cite[Sec.{} 4]{Kraaij2016}, and also \cite[App.{} A]{FaPHD}

\item Assumption \eqref{eq:norm} is equivalent to the following: There is a set $\semis$ of $\tau$-continuous seminorms defining the topology $\tau$, such that
\begin{equation}\label{eq:semisnorm}
\|x\|=\sup_{p\in\semis}p(x).
\end{equation}
This description is also used by Kraaij in \cite{Kraaij2016}, cf.{} his Lemma 4.4. Note also that by this remark and by Lemma 3.1 in \cite{CooperSaks} we see that a Saks space satisfies Assumption \ref{asp:bicontspace}.
Indeed, assume \eqref{eq:norm} and let $\semis$ be the collection of \emph{all} $\tau$-continuous seminorms $p$ such that $p(x)\leq \|x\|$. Then $|\varphi(\cdot)|\in \semis$ for each $\varphi\in(X_0,\tau)'$ with $\|\varphi\|\leq 1$, and \eqref{eq:semisnorm} is trivially satisfied. If $q$ is any $\tau$-continuous seminorm, then $q(x)\leq M\|x\|$ for some constant $M$ and for all $x\in X_0$. So that $q/M\in \semis$, proving that $\semis$ defines precisely the topology $\tau$. For the converse implication suppose that \eqref{eq:semisnorm} holds. Then by the application of the Hahn--Banach theorem we obtain \eqref{eq:norm}. 
\end{num}
\end{remark}

Now we are in state to formulate the definition of a bi-continuous semigroup.

\begin{definition}[K\"uhnemund \cite{Ku}]\label{def:bicontsemi}
Let $X_0$ be a Banach space with norm $\|\cdot\|$ together with a locally convex topology $\tau$, such that conditions in Assumption \ref{asp:bicontspace} are satisfied. We call $(T(t))_{t\geq0}$ a \emph{bi-continuous semigroup} if
\begin{num}
\item $ T(t+s)=T(t)T(s)$ and $T(0)=I$ for all $s,t\geq 0$.
\item $(T(t))_{t\geq0}$ is strongly $\tau$-continuous, i.e. the map $\varphi_x:[0,\infty)\to(X_0,\tau)$ defined by $\varphi_x(t)=T(t)x$ is continuous for every $x\in X_0$.
\item $(T(t))_{t\geq0}$ is exponentially bounded, i.e., has type $(M,\omega)$ for some $M\geq 1$ and $\omega\in \RR$.
\item $(T(t))_{t\geq0}$ is locally-bi-equicontinuous, i.e., if $(x_n)_{n\in\NN}$ is a norm-bounded sequence in $X_0$ which is $\tau$-convergent to $0$, then also $(T(s)x_n)_{n\in\NN}$ is $\tau$-convergent to $0$ uniformly for $s\in[0,t_0]$ for each fixed $t_0\geq0$.
\end{num}
\end{definition}

As in the case of $C_0$-semigroups we can define a generator for a bi-continuous semigroup in the following way:

\begin{definition}Let $(T(t))_{t\geq0}$ be a bi-continuous semigroup on $X_0$. The generator $A$ is defined by
\[Ax:=\tlim_{t\to0}{\frac{T_0(t)x-x}{t}}\] with the domain 
\[\dom(A):=\Bigl\{x\in X_0:\ \tlim_{t\to0}{\frac{T(t)x-x}{t}}\ \text{exists and} \ \sup_{t\in(0,1]}{\frac{\|T(t)x-x\|}{t}}<\infty\Bigr\}.\]
\end{definition}

This definition of the generator leads to a couple of important properties and important examples in this context are evolution semigroups on $\BC(\RR,X)$, semigroups induced by flows, adjoint semigroups and the Ornstein--Uhlenbeck semigroup on $\BC(\mathcal{H})$. 

The following theorem sums up some properties of bi-continuous semigroups and their generators (see \cite{Ku},\cite{FaStud}):

\begin{theorem}
Let $(T(t))_{t\geq0}$ be a bi-continuous semigroup with generator $A$. Then the following hold:
\begin{abc}
\item $A$ is bi-closed, i.e., whenever $x_n\stackrel{\tau}{\to}x$ and $Ax_n\stackrel{\tau}{\to}y$ and both sequences are norm-bounded, then $y\in\dom(A)$ and $Ax=y$.
\item $\dom(A)$ is bi-dense in $X_0$, i.e., for each $x\in X_0$ there exists a norm-bounded sequence $(x_n)_{n\in\NN}$ in $\dom(A)$ such that $x_n\stackrel{\tau}{\to}x$.
\item For $x\in\dom(A)$ we have $T(t)x\in\dom(A)$ and $T(t)Ax=AT(t)x$ for all $t\geq0$.
\item For $t>0$ and $x\in X_0$ one has \begin{align}\int_0^t{T(s)x\ \dd s}\in\dom(A)\ \ \text{and}\ \ A\int_0^t{T(s)x\ \dd s}=T(t)x-x \end{align}
\item For $\lambda>\omega_0(T)$ one has $\lambda\in\rho(A)$ (thus $A$ is closed) and for $x\in X$ holds: \begin{align}\label{eq:bicontlaplace}
R(\lambda,A)x=\int_0^{\infty}{\ee^{-\lambda s}T(s)x\ \dd s}\end{align} where the integral is a $\tau$-improper integral.
\end{abc}
\end{theorem}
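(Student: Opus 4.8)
The plan is to verify the five listed properties (a)--(e) in a convenient logical order, starting from the Laplace-transform characterization \eqref{eq:laplace} which is part of Assumption \ref{asp:sgrps} in the semigroup-generator section, but here derived directly from bi-continuity. First I would establish (e): for $\lambda$ with $\mathrm{Re}(\lambda)>\omega_0(T)$ the $\tau$-improper integral $R(\lambda)x:=\tlim_{b\to\infty}\int_0^b \ee^{-\lambda s}T(s)x\,\dd s$ exists for each $x\in X_0$, because the local bi-equicontinuity together with the exponential bound forces the tails to be $\tau$-Cauchy on the (rescaled) unit ball, and $\tau$ is sequentially complete on balls; then a standard manipulation $(\ee^{\lambda h}-1)\int_0^b\ee^{-\lambda s}T(s)x\,\dd s = \ee^{\lambda h}\int_h^{b+h}\ee^{-\lambda s}T(s)x\,\dd s$ combined with passage to the limit shows $R(\lambda)x\in\dom(A)$ and $(\lambda-A)R(\lambda)x=x$, while $R(\lambda)(\lambda-A)x=x$ for $x\in\dom(A)$ follows from commuting the integral with $A$ using bi-closedness. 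This simultaneously proves $\lambda\in\rho(A)$ and formula \eqref{eq:bicontlaplace}, and incidentally that $A$ is closed (which also follows from (a)).

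Next I would prove (a), bi-closedness: if $x_n\xrightarrow{\tau}x$ and $Ax_n\xrightarrow{\tau}y$ with both sequences norm-bounded, apply $R(\lambda,A)$ (which is $\tau$-continuous on norm-bounded sets by the integral representation and local bi-equicontinuity) to $\lambda x_n - Ax_n = (\lambda-A)x_n$; one gets $x_n = R(\lambda,A)(\lambda x_n - Ax_n)\xrightarrow{\tau} R(\lambda,A)(\lambda x - y)$, hence $x = R(\lambda,A)(\lambda x - y)\in\dom(A)$ and $(\lambda-A)x=\lambda x - y$, i.e.\ $Ax=y$. For (c), let $x\in\dom(A)$; then $T(t)x\in\dom(A)$ and $AT(t)x=T(t)Ax$ follow by applying $T(t)$ to the difference quotients and using that $T(t)$ is $\tau$-continuous and maps norm-bounded sets to norm-bounded sets (exponential boundedness), so the $\tau$-limit defining $Ax$ is preserved; alternatively, observe $R(\lambda,A)$ commutes with $T(t)$ (which I would check from the Laplace representation and the semigroup law, shifting the variable of integration) and deduce the statement for the generator.

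Then (d): for $x\in X_0$ and $t>0$, the Riemann-type $\tau$-integral $\int_0^t T(s)x\,\dd s$ exists by strong $\tau$-continuity and the completeness-on-balls assumption; writing the difference quotient
\[
\frac{1}{h}\Bigl(T(h)-I\Bigr)\int_0^t T(s)x\,\dd s = \frac{1}{h}\int_t^{t+h}T(s)x\,\dd s - \frac{1}{h}\int_0^h T(s)x\,\dd s,
\]
both terms converge in $\tau$ as $h\downto 0$, to $T(t)x-x$, and the norm-boundedness of the difference quotients is controlled by $\sup_{s\in[0,t+1]}\|T(s)\|\cdot\|x\|$; hence the integral lies in $\dom(A)$ with $A\int_0^t T(s)x\,\dd s = T(t)x-x$. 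Finally (b), bi-density of $\dom(A)$: given $x\in X_0$, set $x_n := n\int_0^{1/n}T(s)x\,\dd s\in\dom(A)$ by (d); these are norm-bounded by $\sup_{s\in[0,1]}\|T(s)\|\cdot\|x\|$, and $x_n\xrightarrow{\tau}x$ because for every defining seminorm $p\in\semis$ one has $p(x_n-x)\le n\int_0^{1/n}p(T(s)x-x)\,\dd s\to 0$ by strong $\tau$-continuity of $s\mapsto T(s)x$ at $0$.

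I expect the main obstacle to be the careful handling of the $\tau$-improper and $\tau$-Riemann integrals in step (e) and (d): one must show the relevant nets (or the sequences along rational partitions) are $\tau$-Cauchy \emph{within a norm-bounded set} so that sequential completeness on balls applies, and must verify that $\tau$-continuous linear maps such as $T(t)$ and $R(\lambda,A)$ may be pulled inside these integrals — this is where local bi-equicontinuity is essential rather than mere $\tau$-continuity. Once the integral calculus is set up rigorously, properties (a)--(c) are formal consequences, mirroring the $C_0$-semigroup arguments.
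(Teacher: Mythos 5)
The paper does not actually prove this theorem: it is quoted as a summary of known facts, with the proof delegated to K\"uhnemund \cite{Ku} and \cite{FaStud}. Your reconstruction follows essentially that standard route---set up the $\tau$-Riemann and $\tau$-improper integrals using sequential completeness on norm-bounded sets, establish the Laplace representation (e), and derive (a)--(d) by the same difference-quotient manipulations as in the $C_0$-case, with local bi-equicontinuity invoked whenever an operator must be pulled through a $\tau$-limit of a norm-bounded sequence. The outline is sound, and you correctly identify that bi-equicontinuity (not mere $\tau$-continuity) is the essential ingredient for interchanging $T(t)$ or $R(\lambda,A)$ with these limits.

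Two concrete points need repair. First, there is a circularity in your ordering: you obtain the identity $R(\lambda)(\lambda-A)x=x$ for $x\in\dom(A)$ (the second half of (e)) ``by commuting the integral with $A$ using bi-closedness'', but your proof of bi-closedness (a) rests on exactly this identity, via $x_n=R(\lambda,A)(\lambda x_n-Ax_n)$. The standard way to break the circle is to prove first only $(\lambda-A)R(\lambda)=\Id$ on $X_0$, then show by a change of variables that $R(\lambda)$ commutes with each $T(t)$ and is sequentially $\tau$-continuous on norm-bounded sets, deduce $R(\lambda)\dom(A)\subseteq\dom(A)$ and $AR(\lambda)x=R(\lambda)Ax$ by applying $R(\lambda)$ to the difference quotients, and only then conclude $R(\lambda)(\lambda-A)x=x$; with injectivity of $\lambda-A$ in hand, (a) follows as you describe. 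Second, the displayed identity $(\ee^{\lambda h}-1)\int_0^b\ee^{-\lambda s}T(s)x\ \dd s=\ee^{\lambda h}\int_h^{b+h}\ee^{-\lambda s}T(s)x\ \dd s$ is false as written; its left-hand side should be $T(h)\int_0^b\ee^{-\lambda s}T(s)x\ \dd s$. Also note that whenever you claim membership in $\dom(A)$ (in (d) and (e)) you must verify, as you do in (d), the norm-boundedness clause $\sup_{t\in(0,1]}\|T(t)x-x\|/t<\infty$ built into the definition of the generator. These are organizational rather than conceptual defects: once reordered, your argument is the correct one from the cited references.
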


 Recall the following result of K\"uhnemund from \cite{Ku}, whose proof is originally based on integrated semigroups. We present here a short proof based on extrapolation spaces.

\begin{theorem}[K\"uhnemund]\label{thm:bistrong}
Let $(X_0,\|\cdot\|,\tau)$ be a triple satisfying Assumption \ref{asp:bicontspace}, and let $A$ be a linear operator on the Banach space $X_0$. The following are equivalent:
\begin{iiv}
\item $A$ is the generator of a bi-continuous semigroup $(T(t))_{t\geq0}$ of type $(M,\omega)$.
\item $A$ is a Hille--Yosida operator of type $(M,\omega)$,
i.e.,
\[
\|R(s,A)^{k}\|\leq\frac{M}{(s-\omega)^k}
\]
for all $k\in\NN$ and for all $s>\omega$.
$A$ is bi-densely defined and the family
\begin{equation}\label{eq:resbiequi}
\bigl\{(s-\alpha)^kR(s,A)^k:\ k\in\NN,\ s\geq\alpha\bigr\}
\end{equation}
 is bi-equicontinuous for each $\alpha>\omega$, meaning that for each norm bounded $\tau$-null sequence $(x_n)$ one has $(s-\alpha)^kR(s,A)^kx_n\to 0$ in $\tau$ uniformly for $k\in\NN$ and $s\geq\alpha$ as $n\to \infty$.
\end{iiv}
In this case we have the Euler formula
\[
T(t)x:=\tlim_{m\to\infty}\left(\frac{m}{t}R\left(\frac{m}{t},A\right)\right)^mx
\]
for each $x\in X_0$. Moreover, the subspace $\xX_0:=\overline{\dom(A)}\subseteq X_0$ is the space of norm strong continuity for $(T(t))_{t\geq0}$, it is invariant under the semigroup, and $(\tT(t))_{t\geq0}:=(T(t)|_{\xX_0})_{t\geq0}$ is the strongly continuous semigroup on $\xX_0$ generated by the part of $A$ in $\xX_0$.
\end{theorem}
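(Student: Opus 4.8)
The plan is to prove the equivalence (i) $\Leftrightarrow$ (ii) by reducing everything to the classical $C_0$-semigroup theory on the subspace $\xX_0 = \overline{\dom(A)}$, using the extrapolation machinery of Section \ref{sec:invert}. The implication (i) $\Rightarrow$ (ii) is essentially a repackaging of the properties collected in the preceding theorem: bi-density of $\dom(A)$ and bi-closedness give the Hille--Yosida resolvent estimates (iterate \eqref{eq:bicontlaplace} and use $\|T(t)\|\le M\ee^{\omega t}$), and the local bi-equicontinuity of the semigroup, pushed through the Laplace/resolvent representation, yields the bi-equicontinuity of the resolvent family \eqref{eq:resbiequi}. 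So the substantive direction is (ii) $\Rightarrow$ (i), and here is where the extrapolation viewpoint does the work.

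First I would rescale, so WLOG $\omega < 0$ and $A$ has negative growth bound with $0 \in \rho(A)$; renorm so that $(s-\omega)^k R(s,A)^k$ are uniformly bounded, i.e. the to-be-constructed semigroup is quasi-contractive. The part $\aA = A|_{\xX_0}$ is, by the Hille--Yosida estimates restricted to $\xX_0$ together with bi-density, a densely defined Hille--Yosida operator on the Banach space $\xX_0$, hence by the classical Hille--Yosida theorem it generates a $C_0$-semigroup $(\tT(t))_{t\ge 0}$ on $\xX_0$, given by the Euler formula $\tT(t)x = \lim_{m\to\infty}(\tfrac{m}{t}R(\tfrac{m}{t},\aA))^m x$ in norm, for $x \in \xX_0$. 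The point is now to extend this semigroup from $\xX_0$ back to all of $X_0$. For this I invoke Proposition \ref{prop:extT}: $\xX_{-1}(\aA)$ is an extrapolation space containing $X_0$ as a (not necessarily dense in $X_0$, but $\xX_0$-dense-image) subspace via the isometry $\aA_{-1}$, and by Proposition \ref{prop:altercompl} and the constructions there, $X_0 \hookrightarrow \xX_{-1}$. Since each $\tT(t)$ commutes with $\aA^{-1}$ on $\xX_0$, Proposition \ref{prop:extT}(a) gives extensions $\tT_{-1}(t) \in \LLL(\xX_{-1})$; the key claim is that these leave $X_0$ invariant, which is exactly Proposition \ref{prop:extT}(b) once one checks that each $\tT(t)$ leaves $\dom(\aA)$ invariant and commutes with $\aA^{-1}$ — both follow from $T(t)Ax = AT(t)x$ for $x \in \dom(A)$ (from the preceding theorem, part (c)) applied to $\aA$. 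Define $T(t) := \tT_{-1}(t)|_{X_0}$; then $(T(t))_{t\ge 0}$ is a semigroup on $X_0$ restricting to $(\tT(t))_{t\ge 0}$ on $\xX_0$.

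It remains to verify that $(T(t))_{t\ge 0}$ is bi-continuous with generator $A$, and that $\xX_0$ is precisely the space of norm-strong continuity. For the strong $\tau$-continuity and local bi-equicontinuity I would argue through the Euler approximants: by bi-equicontinuity of the family \eqref{eq:resbiequi}, the approximants $(\tfrac{m}{t}R(\tfrac{m}{t},A))^m$ converge $\tau$-strongly, uniformly on compact $t$-intervals, and the limit must be $T(t)$ (it agrees with $\tT(t)$ on $\xX_0$, and both are determined by the resolvent, which is injective by the identification of $X_0$ inside $\xX_{-1}$); a standard bi-equicontinuity argument then transfers $\tau$-continuity and the locally-bi-equicontinuity property from the resolvent family to $(T(t))_{t\ge 0}$. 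Exponential boundedness $\|T(t)\| \le M\ee^{\omega t}$ follows either from the Euler formula and the renorming, or by passing $\|\tT_{-1}(t)\|$ bounds back to $X_0$. That the generator is $A$: one computes $R(\lambda, \text{gen}) = \int_0^\infty \ee^{-\lambda s}T(s)\cdot\, \dd s$ as a $\tau$-integral and checks it equals $R(\lambda, A)$, which pins down the generator since $\rho \ne \emptyset$; alternatively, on $\xX_0$ the generator is $\aA$, and bi-closedness of the candidate generator together with bi-density of $\dom(A)$ forces it to be all of $A$. Finally, $\xX_0 = \overline{\dom(A)}$ being the space of norm strong continuity follows from Lemma \ref{lem:spacecont}: $X_{\conti} = \overline{\dom(A)}$ once Assumption \ref{asp:sgrps} is verified for $(T(t))_{t\ge 0}$, and on $\xX_0$ the semigroup is the $C_0$-semigroup $(\tT(t))_{t\ge 0}$.

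The main obstacle I anticipate is the identification step: making precise that the extrapolated extension $\tT_{-1}(t)|_{X_0}$ is genuinely the semigroup whose generator is $A$ (and not merely some extension of $\aA$), and that it is $\tau$-strongly continuous rather than just norm-measurable or $\tau$-continuous on $\xX_0$. This requires carefully feeding the bi-equicontinuity hypothesis \eqref{eq:resbiequi} into the Euler-formula limit and knowing that $\tau$-limits of norm-bounded nets behave well — in other words, using Assumption \ref{asp:bicontspace}(2) (sequential completeness of the unit ball for $\tau$) at the right moment. Everything else is bookkeeping within the scale $\xX_0 \hookrightarrow X_0 \hookrightarrow \xX_{-1}$ already set up.
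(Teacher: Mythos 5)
Your route is essentially the paper's: reduce to the $C_0$-semigroup $(\tT(t))_{t\ge0}$ generated by the part of $A$ in $\xX_0$, use the bi-equicontinuity hypothesis \eqref{eq:resbiequi} to push the Euler formula to a $\tau$-limit $S(t)$ on all of $X_0$, extend $\tT(t)$ to $X_0$ via Proposition \ref{prop:extT}(b), identify the extension with $S(t)$, and pin down the generator through the Laplace representation of the resolvent and bi-density. One step is justified incorrectly, though. To invoke Proposition \ref{prop:extT}(b) you must show that $\tT(t)$ leaves $\dom(A)$ invariant --- the \emph{full} domain of the possibly non-densely-defined $A$ --- not merely $\dom(\aA)=\{x\in\dom(A):Ax\in\xX_0\}$. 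Invariance of $\dom(\aA)$ is standard $C_0$-theory, but it does not yield invariance of $\dom(A)$, and appealing to part (c) of the preceding theorem is circular, since that statement concerns the bi-continuous semigroup generated by $A$, which is exactly what you are constructing. The paper closes this by writing $x=A^{-1}y$ with $y\in X_0$ arbitrary and passing $A^{-1}$ through the $\tau$-Euler limit, $\tT(t)x=\tlim_m F(t/m)^mA^{-1}y=A^{-1}S(t)y\in\dom(A)$, using the $\tau$-continuity of $A^{-1}$ on norm-bounded sequences; since you already construct $S(t)$ on all of $X_0$, this repair fits inside your argument, but it is the one place where the hypothesis \eqref{eq:resbiequi} genuinely carries the extension from $\xX_0$ to $X_0$ and cannot be replaced by bookkeeping.
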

\begin{proof}
That $\xX_0$ is the space of norm strong continuity for a bi-continuous semigroup $(T(t))_{t\geq 0}$ follows from Lemma \ref{lem:spacecont}. 

\medskip\noindent 
 We only prove the implication (ii) $\Rightarrow$ (i) and Euler formula; the other implication is standard. We may suppose that $\omega<0$. Since $A$ is a Hille--Yosida operator, the part $\aA_0$ of $A$ in $\xX_0$ generates a $C_0$-semigroup $(\tT(t))_{t\geq0}$ on the space $\xX_0:=\overline{\dom(A)}$. Define the function
\[
F(s):=\begin{cases}
 \frac1sR(\frac1s,A)&\text{for } s>0,\\
\Id&\text{for } s=0,
\end{cases}
\]
which is strongly continuous on $\xX_0$ by Remark \ref{rem:2}. Moreover, we have the Euler formula
\[
\tT_0(t)x=\lim_{m\to\infty}{F\bigl(\tfrac{t}{m}\bigr)^mx}
\]
 for $x\in \xX_0$ with convergence being uniform for $t$ in compact intervals $[0,t_0]$, see, e.g., \cite[Section~III.5(a)]{EN}. Since $R(\lambda,A)|_{\xX_0}=R(\lambda, \aA_0)$ and since $\dom(A)$ is bi-dense in $X_0$, by the local bi-equicontinuity assumption in \eqref{eq:resbiequi} we conclude that for $x\in X_0$ and $t>0$ the limit
\begin{equation}\label{eq:taueuler}
S(t)x:=\tlim_{m\to\infty}F\bigl(\tfrac{t}{m}\bigr)^mx
\end{equation}
exists, and the convergence is uniform for $t$ in compact intervals $[0,t_0]$. It follows that $t\mapsto S(t)x$ is $\tau$-strongly continuous for each $x\in X_0$. The operator family $(S(t))_{t\geq 0}$ is locally bi-equicontinuous because of the bi-equicontinuity assumption in \eqref{eq:resbiequi}.

\medskip\noindent Next, we prove that $\tT(t)$ leaves $\dom(A)$
invariant. Let $x\in \dom(A)$, so that $x=A^{-1}y$ for some $y\in X_0$, and insert $x$ in the previous formula \eqref{eq:taueuler} to obtain 
\begin{equation}\label{eq:invA}
\tT(t)x=S(t)A^{-1}y=\tlim_{m\to\infty}F\bigl(\tfrac{t}{m}\bigr)^m A^{-1}y=A^{-1}\tlim_{n\to\infty}F\bigl(\tfrac{t}{m}\bigr)^m y=A^{-1}S(t)y\in\dom(A),
\end{equation}
where we have used the bi-continuity of $A^{-1}$ and the boundedness of $(\bigl[\tfrac{n}{t}R\bigl(\tfrac{n}t,A\bigr)\bigr]^ny)_{n\in \NN}$. By Proposition \ref{prop:extT} (b) we can extend $\tT(t)$ to $X_0$ by setting $T(t):=A\tT(t)A^{-1}\in \LLL(X_0)$. It follows that $(T(t))_{t\geq 0}$ is a semigroup. By formula \eqref{eq:invA}, we have then $T(t)y=A\tT(t)A^{-1}y=AA^{-1}S(t)y=S(t)y$ for each $y\in X_0$. So that $(T(t))_{t\geq 0}$, coinciding with $(S(t))_{t\geq0}$, is locally bi-equicontinuous, and hence a bi-continuous semigroup.

\medskip It remains to show that the generator of $(T(t))_{t\geq 0}$ is precisely $A$. Let $B$ denote the generator of $(T(t))_{t\geq 0}$. Then, for large $\lambda>0$ and $x\in \xX_0$, we have
\[
R(\lambda,B)x=\int_0^\infty \ee^{-\lambda s}T(s)x\ \dd s=\int_0^\infty \ee^{-\lambda s}\tT(s)x\ \dd s=R(\lambda, \aA_0)x=R(\lambda,A)x.
\]
Since $R(\lambda,B)$ and $R(\lambda,A)$ are sequentially $\tau$-continuous on norm bounded sets and since $\dom(A)$ is bi-dense in $X_0$, we obtain $R(\lambda,B)=R(\lambda,A)$. This finishes the proof.
\end{proof}


The first statement in the next proposition is proved by Nagel and Sinestrari, see \cite{NagelSurvey} and \cite{nagel1993inhomogeneous}, while the second one follows directly from the results in Section \ref{sec:invert}.

\begin{proposition}
Let $A$ be Ha Hille--Yosida operator on the Banach space $X_0$ with domain $\dom(A)$. Denote by $(\tT(t)_{t\geq 0}$ the $C_0$-semigroup on $\xX_0=\overline{\dom(A)}$ generated by the part $\aA$ of $A$.
\begin{abc}
\item There is a one-parameter semigroup $(\TT(t))_{t\geq 0}$ on $\Fav_0(A)$ which extends $(\tT(t)_{t\geq 0}$. This semigroup is strongly continuous for the $\|\cdot\|_{-1}$ norm.
\item Suppose for each $t\geq 0$ the operator $\tT(t)$ leaves $\dom(A)$ invariant. The space $X_0$ is invariant under the semigroup operators $\TT(t)$ for every $t\geq 0$, i.e., $T(t)\in\LLL(X_0)$ for $T(t):=\TT(t)|_{X_0}$.
\end{abc}
\end{proposition}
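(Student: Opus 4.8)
The plan is to deduce both statements from the extrapolation machinery of Section \ref{sec:invert}, in particular from Proposition \ref{prop:extT} and the Favard-space identification $\Fav_0(A)=\Fav_1(\aA_{-1})$ together with the inclusion $\xX_0\subseteq X_0\subseteq \xX_{-1}$. After rescaling we may assume $A$ invertible with negative growth bound, so that $\aA$ generates a $C_0$-semigroup $(\tT(t))_{t\geq 0}$ on $\xX_0$ and the extrapolated semigroup $(\tT_{-1}(t))_{t\geq 0}$ lives on $\xX_{-1}$ as the unique norm-continuous extension of $(\tT(t))_{t\geq 0}$ (this is the classical $C_0$-construction recalled in \cite[Section II.5]{EN}, and is exactly what Proposition \ref{prop:extT}(a) produces since each $\tT(t)$ commutes with $\aA^{-1}$).

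For part (a): the semigroup $(\TT(t))_{t\geq 0}$ is simply the restriction of $(\tT_{-1}(t))_{t\geq 0}$ to $\Fav_0(A)=\Fav_1(\aA_{-1})\subseteq \xX_{-1}$. First I would note that $\Fav_0(A)$ is $\tT_{-1}(t)$-invariant: this follows from the proposition on $T$-invariance of Favard spaces (the one asserting $\Fav_\alpha$ is invariant under bounded operators commuting with $A^{-1}$), applied with $A$ replaced by $\aA_{-1}$ and $T$ replaced by $\tT_{-1}(t)$, since $\tT_{-1}(t)$ commutes with $\aA_{-1}^{-1}=\aA_0^{-1}$ extended. The restricted operators form a semigroup because $(\tT_{-1}(t))$ does, and they extend $(\tT(t))_{t\geq 0}$ because $\xX_0\subseteq \Fav_0(A)$ (shown in the earlier Proposition, $X_0$ hence $\xX_0$ embeds in $\Fav_0(A)$, and on $\xX_0$ the operators $\tT_{-1}(t)$ agree with $\tT(t)$). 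Strong continuity for $\|\cdot\|_{-1}=\|\cdot\|_{\xX_{-1}}$ on $\Fav_0(A)$ is inherited from the $C_0$-property of $(\tT_{-1}(t))$ on all of $\xX_{-1}$: for $x\in \Fav_0(A)\subseteq \xX_{-1}$ we have $\|\tT_{-1}(t)x-x\|_{\xX_{-1}}\to 0$ as $t\downarrow 0$ because $(\tT_{-1}(t))_{t\geq 0}$ is a $C_0$-semigroup on $\xX_{-1}$.

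For part (b): this is the place where Proposition \ref{prop:extT}(b) does the work. Under the hypothesis that $\tT(t)$ leaves $\dom(A)$ invariant and commutes with $\aA^{-1}$, part (b) gives $\tT_{-1}(t)x=A\tT(t)A^{-1}x$ for every $x\in X_0$, and in particular $\tT_{-1}(t)$ maps $X_0$ into $X_0$. Since $T(t):=\TT(t)|_{X_0}=\tT_{-1}(t)|_{X_0}=A\tT(t)A^{-1}$ with $A^{-1}:X_0\to\dom(A)$, $\tT(t):\dom(A)\to\dom(A)$ and $A:\dom(A)\to X_0$ all bounded between the relevant spaces, we get $T(t)\in\LLL(X_0)$, and the semigroup law on $X_0$ is the restriction of that on $\Fav_0(A)$.

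The main obstacle — really the only point needing care — is matching the two descriptions of the extrapolated semigroup: the ``$C_0$-theoretic'' $(\tT_{-1}(t))_{t\geq 0}$ on $\xX_{-1}$ from \cite[Section II.5]{EN} versus the abstract extension $\tT_{-1}(t)$ from Proposition \ref{prop:extT}(a) of the operators $\tT(t)$ acting on $\xX_0$. Both are, by uniqueness of continuous extensions from the $\|\cdot\|_{\xX_{-1}}$-dense subspace $\xX_0$, the same operator family; once this identification is in place the statements are immediate, and indeed the proposition is asserted in the excerpt to ``follow directly from the results in Section \ref{sec:invert}''. So the write-up is short: assemble the invariance of $\Fav_0(A)$, the inclusion $\xX_0\subseteq \Fav_0(A)$, the $C_0$-property on $\xX_{-1}$, and then cite Proposition \ref{prop:extT}(b) verbatim for (b).
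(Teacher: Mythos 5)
Your argument is correct and follows the route the paper intends: the paper itself gives no written proof, attributing part (a) to Nagel--Sinestrari (whose argument is precisely your restriction of the extrapolated $C_0$-semigroup $(\tT_{-1}(t))_{t\geq 0}$ on $\xX_{-1}$ to the invariant subspace $\Fav_0(A)=\Fav_1(A_{-1})\subseteq\xX_{-1}$) and deriving part (b) from Proposition \ref{prop:extT}(b) exactly as you do. Your identification of the two constructions of $\tT_{-1}(t)$ via uniqueness of continuous extensions from the dense subspace $\xX_0$ is the right way to glue the pieces together, so nothing is missing.
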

%

\subsection{Extrapolated semigroups}
Next we extend a a bi-continuous semigroup on $X_0$ to the extrapolation space $X_{-1}$ as a bi-continuous semigroup. We have to handle two topologies, and the next proposition provides means to describe an additional locally convex topology on $X_{-1}$ still satisfying Assumption \ref{asp:bicontspace}. 
The definition of extrapolated spaces can be based on this proposition as was suggested by Haase \cite{Haase}.

\begin{proposition}\label{prop:Edef}
Let the triple $(X_0,\|\cdot\|,\tau)$ satisfy Assumption \ref{asp:bicontspace}, let $\semis$ be as in Remark \ref{rem:seminorm}, let $E$ be a vector space over $\CC$, and let $B:X_0\to E$ be a bijective linear mapping.
We define for $e\in E$ and $p\in \semis$
\[
\|e\|_E:=\|B^{-1}e\|,\quad\text{and}\quad p_{E}(e):=p(B^{-1}e).
\]
Then the following assertions hold:
\begin{abc}
\item $\|\cdot\|_E$ is a norm, $p_E$ is a seminorm for each $p\in \semis$.
\item With $\semis_E:=\{p_E:p\in \semis\}$ an with $\tau_E$ being the topology defined by $\semis_E$, the triple $(E,\|\cdot\|_E,\tau_E)$ satisfies the conditions in Assumption \ref{asp:bicontspace}.
\item If $(T(t))_{t\geq 0}$ is a bi-continuous semigroup on $X_0$ with respect to the topology $\tau$, then by $T_E(t):=BT(t)B^{-1}$ we define a bi-continuous semigroup on $E$. If $A$ is the generator of $(T(t))_{t\geq 0}$, then $BAB^{-1}$ is the generator of $(T_E(t))_{t\geq 0}$.
\end{abc}
\end{proposition}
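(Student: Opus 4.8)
The plan is to verify the three assertions in order, each reducing to a transport of structure along the linear bijection $B$. For part (a), since $B$ is a linear bijection, $B^{-1}$ is linear, so $\|e\|_E=\|B^{-1}e\|$ inherits positive-definiteness (using injectivity of $B^{-1}$), homogeneity and the triangle inequality from $\|\cdot\|$; likewise each $p_E=p\circ B^{-1}$ is a seminorm because composition of a seminorm with a linear map is a seminorm. This step is entirely routine.

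For part (b), I would check the three conditions of Assumption \ref{asp:bicontspace} in turn. Condition (1): $\tau_E$ is coarser than the $\|\cdot\|_E$-topology because for $p\in\semis$ we have $p_E(e)=p(B^{-1}e)\leq \|B^{-1}e\|=\|e\|_E$ (using $p\leq\|\cdot\|$ from Remark \ref{rem:seminorm}), so each generating seminorm of $\tau_E$ is $\|\cdot\|_E$-continuous; one also notes $\tau_E$ is Hausdorff since $\|\cdot\|_E=\sup_{p\in\semis}p_E$ separates points. Condition (3) (norming): by \eqref{eq:semisnorm} applied in $X_0$, for $e\in E$ we get $\|e\|_E=\|B^{-1}e\|=\sup_{p\in\semis}p(B^{-1}e)=\sup_{p_E\in\semis_E}p_E(e)$, which is exactly \eqref{eq:semisnorm} for $(E,\|\cdot\|_E,\tau_E)$, hence \eqref{eq:norm} holds. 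Condition (2) (sequential $\tau$-completeness of the closed unit ball) is the one point that needs a genuine argument: given a $\|\cdot\|_E$-norm-bounded sequence $(e_n)$ that is $\tau_E$-Cauchy, the preimages $x_n:=B^{-1}e_n$ form a $\|\cdot\|$-bounded sequence in $X_0$ that is $\tau$-Cauchy (since $p(x_n-x_m)=p_E(e_n-e_m)$ for all $p\in\semis$); by Assumption \ref{asp:bicontspace}(2) for $X_0$ it $\tau$-converges to some $x\in X_0$ with $\|x\|\leq\liminf\|x_n\|$, and then $e:=Bx$ satisfies $e_n\to e$ in $\tau_E$ with $\|e\|_E\leq\liminf\|e_n\|$. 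Thus the closed unit ball of $E$ is sequentially $\tau_E$-complete.

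For part (c), I would first observe that $T_E(\cdot)$ is a semigroup: $T_E(t)T_E(s)=BT(t)B^{-1}BT(s)B^{-1}=BT(t+s)B^{-1}=T_E(t+s)$ and $T_E(0)=BB^{-1}=I$. Each $T_E(t)$ is bounded on $(E,\|\cdot\|_E)$ with the same norm as $T(t)$ on $X_0$, since $\|T_E(t)e\|_E=\|T(t)B^{-1}e\|\leq\|T(t)\|\,\|B^{-1}e\|=\|T(t)\|\,\|e\|_E$, so exponential boundedness of type $(M,\omega)$ transfers directly. Strong $\tau_E$-continuity: for fixed $e\in E$ and $p\in\semis$, $p_E(T_E(t)e-T_E(t_0)e)=p(T(t)B^{-1}e-T(t_0)B^{-1}e)\to 0$ as $t\to t_0$ by strong $\tau$-continuity of $(T(t))$ at the point $B^{-1}e$. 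Local bi-equicontinuity: if $(e_n)$ is $\|\cdot\|_E$-bounded and $\tau_E$-null, then $(B^{-1}e_n)$ is $\|\cdot\|$-bounded and $\tau$-null, so by local bi-equicontinuity of $(T(t))$, $(T(s)B^{-1}e_n)$ is $\tau$-null uniformly for $s\in[0,t_0]$, which says precisely that $(T_E(s)e_n)$ is $\tau_E$-null uniformly for $s\in[0,t_0]$. Finally, for the generator: if $A$ generates $(T(t))$ and $C$ denotes the generator of $(T_E(t))$, then for large $\lambda$ and $e\in E$ the $\tau_E$-improper integral $\int_0^\infty \ee^{-\lambda s}T_E(s)e\,\dd s$ equals $B\int_0^\infty\ee^{-\lambda s}T(s)B^{-1}e\,\dd s=BR(\lambda,A)B^{-1}e$ (the integral commuting with the bounded operators $B$, $B^{-1}$ and with the seminorms), so $R(\lambda,C)=BR(\lambda,A)B^{-1}=R(\lambda,BAB^{-1})$, and since the resolvent determines the operator, $C=BAB^{-1}$.

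The main obstacle is the verification of Assumption \ref{asp:bicontspace}(2) for $(E,\tau_E)$, i.e.\ sequential completeness of the unit ball; everything else is a transparent transport along $B$. The subtle point there is that $\tau_E$-convergence is characterized \emph{seminorm-wise} by the family $\semis_E$ rather than through the abstract dual, so one must be slightly careful that a $\tau_E$-Cauchy sequence pulls back to a genuinely $\tau$-Cauchy sequence in $X_0$; this is immediate from $p_E(e_n-e_m)=p(B^{-1}(e_n-e_m))$, but it is the one place where the explicit description of $\tau_E$ via $\semis_E$ (rather than merely "$B$ is a homeomorphism", which one does not yet know) is used.
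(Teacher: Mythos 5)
Your proof is correct and follows essentially the same route as the paper: transport of the norm, seminorms, and semigroup structure along $B$, verification of Assumption \ref{asp:bicontspace} seminorm-wise, and identification of the generator via the Laplace-transform formula \eqref{eq:bicontlaplace}. You spell out the sequential completeness of the unit ball (which the paper dismisses as obvious) and you correctly arrive at $BAB^{-1}$ for the generator, consistent with the statement.
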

\begin{proof}
Assertion (a) is evident. The conditions (1){} and (2){} from Assumption \ref{asp:bicontspace} are obviously satisfied by the definition of $\|\cdot\|_E$ and $p_E$. For condition (3) note that
\[
\|e\|_E=\|B^{-1}e\|=\sup_{p\in \semis}p(B^{-1}e)=\sup_{p_E\in \semis_E}p_E(e),
\]
so that, by Remark \ref{rem:pnorming}, (3){} in Assumption \ref{asp:bicontspace} is fulfilled. The proof of (b) is complete.

\medskip\noindent (c) For $e\in E$ we have $\|T_E(t)\|_E=\|B^{-1}BT(t)B^{-1}e\|=\|T(t)B^{-1}e\|\leq \|T(t)\|\cdot \|e\|_E$. Showing that $T_E(t)\in\LLL(E)$. That $(T_E(t))_{t\geq 0}$ satisfies the semigroup property is evident. For $e\in E$ and $p_E\in \semis_E$ we have
\[
p_E(T_E(t)e-e)=p(B^{-1}BT(t)B^{-1}e-B^{-1}e)=p(T(t)B^{-1}e-B^{-1}e)\to 0\quad \text{for $t\to 0$},
\]
showing the $\tau_E$-strong continuity of $(T_E(t))_{t\geq 0}$. If $(e_n)$ is a $\|\cdot\|_E$-bounded, $\tau_E$-null sequence, then $(B^{-1}e_n)$ is a $\|\cdot\|$-bounded $\tau$-null sequence, so that by assumption $T_E(t)e_n=T(t)B^{-1}e_n\to 0$ uniformly for $t$ in compact intervals.
If $A$ is the generator of $(T(t))_{t\geq 0}$, then by means of \eqref{eq:bicontlaplace} we can conclude that $B^{-1}AB$ is the generator of $(T_E(t))_{t\geq 0}$. 
\end{proof}

\begin{definition}\label{def:bicontext}
Let $(T(t))_{t\geq 0}$ be a bi-continuous semigroup in $X_0$ with generator $A$.
\begin{abc}
\item For $B=A^{-1}:X_0\to X_1$ and $E=X_1$ in Proposition \ref{prop:Edef} define $\semis_1:=\semis_E$, $\tau_{1}:=\tau_E$, $(T_1(t))_{t\geq 0}=(T_E(t))_{t\geq 0}$. 
\item For $B=A_{-1}:X_0\to X_{-1}$ and $E=X_{-1}$ in Proposition \ref{prop:Edef} define $\semis_{-1}:=\semis_E$, $\tau_{-1}:=\tau_E$, $(T_{-1}(t))_{t\geq 0}=(T_E(t))_{t\geq 0}$. 
\end{abc}
\end{definition}
We obtain immediately.

\begin{proposition}\label{prop:T-1bicont}
By construction $(T_1(t))_{t\geq 0}$ and $(T_{-1}(t))_{t\geq 0}$ are bi-continuous semigroups on $X_1$ and $X_{-1}$, with generators $A_1=A\rest\dom(A)$ and $A_{-1}$, respectively.
\end{proposition}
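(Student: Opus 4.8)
The plan is to reduce everything to Proposition \ref{prop:Edef} together with the identifications worked out in Section \ref{sec:invert}. Recall that by Definition \ref{def:bicontext}, the semigroup $(T_{-1}(t))_{t\geq 0}$ is defined as $(T_E(t))_{t\geq 0}$ with the choice $B=A_{-1}\colon X_0\to X_{-1}$ and $E=X_{-1}$; similarly $(T_1(t))_{t\geq 0}$ arises from $B=A^{-1}\colon X_0\to X_1$ and $E=X_1$. First I would invoke Proposition \ref{prop:Edef}(b): since $A_{-1}$ (resp.\ $A^{-1}$) is a bijective linear map from $X_0$ onto the Banach space $X_{-1}$ (resp.\ $X_1$), the triple $(X_{-1},\|\cdot\|_{-1},\tau_{-1})$ (resp.\ $(X_1,\|\cdot\|_{X_1},\tau_1)$) satisfies Assumption \ref{asp:bicontspace}. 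Then Proposition \ref{prop:Edef}(c) directly gives that $(T_E(t))_{t\geq 0}$ is a bi-continuous semigroup on the respective space, which is exactly the assertion that $(T_{-1}(t))_{t\geq 0}$ and $(T_1(t))_{t\geq 0}$ are bi-continuous.

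Next I would identify the generators. Proposition \ref{prop:Edef}(c) also tells us that the generator of $(T_E(t))_{t\geq 0}$ is $BAB^{-1}$. For the choice $B=A_{-1}$ this reads $A_{-1}\, A\, A_{-1}^{-1}$; since $A_{-1}$ extends $A$ and, by Proposition \ref{prop:intertwine+1}, $A_{-1}$ intertwines $A_{-1}$ and $A_0=A$ (equivalently $A_{-1}^{-1}A_{-1}A_{-1}^{-1} = A_{-1}^{-1}\cdot(\text{shift})$), one gets $A_{-1}AA_{-1}^{-1} = A_{-1}$ on the appropriate domain; thus the generator is $A_{-1}$. For the choice $B=A^{-1}$ one gets the generator $A^{-1}AA = A$ restricted to the right domain, which by Remark \ref{rem:1}(4) and the very definition of $A_1 = A\rest_{X_1}$ as the part of $A$ in $X_1=\dom(A)$ equals $A_1 = A\rest_{\dom(A)}$ (viewed as an operator on $X_1$, with $\dom(A_1)=\dom(A^2)$). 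I would write out this intertwining computation carefully on $\dom(A^2)$ and then use bi-density (Theorem on properties of bi-continuous semigroups, part (e), plus the standing assumption) to conclude the resolvents agree, hence the generators agree.

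The main obstacle — really the only place where care is needed — is making the generator identification rigorous rather than merely formal, because $A_{-1}AA_{-1}^{-1}$ is a product of unbounded operators and one must check domains. I expect to handle this exactly as in the proof of Proposition \ref{prop:extT}: pass through resolvents. Concretely, for large $\lambda$ one has $R(\lambda, BAB^{-1}) = B\,R(\lambda,A)\,B^{-1}$ as bounded operators, and on the other hand $R(\lambda,A_{-1}) = A_{-1}R(\lambda,A)A_{-1}^{-1}$ (resp.\ $R(\lambda,A_1)=A^{-1}R(\lambda,A)A$), both of which follow from the intertwining isomorphisms of Proposition \ref{prop:intertwine+1}; comparing the two expressions gives equality of resolvents and hence of the operators. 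Since all of the ingredients — Proposition \ref{prop:Edef}, Proposition \ref{prop:intertwine+1}, the definitions in Definition \ref{def:bicontext}, and Remark \ref{rem:1} — are already in place, the proof is short: it is essentially a bookkeeping exercise, and the one-line statement ``By construction'' in the proposition is justified by explicitly unwinding the constructions and applying Proposition \ref{prop:Edef}(b),(c) with the two specified choices of $B$.
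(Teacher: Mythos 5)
Your proposal is correct and follows exactly the route the paper intends: the paper offers no written proof beyond ``We obtain immediately,'' meaning precisely the unwinding of Definition \ref{def:bicontext} via Proposition \ref{prop:Edef}(b),(c) with $B=A^{-1}$ and $B=A_{-1}$ that you carry out, including the domain check that $A_{-1}AA_{-1}^{-1}=A_{-1}$ on $X_0$ and $A^{-1}AA=A\rest_{\dom(A^2)}=A_1$. Your extra care with the generator identification (working on $\dom(A^2)$ and, if desired, passing through resolvents) only makes explicit what the paper leaves implicit.
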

Iterating the procedure in Definition \ref{def:bicontext} we obtain the (extrapolated) semigroups $(T_n(t))_{t\geq 0}$ for the full scale $n\in \ZZ$.

\begin{definition}\label{def:bicontext2}
Let $(T(t))_{t\geq 0}$ be a bi-continuous semigroup on $X_0$ with generator $A$. Recursively we make the following definitions: 
\begin{abc}
\item For $B=A_n^{-1}:X_n\to X_{n+1}$ and $E=X_{n+1}$ in Proposition \ref{prop:Edef} define $\semis_{n+1}:=\semis_E$, $\tau_{n+1}:=\tau_E$, $(T_{n+1}(t))_{t\geq 0}=(T_n(t))_{t\geq 0}$. 
\item For $B=A_{-n-1}:X_{-n}\to X_{-n-1}$ and $E=X_{-n-1}$ in Proposition \ref{prop:Edef} define $\semis_{-n-1}:=\semis_E$, $\tau_{-n-1}:=\tau_E$, $(T_{-n-1}(t))_{t\geq 0}=(T_{-n}(t))_{t\geq 0}$. 
\end{abc}
\end{definition}

\begin{proposition}
For each $n\in \ZZ$ the semigroup $(T_n(t))_{t\geq 0}$ is bi-continuous on $X_n$ with generator $A_n:X_{n+1}\to X_n$. Its space of norm strong continuity is $\xX_n$.
\end{proposition}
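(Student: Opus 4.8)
The plan is to combine three ingredients that are all available by the time we reach this statement: the abstract scale of extrapolation spaces and operators from Section \ref{sec:invert}, the recursive construction in Definition \ref{def:bicontext2}, and the iterated form of Proposition \ref{prop:Edef} and Proposition \ref{prop:T-1bicont}. The argument is by induction on $|n|$, and it splits naturally into the two directions $n\geq 0$ and $n\leq 0$, treated symmetrically. The base case $n=0$ is Theorem \ref{thm:bistrong} together with Lemma \ref{lem:spacecont}: $(T(t))_{t\geq 0}$ is bi-continuous on $(X_0,\|\cdot\|,\tau)$ with generator $A$, and its space of norm strong continuity is $\xX_0=\overline{\dom(A)}$.

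For the inductive step, suppose the claim holds for some level $n$, i.e., $(T_n(t))_{t\geq 0}$ is bi-continuous on $(X_n,\|\cdot\|_{X_n},\tau_n)$ with generator $A_n$. Consider the level $n+1$: by Proposition \ref{prop:intertwine+1} the operator $A_n:X_{n+1}\to X_n$ is an isometric isomorphism, so $B:=A_n^{-1}:X_n\to X_{n+1}$ is precisely the bijective linear map to which Proposition \ref{prop:Edef} applies, with $E=X_{n+1}$, $\tau_E=\tau_{n+1}$, $\semis_E=\semis_{n+1}$. Part (b) of that proposition then gives that $(X_{n+1},\|\cdot\|_{X_{n+1}},\tau_{n+1})$ satisfies Assumption \ref{asp:bicontspace}, and part (c) gives that $(T_{n+1}(t))_{t\geq 0}=(BT_n(t)B^{-1})_{t\geq 0}$ is a bi-continuous semigroup on $X_{n+1}$ whose generator is $BA_nB^{-1}=A_n^{-1}A_nA_n=A_{n+1}$ (using Remark \ref{rem:1}(4) / Proposition \ref{prop:intertwine+1} to identify the conjugated operator with $A_{n+1}:X_{n+2}\to X_{n+1}$, or rather $A_{n+1}$ restricted appropriately). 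The downward step $n\to n-1$ is identical, this time with $B=A_{-n-1}$ (or, uniformly, $B=A_n:X_{n+1}\to X_n$ read in the other direction) as in part (b) of Definition \ref{def:bicontext2}; note that for negative indices $X_n$ is genuinely a completion, but this causes no trouble because Proposition \ref{prop:Edef} only needs $B$ to be a linear bijection, which $A_{-1},A_{-2},\dots$ are by construction.

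It remains to identify the space of norm strong continuity at level $n$. By Lemma \ref{lem:spacecont}, applied now to the semigroup $(T_n(t))_{t\geq 0}$ on $X_n$ with generator $A_n$ (Assumption \ref{asp:sgrps} holds for $(T_n(t))$ because it holds for $(T(t))$ and is preserved under the similarity $B$; in particular $T_n(t)$ commutes with $A_n^{-1}$ by Proposition \ref{prop:extT}), the space of norm strong continuity of $(T_n(t))$ equals $\overline{\dom(A_n)}$, the closure taken in $\|\cdot\|_{X_n}$. So the task reduces to showing $\overline{\dom(A_n)}^{X_n}=\xX_n$. For $n\geq 0$ this is essentially the standing assumption that $\aA$ is densely defined together with Remark \ref{rem:1}; one unwinds $\dom(A_n)=\dom(A^{n+1})$, and $\xX_n=\dom(\aA^n)$, and uses that the isometric isomorphism $\aA^n:\xX_n\to\xX_0$ carries $\dom(\aA^{n+1})$ onto $\dom(\aA)$, whose closure in $\xX_0$ is all of $\xX_0$. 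For $n<0$ one instead uses Proposition \ref{prop:altercompl} (and its iteration) together with the fact that $\xX_n$, being a closed subspace of $X_n$ on which the two norms coincide (Proposition following Definition \ref{def:extra}, and Remark \ref{rem:iter-1}), is exactly the $\|\cdot\|_{X_n}$-closure of $X_0$, which equals the closure of $\dom(A_n)$ since $\xX_{n+1}\subseteq\dom(A_n)$ is already dense in $\xX_n$.

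The main obstacle, and the only place that requires genuine care rather than bookkeeping, is the last point: pinning down that the norm-closure of $\dom(A_n)$ inside $X_n$ is exactly $\xX_n$, rather than something larger or smaller, in the negative range where $X_n$ is obtained by completion and the inclusions $\xX_n\subseteq X_n$ are strict. One has to keep straight that the relevant closure is in the \emph{new} norm $\|\cdot\|_{X_n}=\|A_n^{\,?}\cdot\|$ of the extrapolated space, not in the original norm, and invoke the already-proved facts that (i) $\xX_n$ is $\|\cdot\|_{X_n}$-closed in $X_n$ with the coinciding norm, and (ii) each $\xX_{m}$ for $m>n$ is $\|\cdot\|_{X_n}$-dense in $\xX_n$ (the iterated Proposition \ref{prop:altercompl}). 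Combining (i), (ii) with the sandwich $\xX_{n+1}\subseteq \dom(A_n)=X_{n+1}\subseteq X_n$ closes the argument.
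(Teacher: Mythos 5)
Your proposal is correct and follows essentially the same route as the paper, which simply states that the first assertion follows from Proposition \ref{prop:T-1bicont} by recursion and that the identification of the space of norm strong continuity follows from Lemma \ref{lem:spacecont} for $n=0$ and then recursively for general $n\in\ZZ$. You have merely spelled out the recursion (via Proposition \ref{prop:Edef} and the norm/closedness facts from Section \ref{sec:invert}) that the paper leaves implicit, and your care with the closure of $\dom(A_n)$ in the extrapolated norm is exactly the bookkeeping the paper's ``argue recursively'' presupposes.
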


\begin{proof}
The first statement follows directly from Proposition \ref{prop:T-1bicont} by recursion. For $n=0$ the second assertion is the content of Lemma \ref{lem:spacecont}, for general $n\in \ZZ$ one can argue recursively.
\end{proof}

The following diagram summarizes the situation:
\begin{align*}
\xymatrix{ 
\xX_{-2}\ar[rr]^{\tT_{-2}(t)}& & \xX_{-2}\ar@/^2pc/[dd]^{\aA^{-1}_{-2}}\\
X_{-1}\ar[u]\ar[rr]^{T_{-1}(t)}& & X_{-1}\ar[u]\ar@/^2pc/[dd]^{A^{-1}_{-1}}\\
\xX_{-1}\ar[u]\ar[rr]^{\tT_{-1}(t)(t)}\ar@/^2pc/[uu]^{\aA_{-2}} & & \xX_{-1}\ar[u]\ar@/^2pc/[dd]^{\aA^{-1}_{-1}}\\
X_0\ar[u]\ar[rr]^{T(t)(t)}\ar@/^2pc/[uu]^{A_{-1}} & & X_0\ar[u]\ar@/^2pc/[dd]^{A^{-1}}\\
\xX_0\ar[u]\ar[rr]^{\tT(t)(t)}\ar@/^2pc/[uu]^{\aA_{-1}} & & \xX_0\ar[u]\ar@/^2pc/[dd]^{\aA^{-1}}\\
X_1\ar[rr]^{T_1(t)(t)}\ar@/^2pc/[uu]^{A}\ar[u] & & X_1\ar[u]\ar[u]\\
\xX_1\ar[u]\ar[rr]^{\tT_1(t)(t)}\ar@/^2pc/[uu]^{\aA} & & \xX_1\ar[u]\\
}
\end{align*}
The spaces $\xX_{n+1}$ are bi-dense in $X_n$ for the topology $\tau_{X_n}$ and dense in $\xX_n$ for the norm $\|\cdot\|_{X_n}$. The semigroups $(T_n(t))_{t\geq 0}$ are bi-continuous on $X_n$, while $(\tT_n(t))_{t\geq 0}$ are $C_0$-semigroups (strongly continuous for the norm) on $\xX_n$.

\subsection{H\"older spaces of bi-continuous semigroups}
Suppose $A$ generates the bi-continuous semigroup $(T(t))_{t\geq0}$ of negative growth bound on $X_0$. Recall from Theorem \ref{thm:bistrong} that the restricted operators $\tT(t):=T(t)|_{\xX_0}$ form a $C_0$-semigroup $(\tT(t))_{t\geq0}$ on $\xX_0$. Also recall from Proposition \ref{prop:FavCont} that for $\alpha\in(0,1]$
\[
\Fav_{\alpha}(A)=\Bigl\{x\in\xX_0:\ \sup_{t>0}{\frac{\|\tT(t)x-x\|}{t^{\alpha}}}<\infty\Bigr\}=\Bigl\{x\in X_0:\ \sup_{t>0}{\frac{\|T(t)x-x\|}{t^{\alpha}}}<\infty\Bigr\}\]
with the norm
\[
\|x\|_{\alpha}=\sup_{t>0}{\frac{\|\tT(t)x-x\|}{t^{\alpha}}},
\]
and for $\alpha\in(0,1)$:
\[
\XHol_{\alpha}(A):=\Bigl\{x\in\xX_0:\ \lim_{t\to0}{\frac{\|\tT(t)x-x\|}{t^{\alpha}}}=0\Bigr\}=\Bigl\{x\in X_0:\ \lim_{t\to0}{\frac{\|T(t)x-x\|}{t^{\alpha}}}=0\Bigr\}.
\]
 We have the following (continuous) inclusions:
\[
\xX_1\hookrightarrow X_1\to \XHol_{\alpha}(A)\hookrightarrow\Fav_{\alpha}(A)\to\xX_0\hookrightarrow X_0;
\]
all these spaces are invariant under $(T(t))_{t\geq0}$.We now extend this diagram by a space which lies between $\xX_{\alpha}$ and $F_{\alpha}$. 
\begin{definition}\label{def:biHoelder}
Let $(T(t))_{t\geq0}$ be a bi-continuous semigroup of negative growth bound on a Banach space $X_0$ with respect to a locally convex topology $\tau$ that is generated by a family $\semis$ of seminorms satisfying \eqref{eq:semisnorm}. For $\alpha\in(0,1)$ we define the space 
\begin{align}
&X_{\alpha}:=\Bigl\{x\in X_0:\ \tlim_{t\to0}{\frac{T(t)x-x}{t^{\alpha}}}=0\ \text{and}\ \sup_{t>0}{\frac{\|T(t)x-x\|}{t^{\alpha}}}<\infty\Bigr\},
\end{align}
and endow it with the norm $\|\cdot\|_{F_{\alpha}}$,
We further equip $F_{\alpha}$ and $X_\alpha$ with the locally convex topology $\tau_{F_{\alpha}}$ generated by the family of seminorms $\semis_{F_{\alpha}}:=\{p_{F_{\alpha}}\}$, where $p_{F\alpha}$ is given by
\begin{align}
p_{F_{\alpha}}(x):=\sup_{t>0}{\frac{p(T(t)x-x)}{t^{\alpha}}}.
\end{align}
\end{definition}

It is easy to see that $X_\alpha$ is a Banach space, i.e., as closed subspace of $F_\alpha$.
By construction we have that $\XHol_{\alpha}(A)\subseteq X_{\alpha}\subseteq\Fav_{\alpha}(A)$, which was our first goal. 
Next we discuss some properties of this space.
\begin{lemma}
\begin{abc}
\item Let $(x_n)$ be a $\|\cdot\|_{F_\alpha}$-norm bounded sequence in $F_\alpha$ with $x_n\to x\in X_0$ in the topology $\tau$. Then $x\in F_\alpha$.

\item The triple $(F_{\alpha},\|\cdot\|_{F_\alpha},\tau_{F_\alpha})$ satisfies the conditions in Assumption \ref{asp:bicontspace}.
\item
$X_{\alpha}$ is bi-closed in $F_{\alpha}$, i.e., every $\|\cdot\|_{F_\alpha}$-bounded an $\tau_{F_\alpha}$-convergent sequence in $X_{\alpha}$ has its limit in $X_{\alpha}$.
\end{abc}
\end{lemma}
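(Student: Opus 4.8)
The plan is to prove the three parts in order, since (b) uses (a) and (c) uses both.

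\textbf{Part (a).} Let $(x_n)$ be $\|\cdot\|_{F_\alpha}$-bounded with $x_n\xrightarrow{\tau}x$ in $X_0$; say $\|x_n\|_{F_\alpha}\le C$ for all $n$. I must show $x\in F_\alpha$, i.e., $\sup_{t>0}\|T(t)x-x\|/t^\alpha<\infty$. Fix $t>0$. By local bi-equicontinuity of $(T(t))_{t\geq 0}$ (Definition \ref{def:bicontsemi}(4)) together with the $\tau$-continuity of the identity, $T(t)x_n-x_n\xrightarrow{\tau}T(t)x-x$; more carefully, since $(x_n)$ is $\|\cdot\|$-bounded (because $\|x_n\|\le\|x_n\|_{F_\alpha}\cdot$const, using $F_\alpha\hookrightarrow X_0$ continuously) and $T(t)$ is individually $\tau$-continuous on norm-bounded sets, we get $T(t)x_n\xrightarrow{\tau}T(t)x$. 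Now use that $(X_0,\tau)'$ is norming: for any $\varphi\in(X_0,\tau)'$ with $\|\varphi\|\le 1$, $|\varphi(T(t)x_n-x_n)|\le\|T(t)x_n-x_n\|\le C t^\alpha$, and passing to the limit $n\to\infty$ (using $\tau$-continuity of $\varphi$) gives $|\varphi(T(t)x-x)|\le C t^\alpha$. Taking the supremum over such $\varphi$ via \eqref{eq:norm} yields $\|T(t)x-x\|\le C t^\alpha$, hence $x\in F_\alpha$. This is the main technical point; the rest is bookkeeping.

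\textbf{Part (b).} I must check the three conditions of Assumption \ref{asp:bicontspace} for $(F_\alpha,\|\cdot\|_{F_\alpha},\tau_{F_\alpha})$. For (1), $\tau_{F_\alpha}$ is generated by the seminorms $p_{F_\alpha}(x)=\sup_{t>0}p(T(t)x-x)/t^\alpha\le \sup_{t>0}\|T(t)x-x\|/t^\alpha=\|x\|_{F_\alpha}$ (using $p\le\|\cdot\|$), so $\tau_{F_\alpha}$ is coarser than the norm. For (3), the norming property, compute $\sup_{p\in\semis}p_{F_\alpha}(x)=\sup_{p}\sup_{t>0}p(T(t)x-x)/t^\alpha=\sup_{t>0}\bigl(\sup_p p(T(t)x-x)\bigr)/t^\alpha=\sup_{t>0}\|T(t)x-x\|/t^\alpha=\|x\|_{F_\alpha}$, interchanging the suprema and invoking \eqref{eq:semisnorm}; then Remark \ref{rem:pnorming} gives \eqref{eq:norm} for $F_\alpha$. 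The one genuinely nontrivial condition is (2): sequential $\tau_{F_\alpha}$-completeness of the closed unit ball of $F_\alpha$. Given a $\|\cdot\|_{F_\alpha}$-bounded $\tau_{F_\alpha}$-Cauchy sequence $(x_n)$, note that $\tau_{F_\alpha}$-Cauchy implies $\tau$-Cauchy in $X_0$ (take $t\to 0$ heuristically, or rather: $p(x_n-x_m)=\lim\inf$-type estimate — in fact $p(x_n-x_m)\le$ is controlled since $\lim_{t\to 0}(T(t)x-x)/t^\alpha$ behaviour is irrelevant, one uses instead that $p_{F_\alpha}(x_n-x_m)\to 0$ forces, for $t$ fixed, $p(T(t)(x_n-x_m)-(x_n-x_m))\to 0$, and combined with a fixed reference this should give $\tau$-Cauchyness); since $(x_n)$ is also $\|\cdot\|$-bounded, condition (2) for $(X_0,\|\cdot\|,\tau)$ provides a $\tau$-limit $x\in X_0$, and then part (a) gives $x\in F_\alpha$ with $\|x\|_{F_\alpha}\le\liminf\|x_n\|_{F_\alpha}$ (by the same norming argument); finally $x_n\xrightarrow{\tau_{F_\alpha}}x$ follows because $p_{F_\alpha}(x_n-x)$ can be estimated using the Cauchy property plus lower semicontinuity of $p_{F_\alpha}$ under $\tau$-limits (which is again part (a)'s estimate applied to the tail differences). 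The interchange-of-suprema step in (3) and the reduction of $\tau_{F_\alpha}$-Cauchyness to $\tau$-Cauchyness in (2) are where care is needed; I expect (2) to be the main obstacle.

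\textbf{Part (c).} $X_\alpha$ is the set of $x\in F_\alpha$ with $\tlim_{t\to 0}(T(t)x-x)/t^\alpha=0$ in $\tau$, equivalently $p_{F_\alpha}$-related: one checks $x\in X_\alpha$ iff for every $p\in\semis$ and $\veps>0$ there is $\delta>0$ with $p(T(t)x-x)\le\veps t^\alpha$ for $t\in(0,\delta)$. Let $(x_n)\subseteq X_\alpha$ be $\|\cdot\|_{F_\alpha}$-bounded, say by $C$, and $\tau_{F_\alpha}$-convergent to some $x$; by (b) we may take $x\in F_\alpha$. To see $x\in X_\alpha$, fix $p\in\semis$ and $\veps>0$. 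Choose $n$ with $p_{F_\alpha}(x-x_n)<\veps/3$, so $p(T(t)(x-x_n)-(x-x_n))\le(\veps/3)t^\alpha$ for all $t>0$. Since $x_n\in X_\alpha$, choose $\delta>0$ with $p(T(t)x_n-x_n)\le(\veps/3)t^\alpha$ for $t\in(0,\delta)$. Then for $t\in(0,\delta)$, the triangle inequality gives $p(T(t)x-x)\le p(T(t)(x-x_n)-(x-x_n))+p(T(t)x_n-x_n)\le (2\veps/3)t^\alpha\le\veps t^\alpha$, so $\tlim_{t\to 0}(T(t)x-x)/t^\alpha=0$, i.e., $x\in X_\alpha$. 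This is a routine $\veps/3$-argument once (a) and (b) are in hand, so no real obstacle here.
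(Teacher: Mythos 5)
Your proposal is correct and follows essentially the same route as the paper: lower semicontinuity of $\|\cdot\|_{F_{\alpha}}$ under $\tau$-limits of norm-bounded sequences for (a) (you use norming functionals from $(X_0,\tau)'$, the paper uses the seminorms in $\semis$ --- equivalent by Remark \ref{rem:pnorming}), then sequential completeness of the unit ball of $X_0$ plus (a) plus the same tail-difference estimate for (b), and the same two-term splitting for (c). The only step you leave genuinely unfinished is the reduction of $\tau_{F_{\alpha}}$-Cauchyness to $\tau$-Cauchyness in (b); the paper itself only asserts that $\tau$ is weaker than $\tau_{F_{\alpha}}$, and the clean way to justify this on the $\|\cdot\|$-bounded sequence $(x_n)$ (say $\|x_n\|\leq C$) is to use the negative growth bound: $p(x_n-x_m)\leq p\bigl(T(t)(x_n-x_m)\bigr)+p\bigl(T(t)(x_n-x_m)-(x_n-x_m)\bigr)\leq 2CM\ee^{\omega t}+t^{\alpha}p_{F_{\alpha}}(x_n-x_m)$ with $\omega<0$, so choosing $t$ large first and then $n,m$ large gives the claim.
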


\begin{proof}
\medskip\noindent (a) The statement follows from the fact that the norm $\|\cdot\|_{F_\alpha}$ is lower semicontinuous for the topology $\tau$. In detail:
Let $M\geq 0$ be such that \[\frac{\|T(t)x_n-x_n\|}{t^{\alpha}}\leq \|x_n\|_{F_\alpha}\leq M\] for each $n\in\NN$ and $t>0$. Hence we can estimate as follows:
\begin{align*}
\sup_{t>0}\frac{\|T(t)x-x\|}{t^{\alpha}}&=\sup_{t>0}\sup_{p\in\semis}p\Bigl(\frac{T(t)x-x}{t^{\alpha}}\Bigr)=\sup_{t>0}\sup_{p\in\semis}\lim_{n\to\infty}p\Bigl(\frac{T(t)x_n-x_n}{t^{\alpha}}\Bigr)\\
&\leq \sup_{t>0}\sup_{p\in\semis}\limsup_{n\to\infty}\Bigl\|\frac{T(t)x_n-x_n}{t^{\alpha}}\Bigr\|\leq \sup_{t>0}\sup_{n\in\NN}\Bigl\|\frac{T(t)x_n-x_n}{t^{\alpha}}\Bigr\|\leq M.
\end{align*}

\medskip\noindent (b) We have for $p\in\semis$ and $x\in F_\alpha$ that
\[
p_{F_{\alpha}}(x)=\sup_{t>0}{\frac{p(T(t)x-x)}{t^{\alpha}}}\leq\sup_{t>0}{\frac{\|T(t)x-x\|}{t^{\alpha}}}=\|x\|_{F_{\alpha}}.
\]
This proves that $\tau_{F_{\alpha}}$ is weaker than the $\|\cdot\|_{F_{\alpha}}$-topology, but $\tau_{F_\alpha}$ is still Hausdorff by construction. For the second property of Assumption \ref{asp:bicontspace} let $(x_n)_{n\in\NN}$ be a $\tau_{F_{\alpha}}$-Cauchy sequence in $F_{\alpha}$ such that there exists $M>0$ with $\|x_n\|_{F_{\alpha}}\leq M$ for each $n\in\NN$. Since $\tau$ is weaker than $\tau_{F_\alpha}$, we conclude that $(x_n)$ is $\tau$-Cauchy sequence which is also bounded in $\|\cdot\|_{F_\alpha}$ hence in $\|\cdot\|$. By assumption there is $x\in X_0$ such that $x_n\to x$ in $\tau$. 
By part (a) we obtain $x\in F_\alpha$. It remains to prove that $x_n\to x$ in $\tau_{F_\alpha}$. Let $\veps>0$, and take $N\in\NN$ such that for each $n,m\in \NN$ with $n,m\geq N$ we have $p_{F_\alpha}(x_n-x_m)<\veps$. For $t>0$ 
\[
p\Bigl(\frac{T(t)(x_n-x)-(x_n-x)}{t^{\alpha}}\Bigr)=\lim_{m\to\infty}
p\Bigl(\frac{T(t)(x_n-x_m)-(x_n-x_m)}{t^{\alpha}}\Bigr)\leq p_{F_\alpha}(x_n-x_m)<\veps
\]
for each $n\geq N$. Taking supremum in $t>0$ we obtain $p_{F_\alpha}(x-x_n)\leq \veps$ for each $n\geq N$.

\medskip\noindent The norming property in \eqref{eq:norm} follows again simply by applying the argumentation from Remark \ref{rem:seminorm} and the fact that the family $\semis$ is already norming by assumption.

\medskip\noindent (c) Let $(x_n)_{n\in\mathbb{N}}$ be a $\|\cdot\|_{F_{\alpha}}$-bounded and $\tau_{F_{\alpha}}$ convergent sequence in $X_{\alpha}$ with limit $x\in X_0$. For $p\in \semis$ we then have 
\[
\sup_{t>0}{p\Bigl(\frac{T(t)(x_n-x)-(x_n-x)}{t^{\alpha}}\Bigr)}\to 0.
\]
Since $x_n\in X_{\alpha}$ for each $n\in\mathbb{N}$, we have
\[
\lim_{t\to0}{p\Bigl(\frac{T(t)x_n-x_n}{t^{\alpha}}\Bigr)}=0, \quad\text{and}\quad \sup_{t>0}{\Bigl\|\frac{T(t)x_n-x_n}{t^{\alpha}}\Bigr\|}<\infty.
\]
We now can conclude for a fixed $p\in \semis$
\begin{align*}
p\Bigl(\frac{T(t)x-x}{t^{\alpha}}\Bigr)&=p\Bigl(\frac{T(t)(x-x_n)-(x-x_n)+T(t)x_n-x_n}{t^{\alpha}}\Bigr)\\
&\leq p\Bigl(\frac{T(t)(x-x_n)-(x-x_n)}{t^{\alpha}}\Bigr)+p(\frac{T(t)x_n-x_n}{t^{\alpha}}\Bigr)\\
&\leq p_{F_{\alpha}}(x-x_n)+p\Bigl(\frac{T(t)x_n-x_n}{t^{\alpha}}\Bigr)<\frac{\varepsilon}2+\frac{\varepsilon}2=\varepsilon,
\end{align*}
where we first fix $n\in\NN$ such that $p_{F_{\alpha}}(x-x_n)<\frac{\varepsilon}{2}$, and then we take $\delta>0$ such that $0<t<\delta$ implies $p(\frac{T(t)x_n-x_n}{t^{\alpha}})<\frac{\varepsilon}{2}$.
\end{proof}

The next goal is to see that $(T(t))_{t\geq 0}$ can be restricted to $X_{\alpha}$ to obtain a bi-continuous semigroup with respect to the topology $\tau_{F_\alpha}$. 

\begin{remark}
For the proof of the next lemma we notice that we have an equivalent formulation of the fact that $$\displaystyle{{\tlim}_{t\to0}\frac{T(s)x-x}{s^{\alpha}}=0}$$
 by means of sequences in $\RR$. In fact, to prove this convergence to $0$ we only have to check that $$\frac{p(T(s_n)x-x)}{s_n^{\alpha}}\to0$$ for $n\to\infty$ for every null-sequence $(s_n)_{n\in\NN}$ in $[0,\infty)$ and for each $p\in \semis$. 
\end{remark}

\begin{lemma}
If $(T(t))_{t\geq0}$ is a bi-continuous semigroup, then $X_{\alpha}$ is invariant under the semigroup.
\end{lemma}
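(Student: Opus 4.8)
\emph{Plan.} Fix $x\in X_{\alpha}$ and $r\geq 0$; the goal is to verify the two defining conditions of $X_{\alpha}$ for $T(r)x$. The norm-bound condition $\sup_{t>0}\|T(t)T(r)x-T(r)x\|/t^{\alpha}<\infty$ is free of charge: since $X_{\alpha}\subseteq F_{\alpha}=\Fav_\alpha(A)$ and $F_{\alpha}$ is invariant under the semigroup by Proposition \ref{prop:FavCont}, we already have $T(r)x\in F_{\alpha}$, which is precisely this bound. So the only real content is the $\tau$-limit condition $\tlim_{t\to0}(T(t)T(r)x-T(r)x)/t^{\alpha}=0$.

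To handle it I would invoke the sequential reformulation from the Remark preceding this Lemma: it suffices to show, for every null sequence $(s_n)$ in $[0,\infty)$ and every $p\in\semis$, that $p\bigl((T(s_n)T(r)x-T(r)x)/s_n^{\alpha}\bigr)\to 0$. Set $y_n:=(T(s_n)x-x)/s_n^{\alpha}$ (with $y_n:=0$ when $s_n=0$). Using $T(s_n)T(r)=T(s_n+r)=T(r)T(s_n)$ one gets $(T(s_n)T(r)x-T(r)x)/s_n^{\alpha}=T(r)y_n$, so the claim reduces to $T(r)y_n\to 0$ in $\tau$.

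Now comes the key point. Because $x\in X_{\alpha}$, the sequence $(y_n)$ is norm-bounded, with $\|y_n\|\leq\sup_{t>0}\|T(t)x-x\|/t^{\alpha}=\|x\|_{F_{\alpha}}$, and it is $\tau$-convergent to $0$, this latter fact being exactly $\tlim_{t\to0}(T(t)x-x)/t^{\alpha}=0$ evaluated along $(s_n)$. Applying local bi-equicontinuity of the bi-continuous semigroup (Definition \ref{def:bicontsemi}(4)) with $t_0=r$, we conclude that $T(s)y_n\to 0$ in $\tau$ uniformly for $s\in[0,r]$, in particular $T(r)y_n\to 0$ in $\tau$, hence $p(T(r)y_n)\to 0$ for each $p\in\semis$. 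This gives the desired $\tau$-limit, and together with the norm bound above it shows $T(r)x\in X_{\alpha}$.

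I do not anticipate a genuine obstacle: the argument is essentially the one-line identity $(T(t)T(r)x-T(r)x)/t^{\alpha}=T(r)\bigl((T(t)x-x)/t^{\alpha}\bigr)$ combined with local bi-equicontinuity. The only step requiring a little care is the passage from the continuous-parameter $\tau$-limit in the definition of $X_{\alpha}$ to a statement along sequences, so that the (sequential) local bi-equicontinuity hypothesis applies; this is exactly what the Remark before the Lemma provides.
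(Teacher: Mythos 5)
Your proposal is correct and follows essentially the same route as the paper: both reduce the claim to showing that $T(r)y_n\to 0$ in $\tau$ for $y_n=(T(s_n)x-x)/s_n^{\alpha}$ along null sequences $(s_n)$, using that $(y_n)$ is norm-bounded and $\tau$-null because $x\in X_{\alpha}$, and then invoking local bi-equicontinuity. You are in fact slightly more explicit than the paper in checking the norm-boundedness condition for $T(r)x$ via the invariance of $F_{\alpha}$, which the paper leaves implicit.
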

\noindent 
 
\begin{proof}
 Let $x\in X_{\alpha}$. Then we have that $y_n:=\frac{T(s_n)x-x}{s_n^{\alpha}}$ converges to $0$ with respect to $\tau$ if $(s_n)_{n\in\mathbb{N}}$ is any null-sequence and $n\to\infty$. Moreover, this sequence $(y_n)_{n\in\mathbb{N}}$ is $\|\cdot\|$-bounded by the assumption that $x\in X_{\alpha}$. Whence we conclude
 \[
 \tlim_{n\to\infty}T(t)y_n=\tlim_{n\to\infty}\frac{T(s_n)T(t)x-T(t)x}{s_n^{\alpha}}=0,
 \]
 so that $T(t)x\in X_{\alpha}$.
\end{proof}

To prove that $(T(t))_{t\geq0}$ is bi-continuous on $X_{\alpha}$ we have to show that the semigroup satisfies all conditions from Definition \ref{def:bicontsemi}. Notice that the local boundedness and the semigroup property are trivial.

\begin{proposition}
\label{prop:strongcontalpha}
If $(T(t))_{t\geq0}$ is a bi-continuous semigroup on $X_0$ and $\alpha\in(0,1)$, then $(T(t))_{t\geq0}$ is strongly $\tau_{F_{\alpha}}$-continuous on $X_{\alpha}$.
\end{proposition}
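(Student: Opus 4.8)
The plan is to verify condition (2) of Definition~\ref{def:bicontsemi} for $(T(t))_{t\geq 0}$ acting on $(X_\alpha,\tau_{F_\alpha})$. By the semigroup law, continuity of $t\mapsto T(t)x$ at an arbitrary $t_0>0$ reduces to continuity at $t_0=0$ (the reduction is indicated at the end), so the substantive point is
\[
p_{F_\alpha}(T(t)x-x)\longrightarrow 0\quad(t\downarrow 0)\qquad\text{for every }x\in X_\alpha,\ p\in\semis.
\]
The key starting observation is that $T(t)T(s)=T(t+s)=T(s)T(t)$, which gives, for $t,s>0$,
\[
\frac{T(s)\bigl(T(t)x-x\bigr)-\bigl(T(t)x-x\bigr)}{s^\alpha}=T(t)u_s-u_s,\qquad u_s:=\frac{T(s)x-x}{s^\alpha},
\]
so that $p_{F_\alpha}(T(t)x-x)=\sup_{s>0}p\bigl(T(t)u_s-u_s\bigr)$. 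The proof then hinges on three facts about the family $(u_s)_{s>0}$: one has $\|u_s\|\le\|x\|_{F_\alpha}$ for all $s$; $u_s\stackrel{\tau}{\to}0$ as $s\downarrow 0$, which is precisely the defining property $x\in X_\alpha$; and $\|u_s\|\to 0$ as $s\to\infty$, which follows from the negative growth bound $\|T(s)\|\le M\ee^{-cs}$ with $c>0$ (so $\|u_s\|\le (M+1)\|x\|/s^\alpha$).

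I would then argue by contradiction. If $\sup_{s>0}p(T(t)u_s-u_s)$ did not tend to $0$, there would exist $\veps_0>0$, a sequence $t_n\downarrow 0$ and points $s_n>0$ with $p\bigl(T(t_n)u_{s_n}-u_{s_n}\bigr)\ge\veps_0$ for all $n$. After passing to a subsequence, exactly one of three cases occurs. If $s_n\to 0$, then $(u_{s_n})$ is norm-bounded and $\tau$-null, so by the local bi-equicontinuity in Definition~\ref{def:bicontsemi}(4) we have $T(t_n)u_{s_n}\stackrel{\tau}{\to}0$ (uniformly for $t_n\in[0,1]$), and together with $u_{s_n}\stackrel{\tau}{\to}0$ this contradicts $p(T(t_n)u_{s_n}-u_{s_n})\ge\veps_0$. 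If $s_n\to\infty$, then already $\|T(t_n)u_{s_n}-u_{s_n}\|\le(M+1)\|u_{s_n}\|\to 0$, a contradiction since $p\le\|\cdot\|$. Finally, if along a subsequence $s_n\in[a,b]\subset(0,\infty)$, I use the alternative form $s_n^{\alpha}\bigl(T(t_n)u_{s_n}-u_{s_n}\bigr)=\bigl(T(s_n)-I\bigr)\bigl(T(t_n)x-x\bigr)$: since $T(t_n)x-x\stackrel{\tau}{\to}0$ by the $\tau$-strong continuity of $(T(t))_{t\geq 0}$ and $\|T(t_n)x-x\|\le(M+1)\|x\|$, local bi-equicontinuity gives $T(s_n)(T(t_n)x-x)\stackrel{\tau}{\to}0$ uniformly for $s_n\in[0,b]$, hence $p\bigl((T(s_n)-I)(T(t_n)x-x)\bigr)\to 0$, and dividing by $s_n^{\alpha}\ge a^{\alpha}>0$ contradicts the choice of $s_n$ once more. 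This settles the case $t_0=0$.

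For $t_0>0$ one writes $T(t_0+h)x-T(t_0)x=T(t_0)z_h$ and $T(t_0-h)x-T(t_0)x=-T(t_0-h)z_h$ with $z_h:=T(h)x-x$, notes that $z_h\to 0$ in $\tau_{F_\alpha}$ by the case just proved while $\|z_h\|_{F_\alpha}\le(M+1)\|x\|_{F_\alpha}$, and runs the same subsequence argument on the identity $p_{F_\alpha}(T(t_0')z_h)=\sup_{s>0}p\bigl(T(t_0')\tfrac{(T(s)-I)z_h}{s^{\alpha}}\bigr)$, for $t_0'$ ranging over a compact neighbourhood of $t_0$, using $\sup_{s>0}p\bigl(\tfrac{(T(s)-I)z_h}{s^{\alpha}}\bigr)=p_{F_\alpha}(z_h)\to 0$ together with local bi-equicontinuity over that compact set of times. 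The genuinely delicate point of the whole argument is the regime $s_n\to 0$: this is the only place where the two ``non-trivial'' hypotheses — that $t\mapsto T(t)x$ be little-H\"older in the $\tau$-sense (the very definition of $X_\alpha$) and that $(T(t))_{t\geq 0}$ be locally bi-equicontinuous — are both indispensable, whereas the large-$s$ and bounded-$s$ regimes use only the negative growth bound and the plain $\tau$-strong continuity of $(T(t))_{t\geq 0}$.
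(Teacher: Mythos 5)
Your proof is correct and rests on exactly the same core mechanism as the paper's: the identity $\frac{(T(s)-I)(T(t)x-x)}{s^{\alpha}}=T(t)u_s-u_s$ with $u_s=\frac{T(s)x-x}{s^{\alpha}}$, the fact that $x\in X_\alpha$ makes $(u_{s})$ norm-bounded and $\tau$-null as $s\downarrow 0$, and local bi-equicontinuity to kill the term $T(t)u_s$. In fact your version is more complete than the paper's own proof, which only treats the regime $s_n\to 0$ of the supremum defining $p_{F_\alpha}$ and only continuity at $t=0$; your trichotomy on $(s_n)$ (using the negative growth bound for $s_n\to\infty$ and plain $\tau$-strong continuity plus bi-equicontinuity for $s_n$ in a compact subset of $(0,\infty)$) and your reduction for $t_0>0$ fill in precisely the steps the paper glosses over.
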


\begin{proof}We have to show that $p_{F_{\alpha}}(T(t_n)x-x)\to 0$ for all $p\in \semis$ whenever $t_n\downto 0$. Let $s_n,t_n>0$ be with $s_n,t_n\to 0$. Then
\begin{align}
\notag\frac{p(T(s_n)T(t_n)x-T(s_n)x-T(t_n)x+x)}{s_n^{\alpha}}&\leq\frac{p(T(t_n)T(s_s)x-T(t_n)x)}{s_n^{\alpha}}+\frac{p(T(s_n)x-x)}{s_n^{\alpha}}\\
&\label{eq:last} =\frac{p(T(t_n)(T(s_n)x-x))}{s_n^{\alpha}}+\frac{p(T(s_n)x-x)}{s_n^{\alpha}}.
\end{align}
 The sequence $(y_n)$ given by $y_n:=\frac{T(s_n)x-x}{s_n^{\alpha}}$ is $\|\cdot\|$-bounded and $\tau$-convergent to $0$, because $x\in X_\alpha$. So that the last term in the previous equation \eqref{eq:last} converges to $0$. But since $\{T(t_n):n\in \NN\}$ is bi-equicontinuous, also the first term in \eqref{eq:last} converges to $0$. This proves strong continuity with respect to $\tau_{F_{\alpha}}$.
\end{proof}

To conclude with the result that $(T(t))_{t\geq0}$ is bi-continuous on $X_{\alpha}$ we have to show that this restriction is in particular locally bi-equicontinuous. 

\begin{proposition}Let $(T(t))_{t\geq0}$ be a bi-continuous semigroup on $X_0$. Then $(T(t))_{t\geq0}$ is locally bi-equicontinuous on $F_{\alpha}$.\end{proposition}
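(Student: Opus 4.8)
The plan is to unfold the definition of local bi-equicontinuity for the triple $(F_{\alpha},\|\cdot\|_{F_\alpha},\tau_{F_\alpha})$ and reduce it, via the semigroup law, to the local bi-equicontinuity of $(T(t))_{t\geq 0}$ on $(X_0,\|\cdot\|,\tau)$, which is available by Definition \ref{def:bicontsemi}(4). So let $(x_n)$ be a $\|\cdot\|_{F_\alpha}$-bounded sequence in $F_\alpha$, say $\|x_n\|_{F_\alpha}\leq M$, which is $\tau_{F_\alpha}$-null; fix $t_0\geq 0$, $p\in\semis$ and $\varepsilon>0$. One has to find $N$ with $p_{F_{\alpha}}(T(s)x_n)<\varepsilon$ for all $n\geq N$ and all $s\in[0,t_0]$. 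Using $T(t)T(s)x-T(s)x=T(s)(T(t)x-x)$, I would write
\[
p_{F_{\alpha}}(T(s)x_n)=\sup_{t>0}p\bigl(T(s)\,y_n^t\bigr),\qquad y_n^t:=\frac{T(t)x_n-x_n}{t^{\alpha}}.
\]

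The key observation is that the vectors $y_n^t$ satisfy $\|y_n^t\|\leq\|x_n\|_{F_\alpha}\leq M$ for all $n$ and all $t>0$, and $q(y_n^t)\leq q_{F_\alpha}(x_n)$ for every $q\in\semis$ and every $t>0$; since $x_n\to 0$ in $\tau_{F_\alpha}$, the latter forces $q_{F_\alpha}(x_n)\to 0$, hence $\sup_{t>0}q(y_n^t)\to 0$ as $n\to\infty$, for each $q\in\semis$. Consequently, \emph{any} sequence of the form $(y_{n_k}^{t_k})_k$ with $n_k\to\infty$ (and arbitrary $t_k>0$) is $\|\cdot\|$-bounded and $\tau$-convergent to $0$. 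I would then argue by contradiction: if the desired estimate fails, there are $p_0\in\semis$, $\varepsilon_0>0$, $t_0\geq 0$, indices $n_k\to\infty$ and times $s_k\in[0,t_0]$, $t_k>0$ with $p_0\bigl(T(s_k)y_{n_k}^{t_k}\bigr)\geq \varepsilon_0/2$ (the factor $1/2$ appearing because the offending value is a supremum over $t$). Setting $z_k:=y_{n_k}^{t_k}$, the previous remark shows $(z_k)$ is $\|\cdot\|$-bounded and $\tau$-null, so applying the local bi-equicontinuity of $(T(t))_{t\geq 0}$ on $X_0$ to this sequence with horizon $t_0$ gives $\sup_{s\in[0,t_0]}p_0(T(s)z_k)\to 0$, in particular $p_0(T(s_k)z_k)\to 0$, contradicting $p_0(T(s_k)z_k)\geq\varepsilon_0/2$. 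This yields local bi-equicontinuity on $F_{\alpha}$, and, together with the earlier propositions, completes the proof that $(T(t))_{t\geq 0}$ restricts to a bi-continuous semigroup on $X_{\alpha}$.

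The only mildly delicate point, and the place I would be most careful, is the bookkeeping that passes from the supremum over $t>0$ defining $p_{F_{\alpha}}$ to a single pair $(s_k,t_k)$, and the recognition that a family indexed \emph{simultaneously} by $n$ and $t$ still qualifies as a $\|\cdot\|$-bounded $\tau$-null sequence once one lets $n\to\infty$ — this is precisely what licenses invoking the hypothesis on $X_0$. Everything else is a routine contradiction/diagonal argument and does not require compactness or completeness.
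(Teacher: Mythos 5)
Your proof is correct and follows essentially the same route as the paper's: both arguments proceed by contradiction, use the commutation $T(s)(T(t)x-x)=T(t)(T(s)x-x)$ to reduce $p_{F_\alpha}(T(s)x_n)$ to seminorms of the difference quotients $y_n^t=(T(t)x_n-x_n)/t^{\alpha}$, observe that these form a $\|\cdot\|$-bounded family that is $\tau$-null uniformly in $t$ as $n\to\infty$ (precisely because $\|x_n\|_{F_\alpha}$ is bounded and $q_{F_\alpha}(x_n)\to 0$), and then invoke the local bi-equicontinuity of $(T(t))_{t\geq 0}$ on $X_0$ to reach a contradiction. Your explicit attention to the diagonal selection of the pair $(s_k,t_k)$ from the supremum is exactly the bookkeeping the paper performs, so there is nothing further to add.
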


\begin{proof}Let $(x_n)_{n\in\NN}$ be a $\|\cdot\|_{F_{\alpha}}$-bounded sequence which converges to zero with respect to $\tau_{F_{\alpha}}$ and assume that $(T(t)x_n)_{n\in\NN}$ does not converges to zero uniformly for $t\in[0,t_0]$ for some $t_0>0$. Hence there exists $p\in\semis$, $\delta>0$ and a sequence $(t_n)_{n\in\NN}$ of positive real numbers such that 
\[
p_{F_{\alpha}}(T(t_n)x_n)>\delta
\]
for all $n\in\NN$. As a consequence there exists a sequence $(s_n)_{n\in\NN}$ in $\RR$ which is a null-sequence such that
\[
\frac{p(T(s_n)T(t_n)x_n-T(t_n)x_n)}{s_n^{\alpha}}>\delta
\]
for each $n\in\NN$. Now notice that the sequence $(y_n)_{n\in\NN}$ defined by $y_n:=\frac{T(s_n)x_n-x_n}{s_n^{\alpha}}$ is a $\tau$-null sequence since for $q\in\semis$:
\[
\frac{q(T(s_n)x_n-x_n)}{s_n^{\alpha}}\leq\sup_{s>0}\frac{q(T(s)x_n-x_n)}{s^{\alpha}}
\]
and the term on the right hand side converges to zero as $n\to\infty$ by assumption. Using the local bi-equicontinuity of the semigroup $(T(t))_{t\geq0}$ with respect to $\tau$, we conclude that $\frac{T(t)T(s_n)x_n-T(t)x_n}{s_n}$ converges to zero uniformly for $t\in[0,t_0]$, contradiction. Hence we conclude that $(T(t))_{t\geq0}$ is locally bi-equicontinuous on $X_{\alpha}$.
\end{proof}

\begin{remark}
Notice that the local bi-equicontinuity with respect to $\tau_{F_\alpha}$ holds on the whole space $F_{\alpha}$, while strong $\tau_{F_\alpha}$-continuity holds on $X_{\alpha}$ only. In particular, we will see in Theorem \ref{thm:strong}, that $X_{\alpha}$ is the space of strong $\tau_{F_{\alpha}}$-continuity.
\end{remark}

We can summarize the previous results in the following theorem.

\begin{theorem}
Let $(T(t))_{t\geq0}$ be a bi-continuous semigroup on $X_0$. Then the restricted operators $T_\alpha(t):=T(t)|_{X_\alpha}$ to $X_{\alpha}$ form a bi-continuous semigroup. Moreover, the generator $A_\alpha$ of $(T_\alpha(t))_{t\geq 0}$ is the part of $A$ in $X_{\alpha}$ which is continuous with respect to $\tau_{F_{\alpha}}$ and $\tau_{F_{\alpha-1}}$.
\end{theorem}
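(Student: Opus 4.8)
The plan is to prove the two assertions separately: the first is essentially a bookkeeping of the facts assembled just above, the second needs a short resolvent argument to reconcile the two topologies.

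\emph{Bi-continuity of $(T_\alpha(t))_{t\ge0}$.} I would check the four conditions of Definition \ref{def:bicontsemi} for the triple $(X_\alpha,\|\cdot\|_{F_\alpha},\tau_{F_\alpha})$ and the restricted operators $T_\alpha(t)=T(t)|_{X_\alpha}$. That this triple satisfies Assumption \ref{asp:bicontspace} is inherited from $(F_\alpha,\|\cdot\|_{F_\alpha},\tau_{F_\alpha})$ (which does so by the Lemma above): since $X_\alpha$ is bi-closed in $F_\alpha$, it is a $\|\cdot\|_{F_\alpha}$-closed subspace whose closed unit ball is $\tau_{F_\alpha}$-sequentially complete (a $\tau_{F_\alpha}$-Cauchy, $\|\cdot\|_{F_\alpha}$-bounded sequence converges in $F_\alpha$ to a point lying in $X_\alpha$ by bi-closedness, and the norm is $\tau_{F_\alpha}$-lower semicontinuous), and the norming property passes to $X_\alpha$ by restricting the $\tau_{F_\alpha}$-continuous functionals on $F_\alpha$ via Hahn--Banach. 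The semigroup law and local boundedness are trivial, invariance of $X_\alpha$ under $(T(t))_{t\ge0}$ is the Lemma above, exponential boundedness follows from $\|T_\alpha(t)x\|_{F_\alpha}=\sup_{s>0}\|T(t)(T(s)x-x)\|/s^\alpha\le\|T(t)\|\,\|x\|_{F_\alpha}$, strong $\tau_{F_\alpha}$-continuity on $X_\alpha$ is Proposition \ref{prop:strongcontalpha}, and local bi-equicontinuity holds on all of $F_\alpha$ by the Proposition above, hence on $X_\alpha$. Thus $(T_\alpha(t))_{t\ge0}$ is a bi-continuous semigroup on $X_\alpha$.

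\emph{Identification of the generator.} Write $B$ for the generator of $(T_\alpha(t))_{t\ge0}$ and $A_\alpha$ for the part of $A$ in $X_\alpha$. First I would check that for large $\lambda$ the operator $R(\lambda,A)$ maps $X_\alpha$ into itself: it commutes with $A^{-1}$, hence leaves $\Fav_\alpha(A)$ invariant by the earlier invariance proposition, and for $x\in X_\alpha$ one has $(T(t)R(\lambda,A)x-R(\lambda,A)x)/t^\alpha=R(\lambda,A)\bigl((T(t)x-x)/t^\alpha\bigr)$, a norm-bounded, $\tau$-null family (it suffices to test along sequences $t_n\downarrow0$) whose image under the sequentially $\tau$-continuous operator $R(\lambda,A)$ is again $\tau$-null, so $R(\lambda,A)x\in X_\alpha$. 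It follows in the standard way that $\lambda\in\rho(A_\alpha)$ with $R(\lambda,A_\alpha)=R(\lambda,A)|_{X_\alpha}$. Next I would prove $B\supseteq A_\alpha$: for $x\in\dom(A_\alpha)$ put $u:=Ax\in X_\alpha$; formula \eqref{eq:midnightDA} gives $\tfrac1t(T(t)x-x)-u=\tfrac1t\int_0^t(T(s)-I)u\,\dd s$, and expanding $(T(r)-I)$ under the integral one estimates, for every $p\in\semis$,
\[
p_{F_\alpha}\Bigl(\tfrac1t(T(t)x-x)-u\Bigr)\le\tfrac1t\int_0^t p_{F_\alpha}(T(s)u-u)\,\dd s\le\sup_{0\le s\le t}p_{F_\alpha}(T(s)u-u)\longrightarrow 0
\]
as $t\to0$ by Proposition \ref{prop:strongcontalpha}, while $\sup_{t\in(0,1]}\|\tfrac1t(T(t)x-x)\|_{F_\alpha}\le\bigl(\sup_{0\le s\le1}\|T(s)\|\bigr)\|u\|_{F_\alpha}<\infty$ by the same type of computation; hence $x\in\dom(B)$ and $Bx=u=Ax$. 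Finally, both $\lambda-B$ (since $B$ generates a bi-continuous semigroup) and $\lambda-A_\alpha$ are bijections onto $X_\alpha$ for large $\lambda$, and $\lambda-B$ extends $\lambda-A_\alpha$; injectivity of $\lambda-B$ then forces $\dom(B)=\dom(A_\alpha)$, so $B=A_\alpha$. The remaining continuity claim for $A_\alpha$ between $\tau_{F_\alpha}$ and $\tau_{F_{\alpha-1}}$ follows from the way the topologies $\tau_{F_\beta}$ are produced (Proposition \ref{prop:Edef}, Definition \ref{def:biHoelder}) together with the identification $A_{-1}F_\alpha=F_{\alpha-1}$ of Proposition \ref{cor:ExtFav}.

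\emph{Main obstacle.} The delicate point is the coexistence of the two topologies $\tau$ and $\tau_{F_\alpha}$: the generator $B$ is a priori defined through $\tau_{F_\alpha}$-limits, whereas $A_\alpha$ is anchored to the $\tau$-structure of $X_0$, and $\tau_{F_\alpha}$ need not be comparable with $\tau$ on $X_\alpha$. This is why the inclusion $B\supseteq A_\alpha$ must be obtained by the explicit seminorm estimate above rather than by transporting the Laplace integral, and the reverse inclusion only indirectly, via surjectivity of $\lambda-B$ and $\lambda-A_\alpha$; verifying the invariance of $X_\alpha$ under $R(\lambda,A)$ and re-deriving Assumption \ref{asp:bicontspace} for $X_\alpha$ from bi-closedness are the other spots that need some care.
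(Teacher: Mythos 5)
Your proof is correct, and for the generator identification it takes a genuinely different route from the paper. The paper splits the work as: (i) $A_\alpha\subseteq A|_{X_\alpha}$ (generator contained in the part), deduced from the continuity of the embedding $(X_\alpha,\tau_{F_\alpha})\hookrightarrow(X_0,\tau)$, so that a $\tau_{F_\alpha}$-limit of the difference quotient is automatically a $\tau$-limit; and (ii) $A|_{X_\alpha}\subseteq A_\alpha$, obtained by identifying $R(\lambda,A_\alpha)$ with $R(\lambda,A)|_{X_\alpha}$ through the Laplace-transform representation of both resolvents and writing $x=R(\lambda,A)(\lambda-A)x$. You reverse the decomposition: you prove the inclusion part $\subseteq$ generator \emph{directly}, via \eqref{eq:midnightDA} and the seminorm estimate $p_{F_\alpha}\bigl(\tfrac1t(T(t)x-x)-Ax\bigr)\le\sup_{0\le s\le t}p_{F_\alpha}(T(s)Ax-Ax)$, and then close the gap by the standard bijectivity argument, for which you verify that $R(\lambda,A)$ leaves $X_\alpha$ invariant. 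What your route buys is that you never have to compare the $\tau_{F_\alpha}$-improper Laplace integral of the restricted semigroup with the $\tau$-improper integral in $X_0$ (a compatibility the paper uses tacitly when it writes $R(\lambda,C)x=\int_0^\infty\ee^{-\lambda s}T(s)x\,\dd s=R(\lambda,A)x$), nor do you need the slightly delicate fact that norm-bounded $\tau_{F_\alpha}$-null sequences are $\tau$-null; the price is the explicit invariance check for $R(\lambda,A)$ on $X_\alpha$ and the integral estimate. Your additional observation that $(X_\alpha,\|\cdot\|_{F_\alpha},\tau_{F_\alpha})$ itself satisfies Assumption \ref{asp:bicontspace} (inherited from $F_\alpha$ via bi-closedness) is a point the paper leaves implicit but which is genuinely needed for the statement to make sense. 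Both arguments are sound.
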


\begin{proof}
We only have to prove the last part of the theorem, namely that the part of $A$ in $X_{\alpha}$ generates the restricted semigroup on $X_{\alpha}$. The proof goes similarly to that of Proposition in \cite[Chap. II, Par. 2.3]{EN}. Since the embedding $X_{\alpha}\subseteq X_0$ is continuous for the topologies $\tau_{F_\alpha}$ and $\tau$, we conclude that $A_\alpha\subseteq A\rest_{X_{\alpha}}$. For the converse take $\lambda\in\RR$ large enough such that
\[
R(\lambda,C)x=\int_0^{\infty}{\ee^{-\lambda s}T(s)x\ \dd s}=R(\lambda,A)x,\ \ x\in X_{\alpha}.
\]
For $x\in\dom(A_{|X_{\alpha}})$ we obtain
\[
x=R(\lambda,A)(\lambda-A)x=R(\lambda,C)(\lambda-A)x\in\dom(C)
\]
and hence $A_{|X_{\alpha}}\subseteq A_\alpha$. This proves that the part of $A$ in $X_{\alpha}$ generates the restricted semigroup.

\end{proof}

By similar reasoning as in Lemma \ref{lem:spacecont} one can prove the following result:
\begin{theorem}\label{thm:strong}
Let $\alpha\in(0,1)$ and let $(T(t))_{t\geq0}$ be a bi-continuous semigroup on $X$. Then $\dom(A)$ is $\tau_{F_{\alpha}}$-bi-dense in $X_{\alpha}$ and
\begin{equation}\label{eq:xalphacont}
X_{\alpha}=\bigl\{x\in F_{\alpha}:\ \tau_{F_{\alpha}}\lim_{t\to0}T(t)x=x\bigr\},
\end{equation}
i.e., for $x\in F_\alpha$ the mapping $t\mapsto T(t)x$ is $\tau_{F_\alpha}$-continuous, if and only if $x\in X_\alpha$.
\end{theorem}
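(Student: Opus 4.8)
The plan is to prove the set identity \eqref{eq:xalphacont} by two inclusions and to read off the bi-density statement from the structure of the restricted semigroup. Throughout, $\semis$ denotes the defining family of seminorms, so that $p_{F_\alpha}(y)=\sup_{s>0}s^{-\alpha}p(T(s)y-y)$, and I abbreviate the right-hand side of \eqref{eq:xalphacont} by $Z$; thus $x\in Z$ means $x\in F_\alpha$ and $p_{F_\alpha}(T(t)x-x)\to0$ as $t\downto0$ for every $p\in\semis$. First I would record the analogue of the first half of Lemma~\ref{lem:spacecont}: for $x\in\dom(A)$, formula \eqref{eq:midnightDA} gives $\|T(t)x-x\|\leq Mt\|Ax\|$ with $M=\sup_{s\geq0}\|T(s)\|$, so $x\in F_\alpha$ and $t^{-\alpha}p(T(t)x-x)\leq Mt^{1-\alpha}\|Ax\|\to0$ for every $p\in\semis$; hence $\dom(A)\subseteq X_\alpha$. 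For the bi-density I would not argue by hand but invoke the theorem immediately preceding: $(T_\alpha(t))_{t\geq0}$ is a bi-continuous semigroup on $(X_\alpha,\|\cdot\|_{F_\alpha},\tau_{F_\alpha})$ with generator $A_\alpha$ the part of $A$ in $X_\alpha$. Since the domain of the generator of a bi-continuous semigroup is always bi-dense in the underlying space (a basic property recalled above) and $\dom(A_\alpha)\subseteq\dom(A)$, the set $\dom(A)$ is $\tau_{F_\alpha}$-bi-dense in $X_\alpha$.

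The inclusion $X_\alpha\subseteq Z$ is immediate from Proposition~\ref{prop:strongcontalpha}, where $(T(t))_{t\geq0}$ is shown to be strongly $\tau_{F_\alpha}$-continuous on $X_\alpha$: continuity at $t=0$ is precisely membership in $Z$. For the reverse inclusion $Z\subseteq X_\alpha$ I would run a telescoping estimate in the spirit of Lemma~\ref{lem:spacecont}. Fix $x\in Z$ and $p\in\semis$. Since $x\in F_\alpha$ we have $\|T(\tau)x-x\|\leq\tau^\alpha\|x\|_{F_\alpha}$ for all $\tau>0$, which already supplies the norm bound in the definition of $X_\alpha$; it remains to show $t^{-\alpha}p(T(t)x-x)\to0$ as $t\downto0$. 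For $n\in\NN$ and $h>0$ the semigroup law gives the identity
\[
n\bigl(T(h)x-x\bigr)=\bigl(T(nh)x-x\bigr)-\sum_{k=1}^{n-1}\Bigl[T(kh)\bigl(T(h)x-x\bigr)-\bigl(T(h)x-x\bigr)\Bigr],
\]
and, applying the definition of $p_{F_\alpha}$ to the vector $y=T(h)x-x$, each bracket satisfies $p\bigl(T(kh)y-y\bigr)\leq(kh)^\alpha\,p_{F_\alpha}(T(h)x-x)$. Using $\sum_{k=1}^{n-1}k^\alpha\leq n^{\alpha+1}/(\alpha+1)$ and $p(T(nh)x-x)\leq(nh)^\alpha\|x\|_{F_\alpha}$, and dividing by $nh^\alpha$, one arrives at
\[
\frac{p(T(h)x-x)}{h^\alpha}\ \leq\ n^{\alpha-1}\|x\|_{F_\alpha}\ +\ \frac{n^\alpha}{\alpha+1}\,p_{F_\alpha}\bigl(T(h)x-x\bigr).
\]
Given $\eta>0$, I would first choose $n$ so large that $n^{\alpha-1}\|x\|_{F_\alpha}<\eta/2$ (this is where $\alpha<1$ is used), and then, with $n$ fixed, use $x\in Z$ to find $\delta>0$ with $p_{F_\alpha}(T(h)x-x)<(\alpha+1)\eta/(2n^\alpha)$ for $0<h<\delta$; hence $h^{-\alpha}p(T(h)x-x)<\eta$ for $0<h<\delta$. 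As $p\in\semis$ was arbitrary this gives $\tlim_{h\downto0}h^{-\alpha}(T(h)x-x)=0$, i.e.\ $x\in X_\alpha$, completing \eqref{eq:xalphacont}. The ``if and only if'' reformulation is then immediate: for $x\in F_\alpha$, continuity of $t\mapsto T(t)x$ on $[0,\infty)$ in particular forces continuity at $0$, i.e.\ $x\in Z=X_\alpha$; conversely $x\in X_\alpha$ yields full $\tau_{F_\alpha}$-continuity because $(T_\alpha(t))_{t\geq0}$ is a bi-continuous semigroup on $X_\alpha$.

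The step I expect to be the real obstacle is the telescoping estimate: the hypothesis $x\in Z$ controls only $\sup_{s>0}s^{-\alpha}p\bigl(T(s+h)x-T(s)x-(T(h)x-x)\bigr)$ for small $h$, and this must be converted into control of the single quotient $h^{-\alpha}p(T(h)x-x)$. Comparing $n(T(h)x-x)$ with $T(nh)x-x$ is essentially the only available device, and it succeeds precisely because the two resulting error terms decouple: $n^{\alpha-1}\|x\|_{F_\alpha}$ is killed by taking $n$ large (using $\alpha<1$), and the remaining term by then taking $h$ small with $n$ already fixed. Everything else — the properties of $\dom(A)$, the behaviour of $\|\cdot\|_{F_\alpha}$ and $p_{F_\alpha}$, the bi-continuity of $(T_\alpha(t))_{t\geq0}$ — has been established earlier and enters only as routine bookkeeping.
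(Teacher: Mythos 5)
Your proof is correct, but the decisive inclusion is handled by a genuinely different device than in the paper. For $Z\subseteq X_\alpha$ the paper takes $x$ in the set of $\tau_{F_\alpha}$-strong continuity, forms the Ces\`aro means $x_n:=n\int_0^{1/n}T(t)x\,\dd t\in\dom(A)$, notes that these are $\|\cdot\|_{F_\alpha}$-bounded and $\tau_{F_\alpha}$-convergent to $x$, and then invokes the bi-closedness of $X_\alpha$ in $F_\alpha$ (part (c) of the lemma following Definition \ref{def:biHoelder}); the bi-density of $\dom(A)$ in $X_\alpha$ falls out of the same computation as a byproduct. You instead prove $Z\subseteq X_\alpha$ by the telescoping identity $n(T(h)x-x)=(T(nh)x-x)-\sum_{k=1}^{n-1}\bigl[T(kh)y-y\bigr]$ with $y=T(h)x-x$, which after the estimates $p(T(kh)y-y)\leq(kh)^\alpha p_{F_\alpha}(y)$ and $\sum_{k=1}^{n-1}k^\alpha\leq n^{\alpha+1}/(\alpha+1)$ yields $h^{-\alpha}p(T(h)x-x)\leq n^{\alpha-1}\|x\|_{F_\alpha}+\tfrac{n^\alpha}{\alpha+1}p_{F_\alpha}(T(h)x-x)$; the two error terms decouple exactly as you say, with $\alpha<1$ killing the first. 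I checked the identity and the estimates and they are sound. Your route is more elementary and self-contained for the set equality (it needs neither the bi-closedness lemma nor the fact that the Riemann means land in $\dom(A)$), but it buys the bi-density statement separately by appealing to the preceding theorem that $(T_\alpha(t))_{t\geq0}$ is bi-continuous on $X_\alpha$ with generator the part of $A$ — which is legitimate, since that theorem does not rely on Theorem \ref{thm:strong}, but it does require noting that the triple $(X_\alpha,\|\cdot\|_{F_\alpha},\tau_{F_\alpha})$ satisfies Assumption \ref{asp:bicontspace} so that the bi-density of generator domains applies. The paper's argument is shorter because one pass through the Ces\`aro means delivers both conclusions at once. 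One small bookkeeping point in your first step: $Mt^{1-\alpha}\|Ax\|$ controls the quotient only for $t\leq1$; for $t\geq1$ you should fall back on $\|T(t)x-x\|/t^\alpha\leq(M+1)\|x\|$, as in \eqref{equ:Fav}, or simply cite the already established chain $\dom(A)\subseteq\xX_\alpha\subseteq X_\alpha$.
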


\begin{proof}
Denote by $X_{\alpha,\conti}$ the right-hand side of \eqref{eq:xalphacont}, i.e., the space of $\tau_{F_\alpha}$-strong continuity.
Notice first of all that $\dom(A)\subseteq \xX_\alpha\subseteq X_\alpha\subseteq X_{\alpha,\conti}$.

\medskip\noindent Suppose $x\in X_{\alpha,\conti}$. For each $n\in \NN$ we have
\begin{align*}x_n:={n\int_0^\frac{1}{n}{T_\alpha(t)x\ \dd t}}={n\int_0^\frac{1}{n}{T(t)x\ \dd t}}\in\dom(A)\end{align*}
as a $\tau$- and $\tau_{F_\alpha}$-convergent Riemann integral. Whence it follows that $x_n\stackrel{\tau_{F_{\alpha}}}\to x$, whereas the $\|\cdot\|_{F_{\alpha}}$-boundedness of $(x_n)_{n\in\NN}$ clear. We conclude that $x\in X_\alpha$ (because $X_\alpha$ is bi-closed in $F_\alpha$), implying $X_{\alpha,\conti}\subseteq X_\alpha$. As a byproduct we also obtain that $\dom(A)$ is bi-dense in $X_\alpha$.
\end{proof}

\begin{proposition}
For $0\leq \alpha< \beta\leq 1$ we have
\[
X_1=\dom(A)\inj F_\beta\inj \xX_\alpha\subseteq X_\alpha,
\]
where the embeddings are continuous for the respective norms and for the respective topologies $\tau_{1}$, $\tau_{F_\beta}$, $\tau_{F_\alpha}$. 
The space $\dom(A)$ bi-dense in $X_\alpha$, and as a consequence $X_\beta$ is bi-dense in $X_\alpha$.
\end{proposition}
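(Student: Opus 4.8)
The plan is to treat the three links $X_1=\dom(A)\inj F_\beta$, $F_\beta\inj\xX_\alpha$ and $\xX_\alpha\subseteq X_\alpha$ one at a time, verifying in each case first continuity for the norms by a direct estimate and then continuity for the locally convex topologies; in the bi-continuous setting the latter amounts (cf.\ Proposition~\ref{prop:Edef}) to showing that a $\|\cdot\|$-bounded sequence which is null in the source topology is mapped to a null sequence in the target topology. The ingredients I would use are the negative growth bound $\|T(t)\|\le M\ee^{-\delta t}$ (with $\delta>0$, $M\ge1$), the formulas \eqref{eq:midnight} and \eqref{eq:midnightDA}, the inequality $p\bigl(\int_0^tT(s)z\,\dd s\bigr)\le\int_0^tp(T(s)z)\,\dd s$ valid for every $\tau$-continuous seminorm $p$, and local bi-equicontinuity of $(T(t))_{t\ge0}$ on compact time intervals. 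The density claims will then be read off from Theorem~\ref{thm:strong}.

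For the link $X_1\inj F_\beta$ I would start from $T(t)x-x=\int_0^tT(s)Ax\,\dd s$ (formula \eqref{eq:midnightDA}, $x\in\dom(A)$), so that $t^{-\beta}\|T(t)x-x\|\le t^{-\beta}\int_0^tM\ee^{-\delta s}\|Ax\|\,\dd s$, which is $\le M\|Ax\|$ for $t\le1$ and $\le(M/\delta)\|Ax\|$ for $t\ge1$; hence $\|x\|_{F_\beta}\le C\|x\|_{X_1}$. For the topology, take a $\|\cdot\|_{X_1}$-bounded sequence $(x_n)$ in $\dom(A)$ that is $\tau_1$-null, i.e.\ with $(Ax_n)$ norm-bounded and $\tau$-null, fix a $\tau$-continuous seminorm $p$, and split $p_{F_\beta}(x_n)=\sup_{t>0}t^{-\beta}p\bigl(\int_0^tT(s)Ax_n\,\dd s\bigr)$ at a threshold $T_1$: for $t\le T_1$ it is $\le T_1^{1-\beta}\sup_{s\in[0,T_1]}p(T(s)Ax_n)$, which tends to $0$ by local bi-equicontinuity, while for $t>T_1$ it is $\le T_1^{-\beta}\bigl(\int_0^{T_1}p(T(s)Ax_n)\,\dd s+\int_{T_1}^\infty\|T(s)Ax_n\|\,\dd s\bigr)$, whose first term again tends to $0$ and whose second term is $\le(M/\delta)\ee^{-\delta T_1}\sup_n\|Ax_n\|$, hence arbitrarily small once $T_1$ is large. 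Thus $p_{F_\beta}(x_n)\to0$.

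For the link $F_\beta\inj\xX_\alpha$ with $0\le\alpha<\beta\le1$, the identity $t^{-\alpha}\|T(t)x-x\|=t^{\beta-\alpha}\cdot t^{-\beta}\|T(t)x-x\|\le t^{\beta-\alpha}\|x\|_{F_\beta}$ takes care of $0<t\le1$ and also gives $\lim_{t\to0}t^{-\alpha}\|T(t)x-x\|=0$, so $x\in\XHol_\alpha(A)=\xX_\alpha$ when $\alpha\in(0,1)$; for $t\ge1$ I would first record the norm bound $\|x\|\le2(t^\ast)^\beta\|x\|_{F_\beta}$ with $t^\ast:=\delta^{-1}\ln(2M)$ (from $\|x\|\le\|T(t^\ast)x-x\|+\|T(t^\ast)x\|\le(t^\ast)^\beta\|x\|_{F_\beta}+\tfrac12\|x\|$) and then estimate $t^{-\alpha}\|T(t)x-x\|\le\|T(t)x-x\|\le(M+1)\|x\|\le C'\|x\|_{F_\beta}$, so altogether $\|x\|_{F_\alpha}\le C''\|x\|_{F_\beta}$. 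For the topology, let $(x_n)$ be $\|\cdot\|_{F_\beta}$-bounded and $\tau_{F_\beta}$-null; from $p(x_n)\le t^\beta p_{F_\beta}(x_n)+M\ee^{-\delta t}\|x_n\|$ (valid for all $t>0$) together with the uniform norm bound one gets $p(x_n)\to0$, i.e.\ $x_n\to0$ in $\tau$, and then splitting $p_{F_\alpha}(x_n)$ at a threshold $T_1$ — the part $t\le T_1$ being $\le T_1^{\beta-\alpha}p_{F_\beta}(x_n)\to0$, the part $t>T_1$ being $\le T_1^{-\alpha}\bigl(M\ee^{-\delta T_1}\sup_n\|x_n\|+p(x_n)\bigr)$ — yields $p_{F_\alpha}(x_n)\to0$. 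For $\alpha=0$ the link is the set inclusion $F_\beta\subseteq\xX_0$ (Section~\ref{sec:minimal}), whose norm-continuity is $\|x\|\le2(t^\ast)^\beta\|x\|_{F_\beta}$ and whose bi-continuity is precisely the fact, just noted, that norm-bounded $\tau_{F_\beta}$-null sequences are $\tau$-null.

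Finally, $\xX_\alpha\subseteq X_\alpha$ is immediate from the definitions: if $t^{-\alpha}\|T(t)x-x\|\to0$ then \emph{a fortiori} $t^{-\alpha}(T(t)x-x)\to0$ in $\tau$, so $\XHol_\alpha(A)\subseteq X_\alpha$, both spaces carry the norm and topology inherited from $F_\alpha$, and $\xX_\alpha=\XHol_\alpha(A)$ is $\|\cdot\|_{F_\alpha}$-closed, hence a closed subspace of $X_\alpha$ (for $\alpha=0$ this is the usual $\overline{\dom(A)}\subseteq X_0$). For the density, $\dom(A)$ is bi-dense in $X_\alpha$ — Theorem~\ref{thm:strong} for $\alpha\in(0,1)$, bi-density of $\dom(A)$ in $X_0$ for $\alpha=0$ — and since $\dom(A)=X_1\subseteq\xX_\alpha\subseteq X_\alpha$ (because $t^{-\alpha}\|T(t)x-x\|\le Mt^{1-\alpha}\|Ax\|\to0$ for $x\in\dom(A)$) and likewise $\dom(A)\subseteq X_\beta\subseteq X_\alpha$ with $X_\beta$ closed in $F_\beta$, the $\|\cdot\|_{F_\alpha}$-bounded approximating sequences from $\dom(A)$ provided by Theorem~\ref{thm:strong} already lie in $X_\beta$, so $X_\beta$ is bi-dense in $X_\alpha$ too. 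I expect the main obstacle to be the tail $t\to\infty$: the seminorms generating $\tau_{F_\beta}$ do not dominate the norm of $X_0$, so genuine topological continuity of the embeddings cannot be obtained from a one-line seminorm estimate, and one must instead argue with norm-bounded sequences and use the negative growth bound to annihilate the integral over $[T_1,\infty)$, keeping local bi-equicontinuity for the compact piece $[0,T_1]$; a secondary nuisance is the separate bookkeeping for the boundary parameters $\beta=1$ (where $X_\beta=\dom(A)$) and $\alpha=0$ (where $\xX_0$ carries the original topology $\tau$).
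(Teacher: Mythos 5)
The paper states this proposition without a proof of its own, so there is nothing to compare against line by line; your argument is correct and fills the gap using exactly the toolkit the paper deploys for the neighbouring results (the integral representations \eqref{eq:midnight}--\eqref{eq:midnightDA}, splitting $\sup_{t>0}$ at a finite threshold with the negative growth bound killing the tail and local bi-equicontinuity handling the compact part, and Theorem \ref{thm:strong} for the density claims). The only interpretive choice you make --- reading ``continuous for the respective topologies'' as sequential continuity on norm-bounded sets --- is the right one, since that is the sense in which every other ``bi-'' notion in the paper is formulated and a genuine seminorm-domination estimate is not available here; you state and justify this explicitly, so no gap remains.
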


\subsection{Representation of H\"older spaces by generators}
Analogously to the Proposition \ref{prop:FavCont} we have a representation of the H\"older space $X_{\alpha}$ by means of the semigroup generator.

\begin{theorem}
Let $(T(t))_{t\geq0}$ be a bi-continuous semigroup with negative growth bound and generator $A$. 
For $\alpha\in(0,1)$ we have 
\begin{equation}
\label{eq:Xalphadef}
X_{\alpha}=\Bigl\{x\in X_0:\ \tlim_{\lambda\to\infty}{\lambda^{\alpha}AR(\lambda,A)x}=0\text{ and } \sup_{\lambda>0}{\|\lambda^{\alpha}AR(\lambda,A)x\|}<\infty\Bigr\}.\end{equation}
\end{theorem}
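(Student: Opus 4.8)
The plan is to show that both sides describe the same subset of the common ambient Banach space $\Fav_\alpha(A)$. First I would note that the two finiteness conditions coincide: by Proposition \ref{prop:FavCont}, applied to the bi-continuous semigroup (whose restriction to $\xX_0$ is a $C_0$-semigroup), one has $\Fav_\alpha(A)=\{x\in X_0:\sup_{t>0}t^{-\alpha}\|T(t)x-x\|<\infty\}$ with equivalent norms, so $\sup_{\lambda>0}\|\lambda^\alpha AR(\lambda,A)x\|<\infty$ holds precisely when $\sup_{t>0}t^{-\alpha}\|T(t)x-x\|<\infty$, which is the norm part of Definition \ref{def:biHoelder}. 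Hence it remains to prove, for $x\in\Fav_\alpha(A)$ (so that $\psi(s):=s^{-\alpha}(T(s)x-x)$ satisfies $\|\psi(s)\|\le M$ for all $s>0$ and is $\tau$-continuous on $(0,\infty)$), the equivalence
\[
\tlim_{s\to0}\psi(s)=0\quad\Longleftrightarrow\quad\tlim_{\lambda\to\infty}\lambda^\alpha AR(\lambda,A)x=0 .
\]

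The bridge between the two moduli is the identity
\[
\lambda^\alpha AR(\lambda,A)x=\int_0^\infty \ee^{-r}\,r^\alpha\,\psi(r/\lambda)\ \dd r ,
\]
which I would obtain from $R(\lambda,A)x=\int_0^\infty\ee^{-\lambda s}T(s)x\,\dd s$ by writing $AR(\lambda,A)x=\lambda R(\lambda,A)x-x=\lambda\int_0^\infty\ee^{-\lambda s}(T(s)x-x)\,\dd s$ (using $\lambda\int_0^\infty\ee^{-\lambda s}\,\dd s=1$) and substituting $s=r/\lambda$ on the proper integrals $\int_0^b$ before passing to the limit; the bound on $\psi$ makes every integral absolutely (hence $\tau$-) convergent. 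From this the implication ``$\Leftarrow$'' is a dominated-convergence argument: for each $p\in\semis$ one pulls $p$ inside the $\tau$-improper integral (legitimate since $p$ is $\tau$-continuous and $p\le\|\cdot\|$) to get $p(\lambda^\alpha AR(\lambda,A)x)\le\int_0^\infty\ee^{-r}r^\alpha p(\psi(r/\lambda))\,\dd r$; since $p(\psi(r/\lambda))\le M$ and $p(\psi(r/\lambda))\to0$ pointwise in $r$ as $\lambda\to\infty$, the right-hand side tends to $0$, and together with the norm bound this puts $x$ in the right-hand set.

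For the converse I would use that $x_\lambda:=\lambda R(\lambda,A)x\in\dom(A)$, whence
\[
T(t)x-x=-T(t)AR(\lambda,A)x+\int_0^tT(s)\,\lambda AR(\lambda,A)x\ \dd s+AR(\lambda,A)x ,
\]
the middle term being $T(t)x_\lambda-x_\lambda=\int_0^tT(s)Ax_\lambda\,\dd s$. Dividing by $t^\alpha$ and setting $\lambda=1/t$ together with $y_t:=\lambda^\alpha AR(\lambda,A)x\big|_{\lambda=1/t}$, the three terms become exactly $-T(t)y_t$, $\ t^{-1}\int_0^tT(s)y_t\,\dd s$ and $y_t$. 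Along any null sequence $t_n\downto0$ the vectors $y_{t_n}$ form a $\|\cdot\|$-bounded, $\tau$-null sequence (by hypothesis and finiteness of $\sup_\lambda\|\lambda^\alpha AR(\lambda,A)x\|$); local bi-equicontinuity then yields $T(s)y_{t_n}\to0$ in $\tau$ uniformly for $s\in[0,1]$, which kills the first term at $s=t_n$ and, after estimating $p$ of the average $t_n^{-1}\int_0^{t_n}T(s)y_{t_n}\,\dd s$ by $\sup_{s\in[0,t_n]}p(T(s)y_{t_n})$, also the second term; the third is $y_{t_n}\to0$ by hypothesis. Hence $t_n^{-\alpha}(T(t_n)x-x)\to0$ in $\tau$, and since ``$\tlim_{t\to0}$'' may be tested on sequences, $x\in X_\alpha$.

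The main obstacle is not conceptual but lies in the $\tau$-topological bookkeeping: justifying the interchange of $\tau$-continuous seminorms with $\tau$-improper integrals, performing the substitution $s=r/\lambda$ at the level of the proper integrals and then passing to the limit, and keeping track of the fact that ``$\tlim$'' is a sequential notion so that the local bi-equicontinuity hypothesis (phrased for norm-bounded $\tau$-null sequences) is precisely what the decomposition calls for. Once these points are settled the computation parallels the classical $C_0$-case, cf.\ \cite[Section~II.5]{EN}.
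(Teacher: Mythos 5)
Your proposal is correct and follows essentially the same route as the paper: the forward inclusion via the representation $\lambda^{\alpha}AR(\lambda,A)x=\lambda^{\alpha+1}\int_0^\infty \ee^{-\lambda s}(T(s)x-x)\,\dd s$ (your change of variables $r=\lambda s$ plus dominated convergence is just a repackaging of the paper's splitting of the integral at $\delta$), and the converse via the decomposition $x=\lambda R(\lambda,A)x-AR(\lambda,A)x$ with the choice $\lambda=1/t$ and local bi-equicontinuity applied to the norm-bounded $\tau$-null net $\lambda^{\alpha}AR(\lambda,A)x$, exactly as in the paper. The only blemish is the mislabeled ``$\Leftarrow$'' for what is in fact the ``$\Rightarrow$'' direction of your displayed equivalence; the mathematics is unaffected.
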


\begin{proof}
Suppose $x\in X_{\alpha}$. From Proposition \ref{prop:FavCont} we deduce immediately 
\[
\sup_{\lambda>0}{\|\lambda^{\alpha}AR(\lambda,A)x\|}<\infty.
\]
Let now $\varepsilon>0$ be arbitrary. Then, since $x\in X_{\alpha}$, for $p\in\semis$ we can find $\delta>0$ such that $0\leq t<\delta$ implies that $\frac{p(T(t)x-x)}{t^{\alpha}}<\varepsilon$. Recall the following formula for the resolvent:
\begin{align*}\lambda^{\alpha}AR(\lambda,A)x=\lambda^{\alpha+1}\int_0^{\infty}{\ee^{-\lambda s}(T(s)x-x)\ \dd s}.\end{align*} 
From this we deduce
\begin{align*}
p(\lambda^{\alpha}AR(\lambda,A)x)&\leq\lambda^{\alpha+1}\int_0^{\infty}{\ee^{-\lambda s}\cdot\frac{p(T(s)x-x)}{s^{\alpha}}s^{\alpha}\ \dd s}\\
&=\lambda^{\alpha+1}\int_0^{\delta}{\ee^{-\lambda s}\cdot\frac{p(T(s)x-x)}{s^{\alpha}}s^{\alpha}\ \dd s}+\lambda^{\alpha+1}\int_{\delta}^{\infty}{\ee^{-\lambda s}\cdot\frac{p(T(s)x-x)}{s^{\alpha}}s^{\alpha}\ \dd s}\\
&<\lambda^{\alpha+1}\varepsilon\int_0^{\delta}{\ee^{-\lambda s}s^{\alpha}\ \dd s}+\lambda^{\alpha+1}\int_{\delta}^{\infty}{\ee^{-\lambda s}\cdot\frac{\|T(s)x-x\|}{s^{\alpha}}s^{\alpha}\ \dd s}\\
&\leq\lambda^{\alpha+1}\varepsilon\int_0^{\delta}{\ee^{-\lambda s}s^{\alpha}\ \dd s}+\|x\|_{F_\alpha}\lambda^{\alpha+1}\int_{\delta}^{\infty}{\ee^{-\lambda s}\cdot s^{\alpha}\ \dd s}\\
&=\varepsilon\int_{0}^{\lambda\delta}{\ee^{-t}t^{\alpha}\ \dd t}+\|x\|_{F_\alpha}\int_{\lambda\delta}^{\infty}{\ee^{-t}t^{\alpha}\ \dd t}\\
&\leq L\varepsilon+\|x\|_{F_\alpha}\int_{\lambda\delta}^{\infty}{\ee^{-t}t^{\alpha}\ \dd t}
\end{align*}
where $L:=\int_0^{\infty}{\ee^{-\lambda s}s^{\alpha}\ \dd s}<\infty$. Notice that the last part of the sum tends to zero if $\lambda\to\infty$ since we fixed $\delta>0$ in the beginning. So that $\tlim_{\lambda\to\infty} \lambda^{\alpha}AR(\lambda,A)x=0$.

\medskip\noindent For the converse inclusion suppose that $\tlim_{\lambda\to\infty}{\lambda^{\alpha}AR(\lambda,A)x}=0$ and $\sup_{\lambda>0}{\|\lambda^{\alpha}AR(\lambda,A)x\|}<\infty$, the latter immediately implying $\|x\|_{F_\alpha}<\infty$ (see Proposition 
\ref{prop:FavCont}). We have to show now that $\tlim_{t\to0}{\frac{T(t)x-x}{t^{\alpha}}}=0$. For $\lambda>0$ define $x_{\lambda}=\lambda R(\lambda,A)$ and $y_{\lambda}=AR(\lambda,A)$, then we have $$x=\lambda R(\lambda,A)x-AR(\lambda,A)x=x_{\lambda}-y_{\lambda}.$$
Let $\varepsilon>0$ be arbitrary. First notice that for $p\in\semis$
\begin{equation}\label{eq:xl}
\frac{p(T(t)x_{\lambda}-x_{\lambda})}{t^{\alpha}}\leq\frac{1}{t^{\alpha}}p(T(t)\lambda R(\lambda,A)x-\lambda R(\lambda,A)x)\leq\frac{\lambda^{1-\alpha}}{t^{\alpha}}\int_0^t{p(T(s)\lambda^{\alpha}AR(\lambda,A)x)\ \dd s}.
\end{equation}
By assumption the term $\lambda^{\alpha}AR(\lambda,A)x$ is norm-bounded and converges in the topology $\tau$ to zero as $\lambda\to\infty$, hence by the local bi-equicontinuity we conclude that $p(T(s)\lambda^{\alpha}AR(\lambda,A)x)\to0$ uniformly for $s\in [0,1]$. Now let $\lambda_0>1$ so large that for $\lambda>\lambda_0$ and $s\in [0,1]$ we have $p(T(s)\lambda^{\alpha}AR(\lambda,A)x)<\varepsilon$. If $t<\frac{1}{\lambda_0}$, then $\lambda:=\frac{1}{t}>\lambda_0$ and we obtain, that the expression in \eqref{eq:xl} becomes smaller than $\varepsilon$.

For the estimate of the second part we observe:
\begin{equation*}
\frac{p(T(t)y_{\lambda}-y_{\lambda})}{t^{\alpha}}\leq\frac{1}{(t\lambda)^{\alpha}}p(T(t)\lambda^{\alpha}AR(\lambda,A)x)+\frac{1}{(t\lambda)^{\alpha}}p(\lambda^{\alpha}AR(\lambda,A)x).
\end{equation*}
Now by taking $t<\frac1{\lambda_0}$ and $\lambda:=\frac{1}{t}$ we obtain the estimate:
\begin{equation}\label{eq:yl}
\frac{p(T(t)y_{\lambda}-y_{\lambda})}{t^{\alpha}}\leq p(T(\tfrac{1}{\lambda})\lambda^{\alpha}AR(\lambda,A)x)+p(\lambda^{\alpha}AR(\lambda,A)x)<\veps+\veps,
\end{equation}
by the choice of $\lambda_0$.
Altogether we obtain for $t<\frac1{\lambda_0}$ that $
\frac{p(T(t)x-x)}{t^{\alpha}}<3\veps$, showing
\[
\tlim_{t\to0}\frac{T(t)x-x}{t^{\alpha}}=0,
\]
i.e., $x\in X_\alpha$ as required.
\end{proof}

\begin{remark}
We remark that it is possible to define the space $X_{\alpha}(A)$ as the right-hand side of \eqref{eq:Xalphadef} for operators $A$ which are not necessarily generators of bi-continuous semigroups but whose resolvent fulfills certain continuity assumptions with respect to a topology satisfying, say, Assumption \ref{asp:bicontspace}.
\end{remark}

Again, we put our spaces $X_{\alpha}$ in the general context of Theorem \ref{thm:iden}. 

\begin{proposition}\label{cor:ExtFav2}
For $\alpha\in\left(0,1\right)$ and $\mathcal{A}$, $\lambda$ and $\D$ as in Theorem \ref{thm:iden} we have
\begin{align*}
X_{-\alpha}&=\Bigl\{(\lambda-\mathcal{A})y\in X_{-1}:\ \sup_{t>0}{\frac{\left\|T(t)y-y\right\|}{t^{1-\alpha}}}<\infty,\ \tlim_{t\to0}{\frac{T(t)y-y}{t^{1-\alpha}}}=0\Bigl\}.
\end{align*}
\end{proposition}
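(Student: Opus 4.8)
The plan is to reduce the statement to Definition~\ref{def:biHoelder} applied to the \emph{extrapolated} bi-continuous semigroup $(T_{-1}(t))_{t\geq0}$ on $X_{-1}$, and then to transport the resulting conditions back to $X_0$ along the canonical isometric isomorphism $A_{-1}\colon X_0\to X_{-1}$, which by Theorem~\ref{thm:iden} is precisely the map $y\mapsto(\lambda-\mathcal{A})y$.

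First I would recall that, by the definition of the H\"older scale for negative indices, $X_{-\alpha}(A)=X_{1-\alpha}(A_{-1})$ for $\alpha\in(0,1)$, and that by Proposition~\ref{prop:T-1bicont} the semigroup $(T_{-1}(t))_{t\geq0}$ is bi-continuous on $(X_{-1},\|\cdot\|_{-1},\tau_{-1})$ with generator $A_{-1}$; it has negative growth bound because it is similar to $(T(t))_{t\geq0}$. Hence Definition~\ref{def:biHoelder}, read on this semigroup, gives
\[
X_{1-\alpha}(A_{-1})=\Bigl\{z\in X_{-1}:\ \tau_{-1}\text{-}\lim_{t\to0}\tfrac{T_{-1}(t)z-z}{t^{1-\alpha}}=0,\ \sup_{t>0}\tfrac{\|T_{-1}(t)z-z\|_{-1}}{t^{1-\alpha}}<\infty\Bigr\}.
\]
Next I would invoke Definition~\ref{def:bicontext} and Proposition~\ref{prop:Edef}(c) with $B=A_{-1}$: the isometric isomorphism $A_{-1}\colon X_0\to X_{-1}$ satisfies $A_{-1}T(t)=T_{-1}(t)A_{-1}$, $\|A_{-1}y\|_{-1}=\|y\|$, and $p_{-1}(A_{-1}y)=p(y)$ for every $y\in X_0$ and every $p\in\semis$. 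Consequently, writing $z=A_{-1}y$, for each $t>0$ one has $\tfrac{T_{-1}(t)z-z}{t^{1-\alpha}}=A_{-1}\bigl(\tfrac{T(t)y-y}{t^{1-\alpha}}\bigr)$, whence
\[
\sup_{t>0}\tfrac{\|T_{-1}(t)z-z\|_{-1}}{t^{1-\alpha}}=\sup_{t>0}\tfrac{\|T(t)y-y\|}{t^{1-\alpha}}
\quad\text{and}\quad
\sup_{t>0}\tfrac{p_{-1}(T_{-1}(t)z-z)}{t^{1-\alpha}}=\sup_{t>0}\tfrac{p(T(t)y-y)}{t^{1-\alpha}}
\]
for every $p\in\semis$. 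Therefore $z=A_{-1}y\in X_{1-\alpha}(A_{-1})$ if and only if $\sup_{t>0}t^{-(1-\alpha)}\|T(t)y-y\|<\infty$ and $\tlim_{t\to0}t^{-(1-\alpha)}(T(t)y-y)=0$, i.e.\ exactly when $y$ lies in the bi-continuous H\"older space $X_{1-\alpha}$ of $(T(t))_{t\geq0}$ on $X_0$.

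Finally, since $A_{-1}$ maps $X_0$ onto $X_{-1}$ and $X_{-\alpha}(A)=X_{1-\alpha}(A_{-1})$, the previous equivalence shows $X_{-\alpha}=\{A_{-1}y:\ y\in X_0,\ \sup_{t>0}t^{-(1-\alpha)}\|T(t)y-y\|<\infty,\ \tlim_{t\to0}t^{-(1-\alpha)}(T(t)y-y)=0\}$, and by Theorem~\ref{thm:iden} (after the usual reduction to $\lambda=0$, or directly) we have $A_{-1}y=(\lambda-\mathcal{A})y$, which is the asserted formula. I do not expect a genuine obstacle here: the only point needing care is keeping the identification $A_{-1}y=(\lambda-\mathcal{A})y$ straight together with the compatibility of norm, topology and semigroup on $X_{-1}$ with those on $X_0$ under $A_{-1}$. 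Once Proposition~\ref{prop:Edef} and Theorem~\ref{thm:iden} are available, the argument is purely a bookkeeping matter and runs completely parallel to that of Proposition~\ref{cor:ExtFav}.
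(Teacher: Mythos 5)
Your proposal is correct and follows exactly the route the paper intends (the paper states this proposition without proof, as a direct consequence of the definition $X_{-\alpha}=X_{1-\alpha}(A_{-1})$, Proposition \ref{prop:Edef}, Definition \ref{def:bicontext} and Theorem \ref{thm:iden}): the identities $T_{-1}(t)A_{-1}=A_{-1}T(t)$, $\|A_{-1}y\|_{-1}=\|y\|$ and $p_{-1}(A_{-1}y)=p(y)$ transport the two defining conditions of $X_{1-\alpha}(A_{-1})$ back to $X_0$, and $A_{-1}y$ is identified with $(\lambda-\mathcal{A})y$ under the embedding $i_{-1}$. The only points needing the care you already flag are that $(T_{-1}(t))_{t\geq0}$ inherits the negative growth bound through the isometric similarity, so that Definition \ref{def:biHoelder} applies on $(X_{-1},\|\cdot\|_{-1},\tau_{-1})$.
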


Finally, we extend the scale of spaces $X_{\alpha}$  to the whole range $\alpha\in\RR$.

\begin{definition}
For $\alpha\in\RR\setminus\ZZ$ we write $\alpha=m+\beta$ with $m\in\ZZ$ and $\beta\in (0,1]$, and define
\[
X_{\alpha}(A):=X_{\beta}(A_m),
\]
also with the corresponding norms. The locally convex topology on $X_{\alpha}$ comes from $X_{\beta}$ via the mapping $A_m$.
\end{definition}

\begin{remark}
We summarize all previous results in the following diagram:
\footnotesize{
\begin{align*}
\xymatrix{ 
X_1\ar@/^2pc/[rrrrr]^{A}\ar[r]\ar[d]&\xX_{\alpha}\ar[r]\ar[d]&X_{\alpha}\ar[r]\ar[d]\ar@/^2pc/[rrrrr]^{A_{\alpha-1}}&F_{\alpha}\ar[r]\ar[d]&\xX_0\ar[r]\ar[d]&X_0\ar[r]\ar[d]&\xX_{\alpha-1}\ar[r]\ar[d]&X_{\alpha-1}\ar[r]\ar[d]&F_{\alpha-1}\ar[r]\ar[d]&X_{-1}\ar[d]\\
X_1\ar[r]&\xX_{\alpha}\ar[r]\ar@/_2pc/[rrrrr]_{\aA_{\alpha-1}}&X_{\alpha}\ar[r]&F_{\alpha}\ar@/_2pc/[rrrrr]_{{A_{-1}}_{|F_{\alpha}}}\ar[r]&\xX_0\ar[r]&X_0\ar[r]\ar@/_2pc/[rrrr]_{A_{-1}}&\xX_{\alpha-1}\ar[r]&X_{\alpha-1}\ar[r]&F_{\alpha-1}\ar[r]&X_{-1}
}
\end{align*}
}\\\normalsize
where $\alpha\in\left(0,1\right)$. Here $A_{\alpha-1}$ and $\aA_{\alpha-1}$ are defined to be the part of $A_{-1}$ in $X_{\alpha-1}$ and the part of $\aA_{-1}$ in $\xX_{\alpha-1}$, respectively and are all continuous with respect to the norms and topologies on the spaces. In addition we recall that $X_{\alpha-1}$ and $\xX_{\alpha-1}$ are the extrapolation spaces of $X_{\alpha}(A_{-1})$ and $\xX_{\alpha}(A_{-1})$, respectively. This shows that we can extend the space $X_{\alpha}$ from $\alpha\in\left(0,1\right)$ to $\alpha\in\RR$ by extra- and interpolation. All horizontal arrows are inclusions which are all continuous, whereas the vertical arrows are the action(s) of the semigroup(s). All spaces are dense in the underlined ones that contain them, while the spaces without underlining are bi-dense in each of the bigger ones. 
\end{remark}

\section{Examples}\label{sec:examp}
In this section we present examples for extrapolation and intermediate spaces for (generators of) bi-continuous semigroups. 
We will use Theorem \ref{thm:iden} and its variants to identify the space $X_{\alpha}$ with $\alpha<0$.

\subsection{The translation semigroup}
Let $X_0=\BC(\RR)$, the space of bounded and continuous functions equipped with the supremum norm $\|\cdot\|_{\infty}$ and consider thereon the compact-open topology $\tau_{\mathrm{co}}$ generated by the family of seminorms $\semis=\{p_K:\ K\subseteq\RR\ \text{compact}\}$, where 
\[
p_K(f)=\sup_{x\in K}{|f(x)|},\quad f\in\BC(\RR).
\]
The left translation semigroup $(T(t))_{t\geq0}$ defined by
\[
T(t)f(x)=f(x+t),\quad t\geq0
\]
is bi-continuous on $X_0$ with respect to $\tau_{co}$. The generator $A$ of this semigroup is given by the first derivative $Af=f'$ on the domain (see \cite{Ku})
\[
\dom(A)=\{f\in\BC(\RR):\ f\quad \text{is differentiable}\quad f'\in\BC(\RR)\}.
\]
The space of strong continuity is $\xX_0=\BUC(\RR)$, the space of all bounded, uniformly continuous functions. We use Theorem \ref{thm:iden} to determine the corresponding extrapolation spaces. To this purpose let $\D=\Distr(\RR)$ be the space of all distributions on $\RR$, let $\A:\D'(\RR)\to\D'(\RR)$ be the distributional derivative and let $i:\BC(\RR)\to\Distr(\RR)$ be the regular embedding. From Theorem \ref{thm:iden} it then follows 
\begin{align*}
X_{-1}&=\{F\in\Distr(\RR):\ F=f-Df\ \text{for some}\ f\in \BUC(\RR)\},\\
X_{-1}&=\{F\in\Distr(\RR):\ F=f-Df\ \text{for some}\ f\in \BC(\RR)\}.
\end{align*}
For the Favard and H\"older spaces we have
\begin{align*}
F_{\alpha}&=\Bigl\{f\in\BC(\RR):\ \sup_{\substack{x,y\in\RR\\x\neq y}}{\frac{|f(x)-f(y)|}{|x-y|^{\alpha}}}<\infty\Bigl\}=\BC^{\alpha}(\RR),\\
\xX_{\alpha}&=\Bigl\{f\in\BUC(\RR):\ \lim_{t\to0}{\sup_{\substack{x,y\in\RR\\0<|x-y|<t}}{\frac{|f(x)-f(y)|}{|x-y|^{\alpha}}}}=0\Bigl\}=\h_b^{\alpha}(\RR).
\end{align*}
Hence $F_{\alpha}$ can be identified with the space of bounded $\alpha$-H\"older-continuous functions and $\xX_{\alpha}$ with the so-called little H\"older space $\h_b^{\alpha}(\RR)$. For the abstract H\"older space $X_{\alpha}$ corresponding to the bi-continuous semigroup we obtain the local version $\h_{b,\text{loc}}^{\alpha}(\RR)$ of the little H\"older space:
\begin{align*}
\h_{b,\text{loc}}^{\alpha}&=\Bigl\{f\in\BC^{\alpha}(\RR):\ \lim_{t\to0}{\sup_{\substack{x,y\in K\\0<|x-y|<t}}{\frac{|f(x)-f(y)|}{|x-y|^{\alpha}}}}=0\ \text{for each}\ K\subseteq\RR\ \text{compact} \Bigr\}.
\end{align*}
Then $X_{\alpha}=h_{b,\text{loc}}^{\alpha}(\RR)$. It is easy to see $\xX_{\alpha}\subsetneq X_{\alpha}\subsetneq F_{\alpha}$. The extrapolated Favard class $F_0$ can be identified with $\Ell^{\infty}(\RR)$. We know from the general theory that $F_0(T)=(1-D)F_1(T)$ where $F_1(T)$ are precisely the bounded Lipschitz functions on $\RR$. Now using the fact that $\Lipb(\RR)=\W^{1,\infty}(\RR)$ with equivalent norms we obtain the result. For an alternative proof we refer to \cite[Chapter~II.5(b)]{EN}. Moreover, we obtain
\[
F_{-\alpha}=\Bigl\{f\in\Distr(\RR):\quad F=f-Df\ \text{for}\ f\in\BC^{1-\alpha}(\RR)\Bigl\},
\]
and
\[
X_{-\alpha}=\Bigl\{f\in\Distr(\RR):\quad F=f-Df\ \text{for}\ f\in\h_{b,\text{loc}}^{1-\alpha}(\RR)\Bigl\},
\]
which follow from Corollary \ref{cor:ExtFav}.

We summarize this example by the following diagram:
\[
\BC^1(\RR)\hookrightarrow\Lipb(\RR)\hookrightarrow\h_b^{\alpha}(\RR)\hookrightarrow\h_{b,\text{loc}}^{\alpha}(\RR)\hookrightarrow\BC^{\alpha}(\RR)\hookrightarrow\BUC(\RR)\hookrightarrow\BC(\RR)\hookrightarrow\Ell^{\infty}(\RR)
\]
according to the following abstract chain of spaces
\[
X_1\hookrightarrow F_1\hookrightarrow\xX_{\alpha}\hookrightarrow X_{\alpha}\hookrightarrow F_{\alpha}\hookrightarrow \xX_0\hookrightarrow X_0\hookrightarrow F_0
\]
for $\alpha\in(0,1)$. For the higher order spaces we have:
\begin{align*}
X_n:=\dom(A^n)&=\Bigl\{f\in\BC(\RR):\ f\ \text{is}\ n\text{-times differentiable and}\ f^{(n)}\in\BC(\RR)\Bigr\}\\
&=\Bigl\{f\in\BC(\RR):\ f^{(k)}\in\BC(\RR),\ k=1,\dots,n\Bigr\}=\BC^n(\RR)
\end{align*}
for $n\in\NN$. We denote by $F_{n+\alpha}$ the Favard space which belongs to the restricted semigroup on $X_n$. 
\begin{align*}
F_{n+\alpha}
=\Bigl\{f\in\BC^n(\RR):\ \sup_{\substack{x,y\in\RR\\x\neq y}}{\frac{|f^{(n)}(x)-f^{(n)}(y)|}{|x-y|^{\alpha}}}<\infty\Bigr\}=\BC^{n,\alpha}(\RR)
\end{align*}
This example extends Nagel, Nickel, Romanelli \cite[Sec.{} 3.2]{NagelIdent}.

\subsection{The multiplication semigroup}

Let $\Omega$ the a locally compact space and $X_0=\BC(\Omega)$. Let $q:\Omega\to\CC$ be continuous such that $\sup_{x\in\Omega}{\text{Re}(q(x))}<0$. We define the multiplication operator $M_q:\dom(M_q)\to\BC(\Omega)$ by $M_qf=qf$ on the maximal domain
\[
\dom(M_q)=\{f\in \BC(\Omega):\ qf\in\BC(\Omega)\}.
\] 
This operator generates the semigroup $(T_q(t))_{t\geq0}$ defined by
\[
(T_q(t)f)(x)=\ee^{tq(x)}f(x),\quad t\geq0, x\in\Omega, f\in\BC(\Omega),
\]
which is bi-continuous on $\BC(\Omega)$ with respect to the compact-open topology. Now let $\D=\C(\Omega)$ the space of all continuous functions on $\Omega$, let $\mathcal{M}_q:\C(\Omega)\to\C(\Omega)$ be the multplication operator $\mathcal{M}_qf:=qf$ and $i:\BC(\Omega)\to\C(\Omega)$ the identity. Then by Theorem \ref{thm:iden} we obtain
\[
X_{-1}=\{g\in\C(\mathbb{R}):\ q^{-1}g\in\BC(\RR)\}.
\]
For $\alpha\in(0,1)$, the (abstract) Favard space:
\[
F_{\alpha}=\{f\in\BC(\RR):\ |q|^{\alpha}f\in\BC(\RR)\}.
\]
To see this suppose first that $f\in F_{\alpha}$, hence $\|f\|_{\alpha}<\infty$ which means in particular that
\[
\sup_{t>0}\sup_{x\in\Omega}{\frac{|\ee^{tq(x)}f(x)-f(x)|}{t^{\alpha}}}<\infty.
\]
By specializing $t=\frac{1}{|q(x)|}$ we obtain
\[
{\bigl|\ee^{\frac{q(x)}{|q(x)|}}-1\bigr|\cdot|f(x)|\cdot|q(x)|^{\alpha}},
\]
since
\begin{align}\label{eqn:expf}
\frac{|\ee^{tq(x)}f(x)-f(x)|}{t^{\alpha}}=\frac{|\ee^{tq(x)}-1|\cdot|f(x)||q(x)|^{\alpha}}{|q(x)|^{\alpha}t^{\alpha}}.
\end{align}
Hence $|q|^{\alpha}f\in\BC(\RR)$. For the converse assume that $|q|^{\alpha}f\in\BC(\RR)$. Since the function $g(z)=\frac{|\ee^z-1|}{|z|^{\alpha}}$ is bounded on the left half plane we obtain that $f\in F_{\alpha}$ by \eqref{eqn:expf}. This proves the equality. We also conclude that $F_{\alpha}=X_{\alpha}$ since
\[
\sup_{x\in K}{\left|\frac{\ee^{tq(x)}f(x)-f(x)}{t^{\alpha}}\right|}=\sup_{x\in K}{\left|\frac{\ee^{tq(x)}-1}{tq(x)}\right|\cdot\left|f(x)\right|\cdot\left|q(x)\right|^{\alpha}t^{1-\alpha}}
\]
for each compact set $K\subseteq\Omega$. The extrapolated Favard spaces are then given by
\[
F_{-\alpha}=\bigl\{f\in\BC(\Omega):\ |q|^{1-\alpha} f\in\BC(\Omega)\bigr\}=X_{-\alpha}.
\]
The spaces $\xX_{\alpha}$ are more difficult to describe in general, since the space of strong continuity $\xX_0$ depends substantially on the choice of $q$. For example, if $\frac{1}{q}\in\C_0(\Omega)$, then $\xX_0=\C_0(\RR)$. To see this first notice that $\C_0(\Omega)\subseteq\xX_0$ trivially. On the other hand 
\[
\left|f\right|=\left|\frac{1}{q}\right|\cdot\left|fq\right|
\]
which shows that $\dom(M_q)\subseteq\C_0(\RR)$ and hence that $\xX_0\subseteq\C_0(\RR)$. Now one obtains
\[
\xX_{\alpha}=\{f\in\C_0(\Omega): |q|^{\alpha}f\in\C_0(\Omega)\},
\]
and
\[
\xX_{-\alpha}=\{f\in\C_0(\Omega):\ |q|^{1-\alpha}f\in\C_0(\Omega)\}.
\]
This example extends Nagel, Nickel, Romanelli \cite[Sec.{} 3.2]{NagelIdent}.

\subsection{The Gau\ss{}-Weierstra\ss{} semigroup}
On $X_0=\BC(\RR^d)$ ($d\geq1$) we consider the Gauss-Weierstrass semigroup, defined by $T(0)=\Id$ and
\begin{align}\label{eqn:Gauss}
T(t)f(x)=\frac{1}{(4\pi t)^{\frac{d}{2}}}\int_{\RR^d}{\ee^{-\frac{\left|x-y\right|^2}{4t}}f(y)\ \dd y},\quad t>0,\quad x\in\RR^d,
\end{align}

If one equips $\BC(\RR^d)$ again with the compact-open topology one concludes that $(T(t))_{t\geq0}$ defined by \eqref{eqn:Gauss} is bi-continuous and its space of strong continuity is $\BUC(\RR^d)$. From \cite[Proposition~2.3.6]{LB} we know that the generator $A$ of this semigroup is given $Af=\Delta f$ on the maximal domain
\[
\dom(A)=\{f\in\BC(\RR^d):\quad \Delta f\in\BC(\RR^d)\},
\]
where $\Delta$ is the distributional Laplacian. Now the extrapolation space can again be obtained by Theorem \ref{thm:iden}. Take $\D$ again to be the space of all distributions $\Distr(\RR^d)$ on $\RR^d$ with $\A$ the distributional Laplacian and $i:\BC(\RR^d)\to\Distr(\RR^d)$ the regular embedding. Applying Theorem \ref{thm:iden} we obtain
\[
X_{-1}=\{F\in\Distr(\RR^d):\ F=f-\Delta f\ \text{for some}\ f\in\BC(\RR^d)\}.
\]
The domain of the generator can explicitly be written down, see, e.g., \cite{LB} or\cite{Lunardi}. For $d=1$ the domain is given by
\[
\dom(\Delta)=\BC^2(\RR),
\]
while for $d\geq2$ 
\[
\dom(\Delta)=\Bigl\{f\in\BC(\RR^d)\cap\W^{2,p}_{\text{loc}}(\RR^d),\ \text{for all}\ p\in[1,\infty),\ \text{and}\ \Delta f\in\BC(\RR^d)\Bigl\}.
\]
For $\alpha\in(0,1)\setminus\{\frac{1}{2}\}$ the Favard spaces are 
\[
F_{\alpha}=\BC^{2\alpha}(\RR^d)
\]
while for $\alpha=\frac{1}{2}$ one obtains
\[
F_{\frac{1}{2}}=\Bigl\{f\in\BC(\RR^d):\ \sup_{x\neq y}{\frac{|f(x)+f(y)-2f(\frac{x+y}{2})|}{|x-y|}}<\infty\Bigl\}.
\]
From Corollary \ref{cor:ExtFav} it follows that
\[
F_{-\alpha}=\Bigl\{F\in\Distr(\RR^d):\ F=f-\Delta f\ \text{for some}\ f\in\BC^{2(1-\alpha)}(\RR^d)\Bigr\}
\]
and
\[
F_{-\frac{1}{2}}=\Bigl\{F\in\Distr(\RR^d):\ F=f-\Delta f\ \text{for some}\ f\in F_{\frac{1}{2}}\Bigr\}.
\]

\subsection{The left implemented semigroup} \label{subs:implem}
Let $X_0:=\LLL(E)$ the space of bounded linear operator on a Banach space $E$. We equip $\LLL(E)$ with the operator norm and the strong topology $\tau_{\text{stop}}$ generated by the family of seminorms $\semis=\{p_x:\ x\in E\}$, where
\[
p_x(B)=\|Bx\|,\quad B\in\LLL(E).
\]
Let $(S(t))_{t\geq0}$ be a $C_0$-semigroups with negative growth bound on a Banach space $E$. The semigroup $(\mathcal{U}(t))_{t\geq0}$ on $X_0$ defined by 
\[
\mathcal{U}(t)B=S(t)B,\quad B\in X_0,\quad t\geq0,
\]
is called the semigroup left implemented by $(S(t))_{t\geq0}$. Note that $(\mathcal{U}(t))_{t\geq0}$ has negative growth bound and is a bi-continuous semigroup if $(S(t))_{t\geq0}$ is a $C_0$-semigroup.
We determine the intermediate and extrapolation spaces for this semigroup. We can write:
\begin{align*}
\|B\|_{F_{\alpha}(\mathcal{U})}&=\sup_{t>0}{\frac{\|\mathcal{U}(t)B-B\|}{t^{\alpha}}}=\sup_{t>0}{\frac{\|S(t)B-B\|}{t^{\alpha}}}\\
&=\sup_{t>0}{\sup_{\|x\|\leq1}{\frac{\|S(t)Bx-Bx\|}{t^{\alpha}}}}
=\sup_{\|x\|\leq1}{\sup_{t>0}{\frac{\|S(t)Bx-Bx\|}{t^{\alpha}}}}
=\sup_{\|x\|\leq1}{\|Bx\|_{F_{\alpha}(S)}}.
\end{align*}
From this we conclude the following.
\begin{proposition}\label{prop:implFav}
Let $(\mathcal{U}_L(t))_{t\geq0}$ be the semigroup which is left-implemented by $(S(t))_{t\geq0}$. Then \[F_{\alpha}(\mathcal{U})=\LLL(E,F_{\alpha}(S))\] with the same norms.\end{proposition}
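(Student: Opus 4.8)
The plan is to read the statement straight off the chain of identities displayed immediately before it; essentially no new computation is needed, and the only point requiring comment is the passage from finiteness of a supremum to membership in $\LLL(E,F_\alpha(S))$. So I would fix $B\in X_0=\LLL(E)$ and note that, since $(\mathcal U(t))_{t\geq0}$ has negative growth bound (indeed $\|\mathcal U(t)\|=\|S(t)\|$, as one sees by testing on $B=\Id$), Proposition \ref{prop:FavCont} applies and $B\in F_\alpha(\mathcal U)$ holds exactly when $\|B\|_{F_\alpha(\mathcal U)}=\sup_{t>0}t^{-\alpha}\|\mathcal U(t)B-B\|<\infty$. The computation preceding the proposition --- which only uses $\mathcal U(t)B=S(t)B$, the definition of the operator norm, and the trivial interchange $\sup_{t>0}\sup_{\|x\|\leq1}=\sup_{\|x\|\leq1}\sup_{t>0}$ of suprema of nonnegative quantities --- gives, as an identity in $[0,\infty]$,
\[
\|B\|_{F_\alpha(\mathcal U)}=\sup_{\|x\|\leq1}\Bigl(\sup_{t>0}\frac{\|S(t)Bx-Bx\|}{t^\alpha}\Bigr)=\sup_{\|x\|\leq1}\|Bx\|_{F_\alpha(S)}.
\]

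Next I would translate the right-hand side. If it is finite, say equal to $c$, then homogeneity of $\|\cdot\|_{F_\alpha(S)}$ forces $\|Bx\|_{F_\alpha(S)}\leq c\|x\|$ for every $x\in E$; in particular $Bx\in F_\alpha(S)$ for all $x$, so $B$ maps $E$ boundedly into $F_\alpha(S)$, that is, $B\in\LLL(E,F_\alpha(S))$ with operator norm $\sup_{\|x\|\leq1}\|Bx\|_{F_\alpha(S)}$; conversely every $B\in\LLL(E,F_\alpha(S))$ satisfies $\sup_{\|x\|\leq1}\|Bx\|_{F_\alpha(S)}=\|B\|_{\LLL(E,F_\alpha(S))}<\infty$. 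Both sides are legitimately subspaces of $X_0=\LLL(E)$, the right-hand one because $F_\alpha(S)\hookrightarrow\xX_0=E$ continuously. Combining this with the displayed identity yields simultaneously $F_\alpha(\mathcal U)=\LLL(E,F_\alpha(S))$ and $\|B\|_{F_\alpha(\mathcal U)}=\|B\|_{\LLL(E,F_\alpha(S))}$ for every such $B$, which is the assertion.

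There is no real obstacle here: the entire content is the interchange of suprema, already recorded in the excerpt. The only things to watch are that one obtains equality of the norms rather than mere equivalence --- immediate from the chain of equalities --- and that finiteness of the supremum over the unit ball really does place the range of $B$ inside $F_\alpha(S)$, which is where homogeneity of the Favard norm (or, if one prefers, the closed graph theorem together with the continuous embedding $F_\alpha(S)\hookrightarrow E$) is used.
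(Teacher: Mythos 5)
Your argument is correct and is essentially the paper's own proof: the paper simply records the chain of equalities $\|B\|_{F_\alpha(\mathcal U)}=\sup_{\|x\|\leq1}\|Bx\|_{F_\alpha(S)}$ obtained by interchanging the two suprema and concludes the proposition from it. The extra details you supply (negative growth bound of $(\mathcal U(t))_{t\geq0}$, homogeneity giving $B\in\LLL(E,F_\alpha(S))$ from finiteness of the supremum) are exactly the steps the paper leaves implicit.
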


From the definition we obtain:
\begin{align*}
X_{\alpha}(\mathcal{U})&=\Bigl\{B\in\LLL(E):\ \tlim_{t\to0}{\frac{\mathcal{U}_L(t)B-B}{t^{\alpha}}}=0,\ \|B\|_{F_{\alpha}(\mathcal{U})}<\infty\Bigl\}\\
&=\Bigl\{B\in\LLL(E):\ \lim_{t\to0}{\frac{\|S(t)Bx-Bx\|}{t^{\alpha}}}=0\quad \text{for all } x\in E\Bigl\},\\
\xX_{\alpha}(\mathcal{U})&=\Bigl\{B\in \LLL(E):\ \lim_{t\to0}{\frac{S(t)B-B}{t^{\alpha}}}=0\Bigl\}.
\end{align*}

\begin{proposition}Let $(\mathcal{U}(t))_{t\geq0}$ be the semigroup which is left-implemented by $(S(t))_{t\geq0}$. Then \[X_{\alpha}(\mathcal{U})=\LLL(E,X_{\alpha}(S))\] with the same norms.\end{proposition}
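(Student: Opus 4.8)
The plan is to reduce the statement to the already–established identity $F_{\alpha}(\mathcal{U})=\LLL(E,F_{\alpha}(S))$ of Proposition~\ref{prop:implFav}, combined with the explicit description of $X_{\alpha}(\mathcal{U})$ recorded just before the statement: for $B\in\LLL(E)$ one has $B\in X_{\alpha}(\mathcal{U})$ exactly when $\|B\|_{F_{\alpha}(\mathcal{U})}<\infty$ and $\lim_{t\to0}\|S(t)Bx-Bx\|/t^{\alpha}=0$ for every $x\in E$. The one conceptual point is that $X_{\alpha}(S)$ is, by definition, the closed subspace of $F_{\alpha}(S)$ consisting of those $y$ with $\lim_{t\to0}\|S(t)y-y\|/t^{\alpha}=0$, carrying the norm inherited from $F_{\alpha}(S)$; hence the conjunction ``$Bx\in X_{\alpha}(S)$ for every $x\in E$'' and ``$B$ bounded from $E$ into $F_{\alpha}(S)$'' is literally the statement ``$B\in\LLL(E,X_{\alpha}(S))$'', with coinciding operator norms.

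Concretely, I would first fix $B\in\LLL(E)$ and use Proposition~\ref{prop:implFav} to rewrite $\|B\|_{F_{\alpha}(\mathcal{U})}<\infty$ as $B\in\LLL(E,F_{\alpha}(S))$, with $\|B\|_{F_{\alpha}(\mathcal{U})}=\|B\|_{\LLL(E,F_{\alpha}(S))}=\sup_{\|x\|\le1}\|Bx\|_{F_{\alpha}(S)}$ (this last equality is exactly the displayed computation preceding that proposition). Next, unwinding the seminorms $p_{x}(C)=\|Cx\|$ that define $\tau_{\text{stop}}$ on $\LLL(E)$, the condition $\tlim_{t\to0}(\mathcal{U}(t)B-B)/t^{\alpha}=0$ entering the definition of $X_{\alpha}(\mathcal{U})$ becomes precisely $\|S(t)Bx-Bx\|/t^{\alpha}\to0$ as $t\to0$ for each $x\in E$, so it may simply be cited from the displayed characterisation.

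Putting these together yields the chain of equivalences: $B\in X_{\alpha}(\mathcal{U})$ $\iff$ $B\in\LLL(E,F_{\alpha}(S))$ and $Bx\in X_{\alpha}(S)$ for all $x\in E$ $\iff$ $B\in\LLL(E,X_{\alpha}(S))$, the last step being immediate from the isometric inclusion $X_{\alpha}(S)\hookrightarrow F_{\alpha}(S)$. The norm identity follows by chaining $\|B\|_{X_{\alpha}(\mathcal{U})}=\|B\|_{F_{\alpha}(\mathcal{U})}$ (as $X_{\alpha}(\mathcal{U})$ is a closed subspace of $F_{\alpha}(\mathcal{U})$ with the inherited norm) with $\|B\|_{\LLL(E,F_{\alpha}(S))}=\|B\|_{\LLL(E,X_{\alpha}(S))}$ (as $X_{\alpha}(S)$ is a closed subspace of $F_{\alpha}(S)$ with the inherited norm).

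I do not expect a genuine obstacle here: the only nontrivial interchange of suprema, over $t>0$ versus over $\|x\|\le1$, was already carried out in the proof of Proposition~\ref{prop:implFav}. The single thing to be careful about is not to strengthen the pointwise ``little-$o$'' condition $\lim_{t\to0}\|S(t)Bx-Bx\|/t^{\alpha}=0$ (required separately for each $x$) into anything uniform in $x$; uniformity in $x$ is needed only for the \emph{boundedness} part of membership in the Banach space $\LLL(E,X_{\alpha}(S))$, that is, for the supremum over $t>0$, and that part is exactly the finiteness of $\|B\|_{F_{\alpha}(\mathcal{U})}$.
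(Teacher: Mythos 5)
Your argument is correct and is essentially the paper's own: the paper gives no separate proof, relying precisely on the displayed computation $\|B\|_{F_{\alpha}(\mathcal{U})}=\sup_{\|x\|\leq1}\|Bx\|_{F_{\alpha}(S)}$ together with the unwinding of the $\tau_{\text{stop}}$-limit into the pointwise conditions $Bx\in X_{\alpha}(S)$, which is exactly your chain of equivalences. Your care in keeping the uniform boundedness condition separate from the pointwise little-$o$ condition is sound and matches the intended reading.
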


After the discussion of abstract Favard and H\"older spaces of the implemented semigroup, we turn to extrapolation spaces. Theses spaces have been studied by Alber in \cite{Alber2001} but only for the $C_0$-semigroup $(\underline{\mathcal{U}}(t))_{t\geq 0}$ on the space $\xX_0=\overline{\dom(\mathcal{G})}$ which depends on the semigroup $(\underline{\mathcal{U}}(t))_{t\geq 0}$. First we recall a result from \cite{Alber2001}: The generator $\mathcal{G}$ of $(\mathcal{U}(t))_{t\geq0}$ is given by
\[
\mathcal{G}V=A_{-1}V,
\]
on
\[
\dom(\mathcal{G})=\left\{V\in\LLL(E):\quad A_{-1}V\in\LLL(E)\right\},
\]
where $A_{-1}$ denotes the generator of the extrapolated $C_0$-semigroup $(S_{-1}(t))_{t\geq 0}$ on $E_{-1}$.
The extrapolation spaces $X_{-1}$ and $\xX_{-1}$ can now we obtained by Theorem \ref{thm:iden}. For that let
\[
\D=\bigl\{S:E\to E_{-\infty}:\ \text{linear and continuous}\bigr\},
\]
where $E_{-\infty}$ is the universal extrapolation space of $(S(t))_{t\geq 0}$ (see the paragraph preceding Theorem \ref{thm:iden}),
and let $i:\LLL(E)\to\D$ be the identity. Consider the operator-valued multiplication operator
\[
\mathcal{A}V=A_{-\infty}V,\quad V\in\D
\]
where $A_{-\infty}x=A_{-(n-1)}x$ for $x\in E_{-n}$. 
Notice that $\lambda-\mathcal{A}:X_0\to\D$ is injective for $\lambda>0$ since $A_{-\infty}$ and $A_{-1}$ coincide on $E$. Hence by applying Theorem \ref{thm:iden} we obtain
\[
X_{-1}=\left\{A_{-1}V:\quad V\in\LLL(E)\right\} 
\]
and
\[
\xX_{-1}=\left\{A_{-1}V:\quad V\in\xX_0\right\}. 
\]
From this we conclude the following description for $X_{-1}$:
\[
X_{-1}=\Bigl\{V\in\LLL(E,E_{-1}):\ \exists(V_n)_{n\in\NN}\subseteq\LLL(E)\text{ with } V_n\to V\text{ strongly}\Bigr\}=\overline{\LLL(E)}^{\LLL_{\mathrm{stop}}(E,E_{-1})}.
\]
And similarly for $\xX_{-1}$:
\begin{align*}
\xX_{-1}&=\Bigl\{V\in\LLL(E,E_{-1}):\ \exists(V_n)_{n\in\NN}\subseteq\LLL(E)\text{ with } V_n\to V\text{ in $\LLL(E,E_{-1})$}\Bigr\}=\overline{\LLL(E)}^{\LLL(E,E_{-1})}.
\end{align*}
This is a result of Alber, see \cite{Alber2001}, which we could recover as a simple consequence of the abstract techniques described in this paper.
Finally, we obtain by Corollary \ref{cor:ExtFav} that for $\alpha\in (0,1)$
\[
F_{-\alpha}(\mathcal{U})=A_{-1}\LLL(E,F_{1-\alpha}(S)) \quad\text{and}\quad X_{-\alpha}(\mathcal{U})=A_{-1}\LLL(E,X_{1-\alpha}(S)).
\]



\providecommand{\bysame}{\leavevmode\hbox to3em{\hrulefill}\thinspace}
\providecommand{\MR}{\relax\ifhmode\unskip\space\fi MR }
\providecommand{\MRhref}[2]{%
  \href{http://www.ams.org/mathscinet-getitem?mr=#1}{#2}
}
\providecommand{\href}[2]{#2}

\parindent0pt
\end{document}